\newtheorem{theorem}{Theorem}[section]
\newtheorem{lemma}[theorem]{Lemma}
\newtheorem{definition}[theorem]{Definition}
\newtheorem{remark}[theorem]{Remark}
\numberwithin{equation}{section}
\DeclareMathOperator*{\esssup}{ess\,sup}
\newcommand{\supp}{\operatorname{supp}}
\def\lz{\lambda}
\def\tbz{{\triangle_\lz}}
\begin{document}
\allowdisplaybreaks

\title[Weighted product Hardy spaces associated to operators]{Equivalence of Littlewood--Paley
square function and area function characterizations of weighted product\\
Hardy spaces associated to operators}
\author{Xuan Thinh Duong}
\address{Department of Mathematics, Macquarie University, Sydney, NSW 2109, Australia}
\email{xuan.duong@mq.edu.au}
\author{Guorong Hu}
\address{Department of Mathematics, Jiangxi Normal University, Nanchang, Jiangxi 330022, P. R. China}
\email{hugr@mail.ustc.edu.cn}
\author{Ji Li}
\address{Department of Mathematics, Macquarie University, Sydney, NSW 2109, Australia}
\email{ji.li@mq.edu.au}

\subjclass[2010]{42B25, 42B30, 42B35, 47B25}

\keywords{Product Hardy spaces, non-negative self-adjoint operators,
heat semigroup, Littlewood-Paley and area functions,  space of homogeneous type}

\date{\today}

\begin{abstract}
Let $L_1$ and $L_2$ be non-negative self-adjoint operators acting on $L^2(X_1)$ and $L^2(X_2)$, respectively,
where $X_1$ and $X_2$ are spaces of homogeneous type. Assume that $L_1$ and $L_2$ have Gaussian heat kernel bounds.
This paper aims to study some equivalent characterizations of the weighted product Hardy spaces
$H^{p}_{w,L_{1},L_{2}}(X_{1}\times X_{2})$ associated to $L_{1}$ and $L_{2}$, for $p \in (0, \infty)$ and
the weight $w$ belongs to the product Muckenhoupt class $ A_{\infty}(X_{1} \times X_{2})$. Our main result is that the spaces $H^{p}_{w,L_{1},L_{2}}(X_{1}\times X_{2})$
introduced via area functions can be equivalently characterized by Littlewood--Paley $g$-functions,
Littlewood--Paley $g^{\ast}_{\lambda_{1}, \lambda_{2}}$-functions, and Peetre type maximal functions,
without any further assumptions beyond the Gaussian upper bounds on the heat kernels of $L_1$ and $L_2$.
Our results are new even in the unweighted product setting.
\end{abstract}

\maketitle

\section{Introduction}

The theory of Hardy spaces has been a successful story in modern harmonic analysis in the last fifty years.
In the classical case of the Euclidean space $\mathbb R^n$, it is well known that among other equivalent
characterizations the Hardy space $H^p(\mathbb R^n)$ are characterized by area functions, by Littlewood--Paley
$g$-functions and by atomic decomposition \cite{FS,St}. Concerning Hardy spaces $H^p(X)$  on a space of
homogeneous type $X$, a new approach to show the equivalence between characterizations of $H^p(X)$ by area functions
and $g$-functions is to use the Plancherel--Polya type inequality, which requires the H\"older continuity
and cancellation conditions \cite{DH}. About the more recent Hardy spaces $H^p_L(X)$ associated to an
operator $L$ on a space of homogeneous type $X$, one used to need extra assumptions to show that the
characterizations by area functions and by Littlewood-Paley $g$-functions are equivalent, for example,
H\"older continuity was assumed in  \cite{DLSY} and Moser type estimate in \cite{DLY}.
Only recently, the equivalence of the characterizations of $H_{L}^p(X)$ by area functions and by Littlewood--Paley
$g$-functions was obtained in  \cite{Hu} under no further assumption beyond the Gaussian heat kernel bounds.
Actually, the work in \cite{Hu} was done in the weighted setting.

The aim of the current paper is to prove the equivalence between the
characterizations of the weighted product Hardy spaces
$H^{p}_{w, L_{1},L_{2}}(X_{1}\times X_{2})$ in terms of the area funcions and
Littlewood--Paley square functions, see Theorems \ref{main1} and \ref{main2},
where we assume only that the operators $L_1$ and $L_2$ are non-negative self-adjoint and have
Gaussian upper bounds on their heat kernels.
This extends the main result in \cite{Hu} to the product setting.
The strength of our results is that not only they are new for the setting of product spaces
and covers larger classes of operators $L_1$ and $L_2$ but also
recover a number of known results whose proofs rely on extra regularity of the semigroups.
In particular, our Theorems \ref{main1} and \ref{main2}

(i) give a direct proof for the equivalent characterizations via Littlewood--Paley square functions of the classical product Hardy space
 by Chang--Fefferman  in \cite{CF},

(ii) provide a new proof of equivalent characterizations via Littlewood--Paley square functions of the product
Hardy spaces on spaces of homogeneous type in \cite{HLW} whose proofs required the H\"older continuity and cancellation condition,

(iii) provide the missing  characterizations of product Hardy spaces via Littlewood--Paley square functions in the setting
developed in \cite{DSTY} and \cite{DLY}, and

(iv) recover the recent related known
results in the setting of Bessel operators in \cite{DLWY} whose proofs relied on the H\"older regularity,
and results for Bessel Schr\"odinger operators in \cite{BDLWY} whose proofs used the Moser type inequality.

For more details and explanations of (iii) and (iv), we refer to Section 4.

We now recall some basic facts
concerning spaces of homogeneous type. Let $(X,\rho)$ be a  metric space, and $\mu$ be a positive Radon measure on $X$.
Write $V(x,r):=\mu(B(x,r))$, where $B(x,r)$ denotes the open ball centered at $x$ with radius $r$. We say that $(X,\rho, \mu)$ is
a space of homogeneous type if it satisfies the volume doubling property:
\begin{equation} \label{doub}
V(x,2r)  \leq V(x,r)
\end{equation}
for all $x \in X$ and $r >0$.  An immediate consequence of \eqref{doub} is that
there exist constants $C$ and $n$ such that
\begin{equation} \label{hom}
V(x, \lambda r) \leq C\lambda^{n}V(x,r)
\end{equation}
for all $x \in X$, $r>0$ and $\lambda \geq 1$.
The constant $n$ plays the role of an upper bound of the dimension, though it need not even be an integer,
and we want to take $n$ as small as possible.  There also exist constants $C$ and $D$,
$0\leq D \leq n$, so that
\begin{equation} \label{tos}
V(y,r)\leq C\left(1+\frac{\rho(x,y)}{r} \right)^{D}V(x,r)
\end{equation}
uniformly for all $x, y \in X$ and $r >0$. Indeed, property \eqref{tos} with $D=n$ is a direct consequence
of \eqref{hom}. In the case where $X$ is the Euclidean space $\mathbb{R}^{n}$ or a Lie group of polynomial
growth, $D$ can be chosen to be $0$.

Throughout this paper, we assume that, for $i=1,2$, $(X_{i}, \rho_{i}, \mu_{i})$ is a space of homogenous type
with $\mu(X_{i}) = \infty$. The constant $n$ (resp. $D$) in \eqref{hom} (resp. \eqref{tos}) for $(X_{i}, \rho_{i}, \mu_{i})$ is denoted by
$n_{i}$ (resp. $D_{i}$). Let $L_{i}$, $i=1,2$, be a linear operator on $L^{2}(X_{i},d\mu_{i})$
satisfying the following properties:

{\bf (H1)} Each $L_{i}$ is a non-negative self-adjoint operator on $L^{2}(X_{i},d\mu_{i})$;

{\bf (H2)} The kernel of the semigroup $e^{-t L_{i}}$, denoted by $p^{(i)}_{t}(x_{i}, y_{i})$, is a measurable
function on $X_{i} \times X_{i}$ and obeys a Gaussian upper bound, that is,
\begin{equation*} \label{GUB}
\left| p^{(i)}_{t}(x_{i},y_{i})\right| \leq \frac{C_{i}}{V(x_{i},\sqrt{t})}\exp \left(-\frac{\rho_{i}(x_{i},y_{i})^{2}}{c_{i}t} \right)
\end{equation*}
for all $t > 0$ and a.e. $(x_{i}, y_{i}) \in X_{i} \times X_{i}$,
where $C_i$ and $c_i$ are positive constants, for $i=1,2$.

\begin{definition}
Let $\Phi_{1}, \Phi_{2} \in \mathcal{S}(\mathbb{R})$.

${\rm a)}$ Given a function $f \in L^{2}(X_{1} \times X_{2})$, we define the product type Littlewood--Paley
$ g$-function $g_{\Phi_{1},\Phi_{2},L_{1},L_{2}}(f)$ associated to
$L_{1}$ and $L_{2}$ by
\begin{equation}
g_{\Phi_{1},\Phi_{2},L_{1},L_{2}}(f)(x_{1},x_{2}):= \left(\int_{0}^{\infty}\int_{0}^{\infty}\left|\Phi_{1}(t_{1}\sqrt{L_{1}})
\otimes \Phi_{2}(t_{2}\sqrt{L_{2}})
f(x_{1},x_{2}) \right|^{2}  \frac{dt_{1}}{t_{1}} \frac{dt_{2}}{t_{2}} \right)^{1/2} .
\end{equation}

${\rm b)}$  The product type area function $S_{\Phi_{1},\Phi_{2},L_{1},L_{2}}(f)$ associated to
$L_{1}$ and $L_{2}$ is defined by
\begin{equation}
\begin{split}
&S_{\Phi_{1},\Phi_{2},L_{1},L_{2}}(f)(x_{1},x_{2})\\
&:= \left(\iint_{\Gamma_{1}(x_{1}) \times \Gamma_{2}(x_{2})}\left|\Phi_{1}(t_{1}\sqrt{L_{1}})\otimes \Phi_{2}(t_{2}\sqrt{L_{2}})
f(y_{1},y_{2}) \right|^{2}  \frac{d\mu_{1}(y_{1})dt_{1}}{V(x_{1},t_{1})t_{1}} \frac{d\mu_{2}(y_{2})dt_{2}}{V(x_{2},t_{2})t_{2}} \right)^{1/2},
\end{split}
\end{equation}
where $\Gamma_{i}(x_{i}):= \{(y_{i},t_{i}) \in X_{i} \times (0, \infty): \rho_{i}(x_{i},y_{i})<t_{i}\}$ for $i=1,2$.

${\rm c)}$  For $\lambda_{1}, \lambda_{2}, t_{1}, t_{2} >0$, the product Peetre type maximal functions associated to $L_{1}$ and $L_{2}$ is defined by
\begin{equation} \label{Peetre}
\begin{split}
&\big[\Phi_{1}(t_{1}\sqrt{L_{1}})\otimes \Phi_{2}(t_{2}\sqrt{L_{2}})
\big]^{\ast}_{\lambda_{1},\lambda_{2}}f(x_{1}, x_{2}) \\
&\quad\quad : = \esssup_{(y_{1}, y_{2}) \in X_{1}\times X_{2}}
\frac{\big|\Phi_{1}(t_{1}\sqrt{L_{1}})\otimes \Phi_{2}(t_{2}\sqrt{L_{2}})
f(y_{1},y_{2})\big|}{(1+ t_{1}^{-1} \rho_{1}(x_{1},y_{1}))^{\lambda_{1}} (1+t_{2}^{-1}\rho_{2}(x_{2}, y_{2})  )^{\lambda_{2}}},
\quad (x_{1}, x_{2}) \in X_{1} \times X_{2}.
\end{split}
\end{equation}

${\rm d)}$  The product type Littlewood--Paley $g_{\lambda_{1}, \lambda_{2}}^{\ast}$-function associated to $L_{1}$
and $L_{2}$ is defined by
\begin{align}
&g^{\ast}_{\Phi_{1},\Phi_{2},,L_{1},L_{2},\lambda_{1},\lambda_{2}}(f)(x_{1},x_{2}) \\
&:=\!\left(\int_{0}^{\infty}\!\!\!\int_{0}^{\infty}\!\!\!\int_{X_{1}}\!\!\int_{X_{2}}\frac{\left|\Phi_{1}(t_{1}\sqrt{L_{1}})\otimes \Phi_{2}(t_{2}\sqrt{L_{2}})
f(y_{1},y_{2}) \right|^{2}}{(1+t_{1}^{-1}\rho_{1}(x_{1},y_{1}))^{n_{1}\lambda_{1}}(1+t_{2}^{-1}\rho_{2}(x_{2},y_{2}))^{n_{2}\lambda_{2}}}
 \frac{d\mu_{1}(y_{1})dt_{1}}{V(x_{1},t_{1})t_{1}} \frac{d\mu_{2}(y_{2})dt_{2}}{V(x_{2},t_{2})t_{2}} \right)^{1\over2}.\nonumber
\end{align}
\end{definition}

Following \cite{Feff,GCRF}, we introduce product Muckenhoupt weights on spaces of homogeneous type.
\begin{definition}
A non-negative locally integrable function $w$ on $X_{1} \times X_{2}$ is said to belong to the product
Muckenhoupt class $A_{p}(X_{1} \times X_{2})$ for a given $p \in (1, \infty)$, if there is a constant $C$
such that for all balls $B_{1} \subset X_{1}$ and $B_{2} \subset X_{2}$,
\begin{align*}
&\left( \frac{1}{\mu_{1}(B_{1}) \mu_{2}(B_{2})} \iint_{B_{1} \times B_{2}} w(x_{1},x_{2})d\mu_{1} (x_{1})d\mu_{2}(x_{2})\right) \\
& \quad \times
\left(\frac{1}{\mu_{1} (B_{1})\mu_{2}( B_{2})} \iint_{ B_{1} \times B_{2}  } w(x_{1},x_{2})^{-1/(p-1)} d\mu_{1} (x_{1})d\mu_{2}(x_{2})
\right)^{p-1} \leq C.
\end{align*}
The class $A_{1}(X_{1} \times X_{2})$ is defined to be the collection of all non-negative locally integrable
functions $w$ on $X_{1} \times X_{2}$ such that
\begin{equation*}
\left( \frac{1}{\mu_{1}(B_{1}) \mu_{2}(B_{2})} \iint_{B_{1} \times B_{2}} w(x_{1},x_{2})d\mu_{1} (x_{1})d\mu_{2}(x_{2}) \right)
\|w^{-1}\|_{L^{\infty}(B_{1} \times B_{2})} \leq C.
\end{equation*}
for all balls $B_{1} \subset X_{1}$ and $B_{2} \subset X_{2}$.
\end{definition}

We let $A_{\infty}(X_{1} \times X_{2}) : = \cup_{1 \leq p < \infty}A_{p}(X_{1}
\times X_{2})$ and, for any $w  \in A_{\infty}(X_{1} \times X_{2})$, define
\begin{equation*}
q_{w} := \inf \left\{q \in [1, \infty): w \in A_{q}(X_{1} \times X_{2})\right\},
\end{equation*}
the critical index for $w$ (see, for instance, \cite{GCRF}). For $1 < p <\infty$, the weighted Lebesgue space $L^{p}_{w}(X_{1} \times X_{2})$
is defined to be the collection of all measurable functions $f$ on $X_{1} \times X_{2}$ for which
\begin{equation*}
\|f\|_{L_{w}^{p}(X_{1}\times X_{2})} : = \left( \iint_{X_{1}\times X_{2}} |f(x_{1}, x_{2})|^{p}
w(x_1,x_2)d\mu_{1}(x_{1})d\mu_{2}(x_{2})\right)^{1/p}<\infty.
\end{equation*}

We next introduce a class of functions on $\mathbb{R}$ which will play a significant role in our formulation.

\begin{definition}
A function $\Phi \in \mathcal{S}(\mathbb{R})$ is said to belong to the class
$\mathcal{A}(\mathbb{R})$ if it satisfies the Tauberian condition, namely,
\begin{equation} \label{tauberian}
|\Phi(\lambda)| >0 \quad \mbox{on  } \{\varepsilon /2 < |\lambda| <2\varepsilon\}
\end{equation}
for some $\varepsilon >0$.
\end{definition}

Now we are ready to state our main results.

\begin{theorem} \label{main1}
Let $\Phi_{1}, \Phi_{2}, \widetilde{\Phi}_{1}, \widetilde{\Phi}_{2} \in \mathcal{A}(\mathbb{R})$ be even functions satisfying
\begin{align*}
\Phi_{1}(0)= \Phi_{2}(0) =\widetilde{\Phi}_{1}(0)= \widetilde{\Phi}_{2}(0) =0.
\end{align*}
Let $p \in (0, \infty)$ and $w \in A_{\infty}(X_{1} \times X_{2})$. Then there exists
a constant $C =C (p, w, \Phi_{1}, \Phi_{2}, \widetilde{\Phi}_{1}, \widetilde{\Phi}_{2})$ such that for all $f \in L^{2}(X_{1} \times X_{2})$,
\begin{equation*}
C^{-1} \|g_{\widetilde{\Phi}_{1},\widetilde{\Phi}_{2},L_{1},L_{2}}(f)\|_{L_{w}^{p}(X_{1}\times X_{2})} \leq \|g_{\Phi_{1},\Phi_{2},L_{1},L_{2}}(f)\|_{L_{w}^{p}(X_{1}\times X_{2}) }
\leq C \|g_{\widetilde{\Phi}_{1},\widetilde{\Phi}_{2},L_{1},L_{2}}(f)\|_{L_{w}^{p}(X_{1}\times X_{2})}.
\end{equation*}
\end{theorem}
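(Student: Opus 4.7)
The plan is to mirror the one-parameter weighted argument of \cite{Hu} in the product setting by establishing a product Plancherel--Polya pointwise inequality and then closing via the boundedness of the strong Hardy--Littlewood maximal operator on weighted product $L^p$ spaces. By symmetry only one direction needs to be proved. The starting point is a Calder\'on-type reproducing formula: using the Tauberian condition on $\widetilde{\Phi}_i$ together with $\widetilde{\Phi}_i(0)=0$, one constructs even Schwartz $\eta_1,\eta_2$ (also vanishing at the origin) such that $\int_0^\infty \widetilde{\Phi}_i(t\lambda)\eta_i(t\lambda)\,\tfrac{dt}{t}=1$ for $\lambda>0$. Spectral calculus, applied variable by variable, then yields the $L^2(X_1\times X_2)$-convergent identity
\[
f=\iint_0^\infty \bigl[\eta_1(t_1\sqrt{L_1})\widetilde{\Phi}_1(t_1\sqrt{L_1})\otimes\eta_2(t_2\sqrt{L_2})\widetilde{\Phi}_2(t_2\sqrt{L_2})\bigr]f\,\frac{dt_1\,dt_2}{t_1 t_2}.
\]

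Applying $\Phi_1(s_1\sqrt{L_1})\otimes\Phi_2(s_2\sqrt{L_2})$ to both sides, the workhorse is an off-diagonal estimate for the kernel $K^{(i)}_{s_i,t_i}$ of the one-variable composition $\Phi_i(s_i\sqrt{L_i})\eta_i(t_i\sqrt{L_i})$. Writing $\Phi_i(\lambda)=\lambda^{2M}\Phi_{i,M}(\lambda)$ and $\eta_i(\lambda)=\lambda^{2M}\eta_{i,M}(\lambda)$ with $\Phi_{i,M},\eta_{i,M}$ even Schwartz (since $\Phi_i(0)=\eta_i(0)=0$), redistributing $L_i^M$ between the two factors, and combining (H2) with functional-calculus bounds for $\sqrt{L_i}$ \`a la \cite{Hu}, one obtains, for every $M,N>0$,
\[
\bigl|K^{(i)}_{s_i,t_i}(x_i,y_i)\bigr|\leq C_{M,N}\Bigl(\min\Bigl\{\tfrac{s_i}{t_i},\,\tfrac{t_i}{s_i}\Bigr\}\Bigr)^{M}\frac{1}{V(x_i,s_i\vee t_i)}\Bigl(1+\tfrac{\rho_i(x_i,y_i)}{s_i\vee t_i}\Bigr)^{-N}.
\]
Integrating this bound against $|\widetilde{\Phi}_1(t_1\sqrt{L_1})\otimes\widetilde{\Phi}_2(t_2\sqrt{L_2})f(y_1,y_2)|$ and inserting the Peetre denominator (taking $\lambda_1,\lambda_2$ large relative to $n_i,D_i$), I get the pointwise product Plancherel--Polya inequality
\[
\bigl|\Phi_1(s_1\sqrt{L_1})\otimes\Phi_2(s_2\sqrt{L_2})f(x_1,x_2)\bigr|\lesssim \iint_0^\infty \prod_{i=1}^{2}\Bigl(\min\Bigl\{\tfrac{s_i}{t_i},\,\tfrac{t_i}{s_i}\Bigr\}\Bigr)^{M}G_{t_1,t_2}(x_1,x_2)\,\frac{dt_1\,dt_2}{t_1 t_2},
\]
where $G_{t_1,t_2}(x_1,x_2):=\bigl[\widetilde{\Phi}_1(t_1\sqrt{L_1})\otimes\widetilde{\Phi}_2(t_2\sqrt{L_2})\bigr]^{\ast}_{\lambda_1,\lambda_2}f(x_1,x_2)$. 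Squaring, integrating $\iint \tfrac{ds_1\,ds_2}{s_1 s_2}$, and applying Schur's lemma twice with the integrable kernel $\min\{s/t,t/s\}^{M}\,\tfrac{ds}{s}$ bounds $g_{\Phi_1,\Phi_2,L_1,L_2}(f)$ pointwise by the square function built from $G_{t_1,t_2}$. Choosing $r$ with $p/r>q_w$ and $\lambda_i>n_i/r$, the Peetre majorization $G_{t_1,t_2}(x_1,x_2)\lesssim \bigl[\mathcal{M}_S(|\widetilde{\Phi}_1(t_1\sqrt{L_1})\otimes\widetilde{\Phi}_2(t_2\sqrt{L_2})f|^{r})(x_1,x_2)\bigr]^{1/r}$ (with $\mathcal{M}_S$ the strong maximal operator on $X_1\times X_2$), the boundedness of $\mathcal{M}_S$ on $L^{p/r}_w(X_1\times X_2)$ from product Muckenhoupt theory, and the product Fefferman--Stein vector-valued inequality together deliver $\|g_{\Phi_1,\Phi_2,L_1,L_2}(f)\|_{L^p_w}\lesssim \|g_{\widetilde{\Phi}_1,\widetilde{\Phi}_2,L_1,L_2}(f)\|_{L^p_w}$.

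The principal obstacle is the composition-kernel estimate with the critical gain $\min\{s_i/t_i,t_i/s_i\}^{M}$ obtained from (H1)--(H2) alone, without any H\"older regularity or Moser-type estimate on the heat kernels. One has to extract this decay purely from the vanishing of $\Phi_i,\eta_i$ at $0$ via functional calculus and Davies--Gaffney/finite-propagation arguments, and then run the Schur reduction uniformly in both scales; this is the technical heart of the proof. Once the Plancherel--Polya inequality is in place, the passage through the strong maximal function and the weighted Fefferman--Stein step are standard in the product Muckenhoupt setting.
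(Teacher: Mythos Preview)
Your overall architecture mirrors the paper's: reduce to a Peetre-type maximal square function via a Calder\'on reproducing formula and composition-kernel estimates, then close with weighted Fefferman--Stein. The composition-kernel bound you state is essentially Lemma~\ref{ODE} (with the small caveat that $\Phi_i(0)=0$ plus evenness only gives vanishing of order~$2$, so the exponent $M$ is arbitrary only in the direction $s_i\ge t_i$, where it comes from $\eta_i$; this is harmless for the Schur step). After Schur you arrive at
\[
g_{\Phi_1,\Phi_2,L_1,L_2}(f)(x)\ \lesssim\ \Big(\iint_0^\infty \bigl|[\widetilde{\Phi}_1(t_1\sqrt{L_1})\otimes\widetilde{\Phi}_2(t_2\sqrt{L_2})]^{\ast}_{\lambda_1,\lambda_2}f(x)\bigr|^2\,\frac{dt_1\,dt_2}{t_1 t_2}\Big)^{1/2},
\]
which is exactly the intermediate quantity in the paper's chain (the combination of \eqref{haji} and Lemma~\ref{211}).

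The genuine gap is the next step, your single-scale ``Peetre majorization'' $G_{t_1,t_2}\lesssim[\mathcal{M}_S(|\widetilde{\Phi}_1(t_1\sqrt{L_1})\otimes\widetilde{\Phi}_2(t_2\sqrt{L_2})f|^{r})]^{1/r}$. The classical Peetre inequality of this form relies on a band-limited structure (compact spectral support at scale $t_i$), which $\widetilde{\Phi}_i(t_i\sqrt{L_i})f$ does not have here: the Tauberian condition only guarantees nonvanishing on an interval, and under (H1)--(H2) alone no single-scale sub-mean value property is available. This is precisely the obstacle the paper must work around. Lemma~\ref{central} proves instead a \emph{multi-scale} sub-mean inequality---the Peetre maximal function at level $2^{-j_i}$ is controlled by a weighted sum over \emph{all} finer levels $2^{-k_i}$, $k_i\ge j_i$---obtained via the bootstrap Lemma~\ref{lpm} (following Rychkov) that turns the a~priori decomposition into a self-improving estimate. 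Only then is Fefferman--Stein applied, at the level of the full $t$-integrated square function (Lemma~\ref{PEE}), to return to $g_{\widetilde{\Phi}_1,\widetilde{\Phi}_2}$. In short, the principal obstacle is not the composition-kernel estimate you flag---that part is routine from (H1)--(H2)---but the passage from the Peetre maximal function back to pointwise values, which you have assumed rather than proved.
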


\begin{theorem} \label{main2}
Let $\Phi_{1}, \Phi_{2} \in \mathcal{A}(\mathbb{R})$ be even functions.
Let $p \in (0, \infty)$, $\lambda_{i} >\frac{2q_{w}}{\min\{p,2\}}$
and $\lambda_{i}' > \frac{(n_{i}+D_{i})q_{w}}{\min\{p,2\}}$, $i=1,2$. Then for $f \in L^{2}(X_{1} \times X_{2})$
we have the following (quasi-)norm equivalence:
\begin{align} \label{man}
\|S_{\Phi_{1},\Phi_{2}, L_{1},L_{2}}(f)\|_{L^{p}_{w}(X_{1}\times X_{2})} \sim \|g_{\Phi_{1},\Phi_{2},L_{1},L_{2}}(f)\|_{L^{p}_{w}(X_{1}\times X_{2}) }
 \sim \|g^{\ast}_{\Phi_{1},\Phi_{2},L_{1},L_{2},\lambda_{1},\lambda_{2}}(f) \|_{L^{p}_w(X_{1}\times X_{2})} \nonumber \\
  \sim  \left\|\left(\int_{0}^{\infty} \int_{0}^{\infty}\left|\big[\Phi_{1}(t_{1}\sqrt{L_{1}})
\otimes \Phi_{2}(t_{2}\sqrt{L_{2}})\big]^{\ast}_{\lambda_{1}',\lambda_{2}'}f\right|^{2}\frac{dt_{1}}{t_{1}}\frac{dt_{2}}{t_{2}}
\right)^{1/2}  \right\|_{L_{w}^{p}(X_{1}\times X_{2})}.
\end{align}
\end{theorem}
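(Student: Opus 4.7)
The plan is to establish~\eqref{man} by combining easy pointwise or annular inequalities with one Plancherel--P\'olya type norm bound in the product setting. First I dispose of the easy comparisons. Restricting the integration in $g^\ast_{\lambda_1,\lambda_2}$ to the product cone $\Gamma_1(x_1)\times\Gamma_2(x_2)$ yields $S_{\Phi_1,\Phi_2,L_1,L_2}(f)\lesssim g^\ast_{\Phi_1,\Phi_2,L_1,L_2,\lambda_1,\lambda_2}(f)$, and taking $(y_1,y_2)=(x_1,x_2)$ in the essential supremum defining the Peetre maximal function yields
\begin{equation*}
 g_{\Phi_1,\Phi_2,L_1,L_2}(f)(x_1,x_2)\leq \Bigl(\iint_{(0,\infty)^2}\bigl|[\Phi_1(t_1\sqrt{L_1})\otimes\Phi_2(t_2\sqrt{L_2})]^\ast_{\lambda_1',\lambda_2'}f\bigr|^2\,\tfrac{dt_1\,dt_2}{t_1t_2}\Bigr)^{1/2}.
\end{equation*}
The reverse inequality $\|g^\ast_{\Phi_1,\Phi_2,L_1,L_2,\lambda_1,\lambda_2}(f)\|_{L^p_w}\lesssim\|S_{\Phi_1,\Phi_2,L_1,L_2}(f)\|_{L^p_w}$ is obtained by the standard annular decomposition of the $(y_i,t_i)$-region into product annuli $\Gamma^{k_1}_1(x_1)\times\Gamma^{k_2}_2(x_2)$ of radii $\sim 2^{k_i} t_i$, producing the pointwise estimate
\begin{equation*}
 g^\ast_{\Phi_1,\Phi_2,L_1,L_2,\lambda_1,\lambda_2}(f)(x_1,x_2)\lesssim \bigl[\mathcal{M}_s\bigl(S_{\Phi_1,\Phi_2,L_1,L_2}(f)^s\bigr)(x_1,x_2)\bigr]^{1/s}
\end{equation*}
with $\mathcal{M}_s$ the bi-parameter strong Hardy--Littlewood maximal operator of order $s\in(q_w,\min\{p,2\})$, followed by the $L^{p/s}_w$-boundedness of $\mathcal{M}_s$; the hypothesis $\lambda_i>2q_w/\min\{p,2\}$ makes such $s$ admissible and ensures summability of the annular series. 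Thus $\|S\|_{L^p_w}\sim\|g^\ast_{\lambda_1,\lambda_2}\|_{L^p_w}$.

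The essential step is the Plancherel--P\'olya type norm inequality
\begin{equation*}
 \Bigl\|\Bigl(\iint_{(0,\infty)^2}\bigl|[\Phi_1(t_1\sqrt{L_1})\otimes\Phi_2(t_2\sqrt{L_2})]^\ast_{\lambda_1',\lambda_2'}f\bigr|^2\,\tfrac{dt_1\,dt_2}{t_1t_2}\Bigr)^{1/2}\Bigr\|_{L^p_w} \lesssim \|S_{\Phi_1,\Phi_2,L_1,L_2}(f)\|_{L^p_w},
\end{equation*}
which together with the easy pointwise bounds above and Theorem~\ref{main1} forces all four norms in~\eqref{man} to be equivalent. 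To prove it I would pick auxiliary even $\Psi_1,\Psi_2\in\mathcal{A}(\mathbb{R})$ vanishing at $0$ and normalized so that $\int_0^\infty\Psi_i(s)^2\,ds/s=1$, and apply the spectral Calder\'on reproducing formula
\begin{equation*}
 f=\iint_0^\infty\bigl(\Psi_1(s_1\sqrt{L_1})^2\otimes\Psi_2(s_2\sqrt{L_2})^2\bigr)f\,\tfrac{ds_1\,ds_2}{s_1s_2},
\end{equation*}
discretized at dyadic scales $s_i\simeq 2^{j_i}$. The composition kernel estimate
\begin{equation*}
 |K_{\Phi_i(t_i\sqrt{L_i})\Psi_i(s_i\sqrt{L_i})}(u_i,v_i)|\lesssim \Bigl(\tfrac{s_i\wedge t_i}{s_i\vee t_i}\Bigr)^{M}\tfrac{1}{V(u_i,s_i\vee t_i)\bigl(1+\rho_i(u_i,v_i)/(s_i\vee t_i)\bigr)^{M'}},
\end{equation*}
valid for every $M,M'>0$ under (H1)--(H2) through spectral functional calculus and finite propagation speed (as in~\cite{Hu}), bounds the Peetre maximal function of $\Phi_1\otimes\Phi_2 f$ pointwise by a bi-parameter dyadic sum over $(j_1,j_2)$ of Peetre maximal functions of $\Psi_1(s_1\sqrt{L_1})\otimes\Psi_2(s_2\sqrt{L_2})f$ at scales $s_i=2^{j_i}$, with geometric decay in $|\log_2(t_i/s_i)|$. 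Further discretizing on dyadic cubes $Q_i$ of side $\sim s_i$ and using $\lambda_i'>(n_i+D_i)q_w/\min\{p,2\}$ to dominate each factor by $[\mathcal{M}_s(|\Psi_1(s_1\sqrt{L_1})\otimes\Psi_2(s_2\sqrt{L_2})f|^s)]^{1/s}$ for some $s\in(q_w,\min\{p,2\})$, the bi-parameter weighted Fefferman--Stein vector-valued maximal inequality followed by the continuous--discrete equivalence for product tent spaces reduces the norm to a constant multiple of $\|S_{\Psi_1,\Psi_2,L_1,L_2}(f)\|_{L^p_w}$, which via Theorem~\ref{main1} (applied to the corresponding $g$-functions) together with the chain established in the first paragraph is comparable to $\|S_{\Phi_1,\Phi_2,L_1,L_2}(f)\|_{L^p_w}$.

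The main obstacle will be orchestrating the bi-parameter dyadic decomposition so that the geometric decay factors extracted from the spectral kernel estimate remain absolutely summable after the Fefferman--Stein maximal inequality is applied independently in each variable. In the absence of any regularity assumption on the heat kernels of $L_1,L_2$, the requisite decay must be produced purely through spectral functional calculus from the Gaussian upper bound and then carefully matched to the bi-parameter dyadic summation; this is the technical heart of the argument and the key point at which the product-setting proof extends the one-parameter prototype of~\cite{Hu}.
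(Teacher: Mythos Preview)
Your overall chain of inequalities matches the paper's, but the implementation of the hard step contains a genuine gap. The composition kernel estimate you invoke, with decay $(s_i\wedge t_i)/(s_i\vee t_i)$ raised to an \emph{arbitrary} power $M$, is not available under the hypotheses of Theorem~\ref{main2}. By Lemma~\ref{ODE} the exponent $M$ is bounded by one plus the order of vanishing at $0$ of whichever function sits at the smaller scale. When $s_i\geq t_i$ that function is $\Phi_i$, and Theorem~\ref{main2} does \emph{not} assume $\Phi_i(0)=0$ (only evenness and the Tauberian condition). Hence in the regime $s_i\geq t_i$ you get no decay at all, your bi-parameter dyadic sum diverges, and the argument collapses. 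The same missing hypothesis also invalidates your appeal to Theorem~\ref{main1} at the end, since that theorem requires $\Phi_i(0)=0$. The paper avoids this by replacing the two-sided reproducing formula with the \emph{one-sided} identity of Lemma~\ref{uuuuu},
\[
\Psi_i(\lambda)\Upsilon_i(\lambda)+\sum_{k\geq 1}\Phi_i(2^{-k}\lambda)\Theta_i(2^{-k}\lambda)=1,
\]
so that after applying $\Phi_i(2^{-j_i}t_i\sqrt{L_i})$ every nontrivial term has the compactly supported $\Theta_i$ (vanishing to infinite order at $0$) at the smaller scale; Lemma~\ref{ODE} then gives arbitrary decay with no vanishing hypothesis on $\Phi_i$.

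There is a second gap. Your passage ``discretizing on dyadic cubes \dots\ to dominate each factor by $[\mathcal{M}_s(|\Psi_1\otimes\Psi_2 f|^s)]^{1/s}$'' is precisely the sub-mean-value inequality (a supremum controlled by an average), and it is not a consequence of cube discretization alone. This is the actual technical heart of the argument, and the paper proves it in Lemma~\ref{central} via the self-referential trick
\[
|\Phi f(z)|\leq |\Phi f(z)|^{r}\,\bigl([\Phi f]^{\ast}_{\lambda}(x)\bigr)^{1-r}\,(1+\rho(x,z)/t)^{\lambda(1-r)}
\]
followed by the iteration device of Lemma~\ref{lpm} (after Rychkov), which converts the resulting implicit inequality into the desired explicit bound. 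Your outline contains no analogue of this mechanism. Once Lemma~\ref{central} is in hand, the paper closes the loop directly (Lemmas~\ref{PEE} and~\ref{lkjh}) without ever needing to change the pair $(\Phi_1,\Phi_2)$ or invoke Theorem~\ref{main1}.
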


Having these results, one can introduce weighted product Hardy spaces associated to $L_{1}$ and $L_{2}$ as follows:
\begin{definition}
Let $p \in (0, \infty)$, $w \in A_{\infty}(X_{1} \times X_{2})$,
and $\Phi_{1}, \Phi_{2} \in \mathcal{A}(\mathbb{R})$ be even functions satisfying
\begin{align*}
\Phi_{1}(0)= \Phi_{2}(0) =0.
\end{align*}
The weighted product Hardy space $H^{p}_{w,L_{1},L_{2}}(X_{1}\times X_{2})$ associated to $L_{1}$ and $L_{2}$
is defined to be the completion of the set
\begin{align*}
\left\{f \in L^{2}(X_{1} \times X_{2}): S_{\Phi_{1},\Phi_{2},L_{1},L_{2}}(f) \in L_{w}^{p}(X_{1}\times X_{2})\right\}
\end{align*}
with respect to the (quasi-)norm
\begin{equation*}
\|f\|_{H^{p}_{w,L_{1},L_{2}}(X_{1}\times X_{2}) } : =\|S_{\Phi_{1},\Phi_{2},L_{1},L_{2}}(f)\|_{L^{p}_{w}(X_{1}\times X_{2})}.
\end{equation*}
\end{definition}

\begin{remark}\label{remark}
Combining Theorems \ref{main1} and \ref{main2} we see that the definition of $H^{p}_{w,L_{1},L_{2}}(X_{1}\times X_{2})$ is
independent of the choice of the even functions $\Phi_{1}, \Phi_{2}$,
as long as $\Phi_{1}, \Phi_{2} \in \mathcal{A}(\mathbb{R})$ and satisfy $\Phi_{1}(0)= \Phi_{2}(0) =0$.
In particular, if we choose $\Phi_{1}(\lambda)=\Phi_{2}(\lambda) = \lambda^{2} e^{-\lambda^{2}}$ for
$\lambda \in \mathbb{R}$, then the (quasi-)norm of $H^{p}_{w,L_{1},L_{2}}(X_{1}\times X_{2})$ can be written as
\begin{align*}
&\|f\|_{H^{p}_{w, L_{1},L_{2}}(X_{1}\times X_{2}) } \\
&: = \left\|\left(\iint_{\Gamma_{1}(x_{1}) \times \Gamma_{2}(x_{2})}\left|(t_{1}^{2}L_{1}e^{-t_{1}^{2}L_{1}})\otimes
(t_{2}^{2}L_{2}e^{-t_{2}^{2}L_{2}}) f(y_{1},y_{2}) \right|^{2}  \frac{d\mu_{1}(y_{1})dt_{1}}{V(x_{1},t_{1})t_{1}} \frac{d\mu_{2}(y_{2})dt_{2}}{V(x_{2},t_{2})t_{2}} \right)^{1\over2} \right\|_{L^{p}_{w}(X_{1}\times X_{2})}.
\end{align*}
Furthermore, from Theorem \ref{main2} we see that each quantity in \eqref{man}
can be used as an equivalent (quasi-)norm of the space $H^{p}_{w,L_{1},L_{2}}(X_{1}\times X_{2})$.
\end{remark}

As mentioned above, we make no further assumption on the heat kernel of $L_{1}$ or $L_{2}$ beyond the Gaussian
upper bounds. Thus, the approach in \cite{DLSY} which uses a Plancherel-Polya type inequality
and the approach in \cite{DLY} which uses a discrete characterization can not be applied directly to our setting.
To achieve our goal, we will follow the approach in \cite{BMP1,BMP2, Rychkov}, whose key ingradient
is a sub-mean value property; see Lemma \ref{central} below. This approach has recently been used in
\cite{Hu} to derive the equivalence of Littlewood--Paley $g$-function and area function characterisations
of one-parameter Hardy spaces associated to operators. However, the Littlewood--Paley
$g$-function and area function in \cite{Hu} are only defined via the heat semigroup, which are less general than
those defined in the current paper.

\section{Preliminaries}

In this section we collect some facts and technical results which will be needed in the subsequent section.
We start by noting that, if $(X, \rho, \mu)$ is a space of homogeneous type,
then for any $N >n$, there exists a constant $C=C(N)$ such that
\begin{equation}\label{inte}
\int_{X} \left(1+\frac{\rho(x,y)}{t}\right)^{-N} d\mu(y) \leq CV(x,t)
\end{equation}
for all $x\in X$ and $t>0$.

\begin{lemma} \label{smooth}
Assume that $(X,\rho,\mu)$ is a space of homogeneous type and $L$
is a non-negative self-adjoint operator on $L^{2}(X,d\mu)$ whose heat kernel obeys the Gaussian
upper bound. Let $\Phi \in \mathcal{S}(\mathbb{R})$ be even functions.
Then for every $N>0$, there exists a constant $C=C(\Phi,N)$ such that the kernel
$K_{\Phi(t\sqrt{L})}(x,y)$ of the operator $\Phi(t\sqrt{L})$ satisfies
\begin{equation*}
\big| K_{\Phi(t\sqrt{L})}(x,y)\big|
 \leq \frac{C}{V(x,t)}\left(1+ \frac{\rho(x,y)}{t}\right)^{-N}.
\end{equation*}
\end{lemma}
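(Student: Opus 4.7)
My plan is to combine the Fourier representation of $\Phi$ with the finite speed of propagation for the wave operator $\cos(s\sqrt{L})$. Since $\Phi$ is even and Schwartz, Fourier inversion together with the spectral theorem yields
\begin{equation*}
\Phi(t\sqrt{L}) \;=\; \frac{1}{\pi}\int_{0}^{\infty} \widehat{\Phi}(s)\,\cos(st\sqrt{L})\,ds,
\end{equation*}
where $\widehat{\Phi}\in\mathcal{S}(\mathbb R)$ has rapid decay. This representation reduces pointwise bounds on $K_{\Phi(t\sqrt L)}(x,y)$ to pointwise bounds on the wave-operator kernels, weighted by the tail behaviour of $\widehat\Phi$.

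The second ingredient is the now-standard finite propagation speed property: under hypotheses (H1)--(H2) together with the doubling condition \eqref{doub}, the Schwartz kernel of $\cos(r\sqrt L)$ is supported in $\{(x,y)\in X\times X:\rho(x,y)\leq r\}$ for every $r>0$. Applied to the integral above, this says that only the range $s\geq \rho(x,y)/t$ contributes to $K_{\Phi(t\sqrt L)}(x,y)$. Setting $R:=1+\rho(x,y)/t$ and smoothly truncating $\widehat\Phi$ at scale $R$, the truncated part contributes nothing at $(x,y)$, while the remainder is controlled by $\int_{|s|>R}|\widehat\Phi(s)|\,ds$, which decays faster than any polynomial in $R$ by the Schwartz regularity of $\widehat\Phi$. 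This supplies the factor $(1+\rho(x,y)/t)^{-N}$ for any prescribed $N$.

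To produce the volume normalisation $V(x,t)^{-1}$, I would exploit the Gaussian on-diagonal bound from (H2): composing the wave representation with the heat semigroup on each side and combining the $L^{2}$-unitarity of $\cos(s\sqrt L)$ with $\|e^{-t^{2}L/4}\|_{L^{2}\to L^{\infty}}\lesssim V(\cdot,t)^{-1/2}$ (a standard consequence of the Gaussian bound), one converts the distributional identity into a pointwise kernel estimate. The doubling properties \eqref{hom}--\eqref{tos} are then used to transfer the volume factor from $y$ to $x$, producing $V(x,t)^{-1}$ up to a polynomial in $1+\rho(x,y)/t$ which is absorbed into the tail decay of the previous paragraph.

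The principal technical obstacle is this last transfer, from $L^{2}$-operator bounds (where finite propagation speed is naturally formulated) to pointwise kernel bounds with the correct volume factor \emph{and} arbitrary polynomial decay simultaneously. This is the step that requires some care; however, with the Gaussian bound (H2), the $L^{2}$-isometry of $\cos(s\sqrt L)$, and the finite propagation property in hand, the proof goes through along the standard lines of the Sikora--Coulhon--Duong--Ouhabaz machinery, yielding the asserted estimate for every $N>0$.
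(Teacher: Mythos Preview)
Your approach is correct and is essentially the standard one. The paper does not supply its own argument for this lemma; it simply cites \cite[Lemma~2.3]{CD} and \cite[Lemma~2.1]{SY}. The proofs in those references proceed exactly along the lines you sketch: write $\Phi(t\sqrt{L})$ as a Fourier integral against $\cos(st\sqrt{L})$, invoke the finite propagation speed of the wave group (a consequence of the Davies--Gaffney estimate, which in turn follows from the Gaussian pointwise bound (H2)), and combine this with the $L^{2}\to L^{\infty}$ mapping bound $\|e^{-t^{2}L}\|_{L^{2}\to L^{\infty}}\lesssim V(\cdot,t)^{-1/2}$ to pass from operator-level estimates to pointwise kernel bounds.

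Two small points of precision. First, finite propagation speed under (H1)--(H2) holds with some speed $c_{0}>0$ rather than unit speed, so the cutoff scale should be $R\sim \rho(x,y)/(c_{0}t)$ rather than exactly $1+\rho(x,y)/t$; this only affects constants. Second, your description of the truncation is slightly inverted: it is the portion of $\widehat{\Phi}$ supported in $\{|s|<\rho(x,y)/(c_{0}t)\}$ that contributes nothing at $(x,y)$ by finite speed, so the entire kernel value comes from the tail $\{|s|\gtrsim \rho(x,y)/t\}$, whose size is governed by the Schwartz decay of $\widehat{\Phi}$. With these adjustments your outline matches the cited proofs, and the technical step you flag---upgrading to a pointwise bound with the volume factor $V(x,t)^{-1}$---is handled in \cite{CD,SY} precisely via the $L^{2}\to L^{\infty}$ smoothing of the heat semigroup together with doubling, as you indicate.
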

\begin{proof}
For the proof, we refer to \cite[Lemma 2.3]{CD}. See also \cite[Lemma 2.1]{SY}.
\end{proof}

\begin{lemma} \label{ODE}
Assume that $(X,\rho,\mu)$ is a space of homogeneous type and $L$
is a non-negative self-adjoint operator on $L^{2}(X,d\mu)$ whose heat kernel obeys the Gaussian
upper bound. Let $\Phi, \Psi \in \mathcal{S}(\mathbb{R})$ be even functions and let $\Psi$ satisfy
\begin{equation} \label{704}
\Psi^{(\nu)}(0)=0, \quad \nu=0,1,\cdots, m
\end{equation}
for some positive odd integer $m$. Then for every $N>0$, there exists a constant $C=C(\Phi, \Psi, N, m)$ such
that for all $s\geq t>0$,
\begin{align} \label{sirre}
\left|K_{\Phi(s\sqrt{L})\Psi(t\sqrt{L})}(x,y)\right|
\leq C \left(\frac{t}{s}\right)^{m+1}\frac{1}{V(x,s)}\left(1+\frac{\rho(x,y)}{s}\right)^{-N}.
\end{align}
\end{lemma}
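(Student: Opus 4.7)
The plan is to package $\Phi(s\sqrt{L})\Psi(t\sqrt{L})$ as a single functional calculus of $\sqrt{L}$ at scale $s$, and then exploit the vanishing of $\Psi$ at the origin to extract the factor $(t/s)^{m+1}$, reducing the estimate to Lemma \ref{smooth} applied to one symbol rather than to a kernel composition.

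First I set $r:=t/s\in(0,1]$. The spectral theorem for the non-negative self-adjoint operator $L$ gives
\[
\Phi(s\sqrt{L})\Psi(t\sqrt{L}) = G_r(s\sqrt{L}),\qquad G_r(\lambda):=\Phi(\lambda)\Psi(r\lambda).
\]
Since $m$ is odd, $m+1$ is even, and combined with the evenness of $\Psi$, the vanishing conditions $\Psi^{(\nu)}(0)=0$ for $\nu=0,\dots,m$ together with the integral form of Taylor's remainder imply that
\[
\widetilde{\Psi}(\lambda):=\frac{\Psi(\lambda)}{\lambda^{m+1}}=\frac{1}{m!}\int_{0}^{1}(1-u)^m\Psi^{(m+1)}(u\lambda)\,du
\]
extends to an \emph{even} Schwartz function on $\mathbb{R}$ (evenness uses that $\Psi^{(m+1)}$ is even, since $m+1$ is even; rapid decay of $\Psi$ propagates to $\widetilde{\Psi}$). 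Consequently
\[
G_r(\lambda) = r^{m+1}\,H_r(\lambda),\qquad H_r(\lambda):=\lambda^{m+1}\Phi(\lambda)\widetilde{\Psi}(r\lambda).
\]

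The key verification is that $H_r$ lies in $\mathcal{S}(\mathbb{R})$ \emph{uniformly} in $r\in(0,1]$. Evenness is immediate from evenness of each factor. For the Schwartz seminorms, each differentiation of $\widetilde{\Psi}(r\lambda)$ in $\lambda$ produces a factor $r\le 1$, so by the Leibniz rule any seminorm $\sup_{\lambda\in\mathbb{R}}(1+|\lambda|)^N\bigl|H_r^{(k)}(\lambda)\bigr|$ is dominated by a finite combination of seminorms of $\Phi$ and $\widetilde{\Psi}$ that is independent of $r$.

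Finally, applying Lemma \ref{smooth} to $H_r(s\sqrt{L})$ at scale $s$, and tracking that its constant depends only on finitely many Schwartz seminorms of the symbol, yields
\[
\bigl|K_{H_r(s\sqrt{L})}(x,y)\bigr|\le \frac{C}{V(x,s)}\left(1+\frac{\rho(x,y)}{s}\right)^{-N}
\]
with $C=C(\Phi,\Psi,N,m)$ independent of $r$; multiplying by $r^{m+1}=(t/s)^{m+1}$ gives \eqref{sirre}. The only non-trivial step (and the only place the odd parity of $m$ plays a role) is the extraction of $\widetilde{\Psi}$ as an \emph{even} Schwartz function; once $H_r$ is identified as a uniformly Schwartz even symbol, the desired bound is a direct consequence of Lemma \ref{smooth}, so no kernel composition or triangle-inequality splitting is needed.
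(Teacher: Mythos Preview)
Your argument is correct, and it reaches the same conclusion as the paper by essentially the same algebraic extraction of $(t/s)^{m+1}$ via $\widetilde{\Psi}(\lambda)=\lambda^{-(m+1)}\Psi(\lambda)$. The difference lies in what happens afterward. The paper splits the operator as a composition $\Phi_m(s\sqrt{L})\Psi_m(t\sqrt{L})$ with $\Phi_m(\lambda)=\lambda^{m+1}\Phi(\lambda)$ and $\Psi_m=\widetilde{\Psi}$, applies Lemma~\ref{smooth} to each factor separately (at scales $s$ and $t$), and then integrates the resulting kernel bounds against one another using the triangle-type inequality $(1+\rho(x,z)/s)^{-N}(1+\rho(z,y)/t)^{-N}\le(1+\rho(x,y)/s)^{-N}$ together with \eqref{inte}. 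You instead absorb everything into a single even symbol $H_r(\lambda)=\lambda^{m+1}\Phi(\lambda)\widetilde{\Psi}(r\lambda)$ at scale $s$ and invoke Lemma~\ref{smooth} once.

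Your route is cleaner in that it bypasses the kernel composition entirely, but it relies on something the paper does not state explicitly: that the constant $C(\Phi,N)$ in Lemma~\ref{smooth} depends only on finitely many Schwartz seminorms of the symbol, so that the bound is uniform over the family $\{H_r:0<r\le 1\}$. This is certainly true (it is visible in the proofs in \cite{CD,SY}), and your uniform-seminorm verification via Leibniz and $r\le 1$ is accurate; still, it is worth flagging that you are using a slightly sharper form of Lemma~\ref{smooth} than the one recorded here. The paper's composition argument, by contrast, needs only the black-box statement of Lemma~\ref{smooth} applied twice to fixed functions $\Phi_m$ and $\Psi_m$, at the cost of the extra integration step.
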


\begin{proof}
First note that the property \eqref{704} implies
that the function $\lambda \mapsto \lambda^{-(m+1)} \Psi(\lambda)$ is an even function, smooth at $0$, and belongs
to $\mathcal{S}(\mathbb{R})$. We set $\Phi_{m}(\lambda) := \lambda^{m+1} \Phi(\lambda)$ and
$\Psi_{m}(\lambda): = \lambda^{-(m+1)} \Psi(\lambda)$ for $\lambda \in \mathbb{R}$.
Then both $\Phi_{m}$ and $\Psi_{m}$ are even functions and belong to $\mathcal{S}(\mathbb{R})$.
Since
\begin{align*}
\Phi(s\sqrt{L})\Psi(t\sqrt{L}) &= \left(\frac{t}{s} \right)^{m +1}\big[ (s\sqrt{L})^{m +1}
\Phi(s\sqrt{L})\big]\big[(t\sqrt{L})^{-(m+1)}\Psi(t\sqrt{L})\big] \\
&= \left(\frac{t}{s} \right)^{m +1}\Phi_{m}(s\sqrt{L})\Psi_{m}(t\sqrt{L}),
\end{align*}
it follows from Lemma \ref{smooth} that
\begin{align} \label{fdaga}
& \left|K_{\Phi(s\sqrt{L})\Psi(t\sqrt{L})} (x,y) \right|
=\left(\frac{t}{s} \right)^{m +1}\left| K_{\Phi_{m}(s\sqrt{L})\Psi_{m}(t\sqrt{L})} (x,y)\right| \\
&\quad  \leq \left(\frac{t}{s} \right)^{m +1} \int_{X} \left|K_{ \Phi_{m}(s\sqrt{L})}(x,z) K_{\Psi_{m}(t\sqrt{L})}(z,y) \right|d\mu(z)\nonumber \\
 \leq C(\Phi, \Psi, N, m) &\left(\frac{t}{s} \right)^{m +1}  \int_{X}
\frac{1}{V(x,s)}\left( 1+\frac{\rho(x,z)}{s} \right)^{-N}\frac{1}{V(y,t)}\left( 1+\frac{\rho(z,y)}{t} \right)^{-(N+n+1)}d\mu(z). \nonumber
\end{align}
For $s \geq t >0$, we have
\begin{align*}
\left( 1+\frac{\rho(x,z)}{s} \right)^{-N}\left( 1+\frac{\rho(z,y)}{t} \right)^{-N} \leq \left( 1+\frac{\rho(x,y)}{s} \right)^{-N}.
\end{align*}
This along with \eqref{inte} yields
\begin{equation} \label{fdaga2}
\begin{split}
\int_{X} & \left( 1+\frac{\rho(x,z)}{s} \right)^{-N}\left( 1+\frac{\rho(z,y)}{t} \right)^{-(N+n+1)}d\mu(z) \\
& \leq \left( 1+\frac{\rho(x,y)}{s} \right)^{-N} \int_{X} \left( 1+\frac{\rho(z,y)}{t} \right)^{-(n+1)}d\mu(z) \\
& \leq C\left( 1+\frac{\rho(x,y)}{s} \right)^{-N}  V(y,t).
\end{split}
\end{equation}
Combining \eqref{fdaga} and \eqref{fdaga2} we obtain \eqref{sirre}.
\end{proof}

\begin{lemma} \label{uuuuu}
Suppose $\Phi \in \mathcal{A}(\mathbb{R})$ is an even function.
Then there exist even functions $\Psi, \Upsilon, \Theta \in \mathcal{S}(\mathbb{R})$
such that
\begin{align*}
&\quad\quad \supp \Upsilon \subset \{|\lambda| \leq 2\varepsilon\}, \\
&\quad \supp \Theta \subset \{\varepsilon/2 \leq |\lambda|\leq 2\varepsilon\}
\end{align*}
and
\begin{equation*}
\Psi(\lambda)\Upsilon(\lambda)+ \sum_{k =1}^{\infty}\Phi(2^{-2k}\lambda)
\Theta(2^{-2k}\lambda)=1 \quad \mbox{for all } \lambda \in \mathbb{R},
\end{equation*}
where $\varepsilon$ is a constant from \eqref{tauberian}.
\end{lemma}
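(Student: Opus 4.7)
The plan is to construct a Calderón-type dyadic resolution of unity adapted to the Tauberian annulus of $\Phi$, in the spirit of Rychkov's construction, and then invert $\Phi$ on that annulus to define $\Theta$. First I would fix an even cutoff $\chi\in C_c^{\infty}(\mathbb{R})$ with $\chi\equiv 1$ on a neighborhood of the origin and $\supp\chi\subset\{|\lambda|\leq 2\varepsilon\}$, and form the annular bump $\tau(\sigma):=\chi(\sigma)-\chi(4\sigma)$. Telescoping then yields, for every $\lambda\in\mathbb{R}$,
\[
\chi(\lambda)+\sum_{k=1}^{\infty}\tau(2^{-2k}\lambda)=\lim_{K\to\infty}\chi(2^{-2K}\lambda)=1.
\]

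Next, the Tauberian hypothesis gives $|\Phi(\lambda)|\geq c>0$ on $\{\varepsilon/2\leq|\lambda|\leq 2\varepsilon\}$, so I would define
\[
\Theta(\lambda):=\tau(\lambda)/\Phi(\lambda)
\]
on $\supp\tau$ and zero elsewhere. Since $\Phi$ is bounded below on $\supp\tau$ and $\tau$ vanishes to all orders at its boundary, $\Theta$ is a compactly supported even $C^{\infty}$ function (hence Schwartz), and $\Phi\cdot\Theta=\tau$ pointwise. Then I would choose $\Upsilon\in C_c^{\infty}(\mathbb{R})$ even with $\Upsilon\equiv 1$ on $\supp\chi$ and $\supp\Upsilon\subset\{|\lambda|\leq 2\varepsilon\}$, and set $\Psi:=\chi$, so that $\Psi\Upsilon=\chi$. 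Substituting this into the telescoping identity gives
\[
\Psi(\lambda)\Upsilon(\lambda)+\sum_{k=1}^{\infty}\Phi(2^{-2k}\lambda)\Theta(2^{-2k}\lambda)=1,\qquad \lambda\in\mathbb{R},
\]
which is the claimed decomposition. Evenness of all four functions follows from the evenness of $\chi$ and $\Phi$.

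The main technical obstacle is the first stage: arranging $\supp\tau\subset\{\varepsilon/2\leq|\sigma|\leq 2\varepsilon\}$ in the face of the dyadic ratio $4$ forced by the $2^{-2k}$ dilation. One handles this by choosing $\chi$ with a sufficiently sharp transition zone so that the rising edge of $\tau$ near $|\sigma|=\varepsilon/2$ and the falling edge near $|\sigma|=2\varepsilon$ both fit inside the prescribed annulus, or equivalently by first building a smoothly overlapping partition of unity at ratio $2$ and then regrouping consecutive scales into single bumps at ratio $4$. Once the support of $\tau$ is correctly placed inside the Tauberian annulus, the remaining verifications---smoothness, evenness, and Schwartz decay of $\Psi,\Theta,\Upsilon$---are immediate from compact support.
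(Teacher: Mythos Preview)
There is a genuine gap at exactly the point you flag, and neither proposed fix works. If $\chi\equiv1$ on $\{|\lambda|\le b\}$ and $\supp\chi\subset\{|\lambda|\le a\}$ with $b<a$, then $\tau(\sigma)=\chi(\sigma)-\chi(4\sigma)$ is supported in $\{b/4\le|\sigma|\le a\}$; asking this to lie in $\{\varepsilon/2\le|\sigma|\le 2\varepsilon\}$ forces $b\ge2\varepsilon\ge a$, a contradiction, so no ``sufficiently sharp transition'' can help. Regrouping two ratio-$2$ bumps $\tilde\tau$ into $\tau(\mu)=\tilde\tau(2\mu)+\tilde\tau(\mu)$ halves the inner radius of the support and again overruns $\varepsilon/2$. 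The obstruction is arithmetic: any smooth dyadic resolution under dilation by $4$ requires annuli of ratio strictly greater than $4$, while the Tauberian hypothesis only guarantees $|\Phi|>0$ on the \emph{open} annulus of ratio exactly $4$. Hence you cannot ensure $|\Phi|\ge c>0$ on $\supp\tau$, and $\Theta=\tau/\Phi$ may fail to be smooth or even bounded.

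Two remarks put this in context. First, the ``$2^{-2k}$'' in the displayed identity is a typo: the paper's own proof and its later use in equation~\eqref{hpw1} both have $2^{-k}$. With ratio-$2$ dilations your construction does go through---take $\varepsilon<b<a<2\varepsilon$, so that $\supp\tau\subset\{b/2\le|\sigma|\le a\}$ is compact in the open Tauberian annulus, $|\Phi|$ is bounded below there by continuity, and $\Theta=\tau/\Phi$ is a genuine even $C_c^\infty$ function. Second, the paper's argument is structurally different and never inverts $\Phi$: it fixes $\Psi(\lambda)=e^{-\lambda^2}$, chooses nonnegative even bumps $\Omega,\Gamma$ with the prescribed supports, sets $\Xi=\Psi\Omega+\sum_{k\ge1}\Phi(2^{-k}\cdot)\Gamma(2^{-k}\cdot)$, checks that $\Xi$ is smooth and strictly positive (at every $\lambda$ some summand is nonzero), and then defines $\Upsilon=\Omega/\Xi$, $\Theta=\Gamma/\Xi$. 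The only division is by the globally positive function $\Xi$, which sidesteps the need for any lower bound on $\Phi$ over a prescribed set.
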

\begin{proof}
Define $\Psi(\lambda):=e^{-\lambda^{2}}$, $\lambda \in \mathbb{R}$. Obviously, $\Psi \in \mathcal{S}(\mathbb{R})$ and $\Psi$ is even.
Choose nonnegative even functions $\Omega, \Gamma \in \mathcal{S}(\mathbb{R})$ such that
\begin{align*}
&\Omega(\lambda) \neq 0  \Longleftrightarrow |\lambda| < 2\varepsilon,\\
\Gamma(&\lambda) \neq 0 \Longleftrightarrow \varepsilon/2 < |\lambda| < 2\varepsilon.
\end{align*}
Then we set
\begin{equation} \label{ser}
\Xi(\lambda):=\Psi(\lambda)\Omega(\lambda)+ \sum_{k=1}^{\infty}
\Phi(2^{-k}\lambda)\Gamma(2^{-k}\lambda), \quad \lambda \in \mathbb{R}.
\end{equation}
From the properties of $\Phi, \Psi, \Omega$ and $\Gamma$ it follows that $\Xi$ is strictly positive on $\mathbb{R}$.
In addition, from the properties of $\Omega$ and $\Gamma$ we see that for any fixed $\lambda_{0} \in \mathbb{R}\backslash \{0\}$,
the number of those $k$'s for which $\Phi(2^{-k}\lambda)\Gamma(2^{-k}\lambda)$ do not vanish identically in $(\frac{4\lambda_{0}}{5},
\frac{6\lambda_{0}}{5})$ is no more than 4, which implies that $\Xi$ is smooth in $(\frac{4\lambda_{0}}{5}, \frac{6\lambda_{0}}{5})$
and hence $\Xi \in C^{\infty}(\mathbb{R}\backslash \{0\})$. It is obvious that $\Xi$ is also smooth at the origin $0$. Therefore $\Xi \in C^{\infty}(\mathbb{R})$. Now define the functions $\Upsilon$ and $\Theta$  respectively by
\begin{equation*}
\Upsilon(\lambda):=\frac{\Omega(\lambda)}{\Xi(\lambda)}
\quad\mbox{and}\quad\Theta(\lambda):=\frac{\Gamma(\lambda)}{\Xi(\lambda) }.
\end{equation*}
Then it is straightforward to verify that $\Psi,\Upsilon$ and $\Theta$ satisfy the desired properties.
\end{proof}

\begin{lemma} \label{uuuuu1}
Suppose $\Phi \in \mathcal{A}(\mathbb{R})$ is an even function.
Then there exists an even functions $\Theta \in \mathcal{S}(\mathbb{R})$ such that
\begin{align*}
\supp \Theta \subset \{\varepsilon/2 \leq |\lambda|\leq 2\varepsilon\}
\end{align*}
and
\begin{equation*}
\sum_{k =-\infty}^{\infty}\Phi(2^{-k}\lambda)
\Theta(2^{-k}\lambda)=1 \quad \mbox{for all } \lambda \in \mathbb{R} \backslash \{0\},
\end{equation*}
where $\varepsilon$ is a constant from \eqref{tauberian}.
\end{lemma}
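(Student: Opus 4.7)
The plan is to mirror the proof of Lemma \ref{uuuuu}, replacing the one-sided dyadic sum together with the low-frequency correction $\Psi\Upsilon$ by a doubly infinite dyadic sum. This is legitimate because the identity is now only required on $\mathbb{R}\setminus\{0\}$, so the dyadic annuli $\{2^{k-1}\varepsilon/2 < |\lambda| < 2^{k+1}\varepsilon\}$ ($k \in \mathbb{Z}$) already cover the relevant domain. First I would fix an even nonnegative bump $\eta \in \mathcal{S}(\mathbb{R})$ with $\eta(\lambda) \neq 0 \Longleftrightarrow \varepsilon/2 < |\lambda| < 2\varepsilon$, and set $\Gamma(\lambda) := \Phi(\lambda)\eta(\lambda)$. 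This choice is crucial: it forces the products $\Phi(2^{-k}\lambda)\Gamma(2^{-k}\lambda) = \Phi(2^{-k}\lambda)^{2}\eta(2^{-k}\lambda)$ to be nonnegative and converts the Tauberian lower bound on $|\Phi|$ into a genuine lower bound on $\Phi\cdot\Gamma$.

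Next I would define
\[
\Xi(\lambda) := \sum_{k \in \mathbb{Z}} \Phi(2^{-k}\lambda)\Gamma(2^{-k}\lambda), \qquad \lambda \in \mathbb{R}\setminus\{0\}.
\]
For each fixed $\lambda_0 \neq 0$, only boundedly many $k$ place $2^{-k}\lambda_0$ into $\{\varepsilon/2 < |\lambda| < 2\varepsilon\}$, so the sum is locally finite on $\mathbb{R}\setminus\{0\}$ and defines a function in $C^{\infty}(\mathbb{R}\setminus\{0\})$. By the Tauberian condition on $\Phi$ combined with the sign choice above, every $\lambda \neq 0$ admits some dyadic scale contributing a strictly positive summand, so $\Xi(\lambda) > 0$ throughout $\mathbb{R}\setminus\{0\}$. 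Reindexing $k \mapsto k-1$ in the defining sum yields the scaling identity $\Xi(2\lambda) = \Xi(\lambda)$, hence $\Xi(2^{-k}\lambda) = \Xi(\lambda)$ for all $k \in \mathbb{Z}$ and $\lambda \neq 0$.

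I would then set $\Theta(\lambda) := \Gamma(\lambda)/\Xi(\lambda)$ for $\lambda \neq 0$ and $\Theta(0) := 0$. Since $\Gamma$ is supported inside $\{\varepsilon/2 < |\lambda| < 2\varepsilon\}$ and vanishes to infinite order at the endpoints, while $\Xi$ is smooth and bounded below by a positive constant on a neighborhood of $\supp\Gamma$, the quotient $\Theta$ is smooth across $\lambda = 0$ and across the boundary of $\supp\Gamma$, hence $\Theta \in \mathcal{S}(\mathbb{R})$; evenness is inherited from $\Gamma$ and $\Xi$, and $\supp\Theta \subset \{\varepsilon/2 \leq |\lambda| \leq 2\varepsilon\}$. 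Finally, using the dilation invariance of $\Xi$, for any $\lambda \neq 0$,
\[
\sum_{k \in \mathbb{Z}} \Phi(2^{-k}\lambda)\Theta(2^{-k}\lambda) = \sum_{k \in \mathbb{Z}} \frac{\Phi(2^{-k}\lambda)\Gamma(2^{-k}\lambda)}{\Xi(2^{-k}\lambda)} = \frac{1}{\Xi(\lambda)}\sum_{k \in \mathbb{Z}} \Phi(2^{-k}\lambda)\Gamma(2^{-k}\lambda) = 1,
\]
which is the desired identity. The main obstacle, as in Lemma \ref{uuuuu}, is guaranteeing strict positivity of the denominator $\Xi$: one cannot pick $\Gamma$ independently of $\Phi$, since the Tauberian condition only controls $|\Phi|$ and not the sign of $\Phi$. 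The device $\Gamma = \Phi\eta$ reduces the sum to one of nonnegative terms $\Phi^{2}\eta$, after which positivity is immediate and the remaining verifications are routine.
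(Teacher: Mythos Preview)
Your proof is correct and follows the same construction the paper intends (the paper merely says the proof is ``analogous to that of Lemma~\ref{uuuuu}'' and omits the details): build a smooth, strictly positive, locally finite function $\Xi$ from dyadic dilates and set $\Theta = \Gamma/\Xi$. The one substantive variation is your choice $\Gamma = \Phi\eta$ in place of a $\Phi$-independent nonnegative bump $\Gamma$ as in Lemma~\ref{uuuuu}; this forces every summand $\Phi(2^{-k}\lambda)\Gamma(2^{-k}\lambda) = \Phi(2^{-k}\lambda)^{2}\eta(2^{-k}\lambda)$ to be nonnegative, so the strict positivity of $\Xi$ follows directly from the Tauberian condition with no side discussion of the sign of $\Phi$ on $\{\varepsilon/2 < |\lambda| < 2\varepsilon\}$. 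The paper's version tacitly relies on $\Phi$ having a fixed sign on that annulus (which does hold, by continuity, evenness, and nonvanishing, but is not spelled out); your device bypasses that point at the cost of letting $\Gamma$ depend on $\Phi$. Apart from this, the two arguments are the same.
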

\begin{proof}
The proof is analogous to that of Lemma \ref{uuuuu} and thus we omit the details.
\end{proof}

\begin{lemma} \label{negle}
Assume that $(X,\rho,\mu)$ is a space of homogeneous type with $\mu(X) =\infty$ and $L$
is a non-negative self-adjoint operator on $L^{2}(X,d\mu)$ whose heat kernel obeys the Gaussian
upper bound. Let $\{E(\lambda): \lambda \geq 0\}$ be spectral resolution of $L$.
Then the spectral measure of the set $\{0\}$ is zero, i.e., the point $\lambda = 0$ may be neglected in the spectral resolution.
\end{lemma}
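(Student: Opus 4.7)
The plan is to identify the spectral projection $E(\{0\})$ with the orthogonal projection onto $\ker L$ (viewed as a subspace of $L^2(X,d\mu)$), and then show that $\ker L$ must be trivial by exploiting the Gaussian bound together with the assumption $\mu(X) = \infty$.

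First I would recall that since $L$ is non-negative self-adjoint with spectral resolution $\{E(\lambda)\}_{\lambda \geq 0}$, the functional calculus gives $E(\{0\}) = \mathbf{1}_{\{0\}}(L)$, which is the orthogonal projection of $L^2(X,d\mu)$ onto $\ker L$. Hence it suffices to prove $\ker L = \{0\}$. Take any $f \in \ker L$. Then for each $t > 0$, the functional calculus yields $e^{-tL}f = f$ in $L^2$, so we can represent
\begin{equation*}
f(x) = e^{-tL}f(x) = \int_X p_t(x,y) f(y)\, d\mu(y) \quad \text{for a.e. } x \in X.
\end{equation*}

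Next I would apply the Cauchy--Schwarz inequality and the Gaussian upper bound from hypothesis (H2) to obtain, for a.e.\ $x$,
\begin{equation*}
|f(x)| \leq \|p_t(x,\cdot)\|_{L^2(X)} \|f\|_{L^2(X)}.
\end{equation*}
A routine computation using the Gaussian bound and the doubling estimate \eqref{inte} gives
\begin{equation*}
\int_X |p_t(x,y)|^2\, d\mu(y) \leq \frac{C^2}{V(x,\sqrt{t})^2}\int_X \exp\!\Big(-\tfrac{2\rho(x,y)^2}{ct}\Big)\, d\mu(y) \leq \frac{C'}{V(x,\sqrt{t})},
\end{equation*}
and therefore
\begin{equation*}
|f(x)| \leq \frac{C''}{V(x,\sqrt{t})^{1/2}} \|f\|_{L^2(X)}
\end{equation*}
for a.e.\ $x$ and every $t > 0$.

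Finally I would let $t \to \infty$. Since $X = \bigcup_{r>0} B(x,r)$, monotone convergence combined with $\mu(X) = \infty$ forces $V(x,r) \to \infty$ as $r \to \infty$ for every $x \in X$. Thus the right-hand side above tends to $0$, yielding $f(x) = 0$ for a.e.\ $x$, i.e., $f = 0$ in $L^2(X,d\mu)$. Hence $\ker L = \{0\}$ and consequently $E(\{0\}) = 0$. There is no serious obstacle here; the only point that requires a moment's care is the pointwise $L^2$-bound of $p_t(x,\cdot)$ via the Gaussian estimate, which is completely standard once \eqref{inte} is available.
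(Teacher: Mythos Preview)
Your proof is correct and follows essentially the same approach as the paper: both identify the range of $E(\{0\})$ with $\ker L$, use the functional calculus to get $e^{-tL}f=f$, apply Cauchy--Schwarz together with the Gaussian bound to obtain $|f(x)|\le C\,V(x,\sqrt{t})^{-1/2}\|f\|_{L^2}$, and then let $t\to\infty$ using $\mu(X)=\infty$. The only cosmetic difference is that the paper phrases this as a proof by contradiction, whereas you argue directly that $\ker L=\{0\}$.
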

\begin{proof}
Assume by contradiction that  $E(\{0\}) \neq 0$, then there exists $g \in L^{2}(X)$
such that $f := E(\{0\})g$ is not the zero element in $L^{2}(X,d\mu)$. Since $E(\{0\})$
is a an orthogonal projection,
\begin{equation*}
E(\{0\})f = E(\{0\})E(\{0\})g = E(\{0\})g = f.
\end{equation*}
It follows that for all $t>0$,
\begin{align*}
e^{-tL}f =\int_{0}^{\infty}e^{-t\lambda}dE(\lambda)f =\int_{0}^{\infty}
e^{-t\lambda}dE(\lambda)E(\{0\})f =\int_{\{0\}}e^{-t\lambda}dE(\lambda)f = E(\{0\})f = f.
\end{align*}
Hence, for a.e. $x \in X$ and all $t >0$, we have
\begin{align*}
&|f(x)|  = \big|e^{-tL}f(x) \big|
\leq \int_{X} |p_{t}(x,y)||f(y)|d\mu(y) \\
&\leq  \|f\|_{L^{2}(X,d\mu)} \left(\int_{X}|p_{t}(x,y)|^{2} d\mu(y)\right)^{1/2}\\
& \leq  C\|f\|_{L^{2}(X,d\mu)} \left(\int_{X}\frac{1}{V(x,\sqrt{t})^{2}}
\left(1+\frac{\rho(x,y)}{\sqrt{t}}\right)^{-(n+1)} d\mu(y)\right)^{1/2}\\
& \leq  C\|f\|_{L^{2}(X,d\mu)}V(x,\sqrt{t})^{-1/2}.
\end{align*}
Since $\mu(X) =\infty$, letting $t \rightarrow \infty$ in the above yields that $f(x) =0$.
Hence $f =0$ in $L^{2}(X,d\mu)$, which leads to a contradiction. Therefore we must have $E(\{0\})=0$.
\end{proof}

The following two lemmas are two-parameter counterparts of Lemma 2 and Lemma 3 in \cite{Rychkov}, respectively.
These can be proved by slightly modifying the proofs of the corresponding one-parameter results. We omit the details here.

\begin{lemma} \label{RY} {\rm (\cite[Lemma 2]{Rychkov})}
Let $0<p,q < \infty$ and $\sigma_{1},\sigma_{2} >0$. Let $w$ be arbitrary weight (i.e., non-negative locally integrable function)
on $X_{1} \times X_{2}$. Let $\{g_{j_{1},j_{2}}\}_{j_{1},j_{2}=-\infty}^{\infty}$
be a sequence of non-negative measurable functions on $X_{1}\times X_{2}$ and put
\begin{equation} \label{sequ}
h_{j_{1},j_{2}}(x_{1},x_{2}) =\sum_{k_{1} =-\infty}^{\infty}\sum_{k_{2} =-\infty}^{\infty}2^{-|k_{1}-j_{1}|\sigma_{1}}2^{-|k_{2}-j_{2}|\sigma_{2}}
g_{k_{1},k_{2}}(x_{1},x_{2})
\end{equation}
for $(x_{1},x_{2}) \in X_{1}\times X_{2}$ and $j_{1}, j_{2}\in \mathbb{Z}$.
Then, there exists a constant $C=C(q,\sigma_{1},\sigma_{2})$ such that
\begin{equation*}
\left\|\big\{ h_{j_{1},j_{2}}\big\}_{j_{1},j_{2}=-\infty}^{\infty}\right\|_{L_{w}^{p}(\ell^{q})}
\leq C\left\|\big\{ g_{j_{1}, j_{2}}\big\}_{j_{1},j_{2}=-\infty}^{\infty}\right\|_{L_{w}^{p}(\ell^{q})},
\end{equation*}
where
\begin{align} \label{shem}
&\left\|\big\{ g_{j_{1}, j_{2}}\big\}_{j_{1},j_{2}=-\infty}^{\infty}\right\|_{L_{w}^{p}(\ell^{q})}
:=\left\|\left\|\big\{ g_{j_{1}, j_{2}}\big\}_{j_{1},j_{2}=-\infty}^{\infty} \right\|_{\ell_{q}}\right\|_{L_{w}^{p}(X_{1} \times X_{2})}   \\
&\quad\quad
= \left\|\left(\sum_{j_{1}=-\infty}^{\infty} \sum_{j_{2}=-\infty}^{\infty}|g_{j_{1},j_{2}}(x_{1},x_{2})|^{q}\right)^{1/q}
\right\|_{L_{w}^{p}(X_{1}\times X_{2})}. \nonumber
\end{align}
\end{lemma}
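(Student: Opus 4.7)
The plan is to reduce the assertion to a pointwise bi-sequence estimate on $\mathbb{Z}^2$ and then integrate. Specifically, I aim to show that, for a.e.\ $(x_1, x_2) \in X_1 \times X_2$,
\begin{equation*}
\left(\sum_{j_1, j_2 = -\infty}^{\infty} |h_{j_1, j_2}(x_1, x_2)|^q \right)^{1/q} \le C(q, \sigma_1, \sigma_2) \left(\sum_{k_1, k_2 = -\infty}^{\infty} |g_{k_1, k_2}(x_1, x_2)|^q \right)^{1/q}.
\end{equation*}
Once this is established, it suffices to take the $L^p_w(X_1 \times X_2)$-norm of both sides and invoke the definition \eqref{shem} to obtain the asserted inequality, with a constant that depends only on $q, \sigma_1, \sigma_2$ (in particular, independent of $p$ and $w$).

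To prove the pointwise estimate, fix $(x_1, x_2)$ and set $a_{k_1, k_2} := g_{k_1, k_2}(x_1, x_2) \ge 0$. Formula \eqref{sequ} exhibits $\{h_{j_1, j_2}\}$ as the discrete convolution on $\mathbb{Z}^2$ of $\{a_{k_1, k_2}\}$ with the kernel $K_{k_1, k_2} := 2^{-|k_1|\sigma_1 - |k_2|\sigma_2}$, which belongs to $\ell^1(\mathbb{Z}^2)$ with norm bounded by a constant depending only on $\sigma_1$ and $\sigma_2$.

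For $q \ge 1$, I would invoke Young's inequality for discrete convolution on $\mathbb{Z}^2$, namely $\|K \ast a\|_{\ell^q(\mathbb{Z}^2)} \le \|K\|_{\ell^1(\mathbb{Z}^2)} \|a\|_{\ell^q(\mathbb{Z}^2)}$, which delivers the pointwise bound immediately. For $0 < q < 1$, Young's inequality fails, so I would instead use the elementary subadditivity $\bigl(\sum_j b_j\bigr)^q \le \sum_j b_j^q$ (valid for $b_j \ge 0$ and $q \in (0,1]$) to obtain
\begin{equation*}
|h_{j_1, j_2}(x_1,x_2)|^q \le \sum_{k_1, k_2 = -\infty}^{\infty} 2^{-q|k_1 - j_1|\sigma_1 - q|k_2 - j_2|\sigma_2} a_{k_1, k_2}^q.
\end{equation*}
Summing this in $(j_1, j_2)$, applying Fubini, and using the uniform bound $\sum_{j \in \mathbb{Z}} 2^{-q|j - k|\sigma_i} \le C(q, \sigma_i)$ for each fixed $k$, I conclude
\begin{equation*}
\sum_{j_1, j_2} |h_{j_1, j_2}(x_1,x_2)|^q \le C(q, \sigma_1, \sigma_2) \sum_{k_1, k_2} a_{k_1, k_2}^q,
\end{equation*}
which is the desired pointwise estimate in the range $q<1$.

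There is no substantial obstacle here; the only subtlety is that the cases $q \ge 1$ and $0 < q < 1$ must be handled by different elementary tools (Young's inequality versus $q$-subadditivity). Since the pointwise $\ell^q$-estimate holds with a constant independent of the weight $w$ and the exponent $p$, the final weighted integration step is automatic, and the lemma follows.
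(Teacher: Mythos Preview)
Your proof is correct. The paper itself omits the details, noting only that the lemma is the two-parameter counterpart of \cite[Lemma~2]{Rychkov} and can be proved by slightly modifying that one-parameter argument; your approach---reducing to a pointwise $\ell^{q}(\mathbb{Z}^{2})$ estimate via Young's inequality for $q\ge 1$ and the elementary $q$-subadditivity for $0<q<1$---is precisely that standard argument adapted to the bi-parameter setting.
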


\begin{lemma} \label{lpm} {\rm (\cite[Lemma 3]{Rychkov})}
Let $0<r \leq 1$, and let $\{b_{j_{1},j_{2}}\}_{j_{1},j_{2}=-\infty}^{\infty}$ and $\{d_{j_{1},j_{2}}\}_{j_{1},j_{2} =-\infty}^{\infty}$
be two sequences taking values in $(0,\infty]$ and $(0, \infty)$ respectively. Assume that there exists
$N_{0} >0$ such that
\begin{align} \label{267}
d_{j_{1},j_{2}} = O(2^{j_{1}N_{0}}2^{j_{2}N_{0}}), \quad j_{1}, j_{2} \rightarrow \infty,
\end{align}
and that for every $N >0$ there exists a finite constant $C=C_{N}$ such that
\begin{equation} \label{984}
d_{j_{1},j_{2}} \leq C_{N} \sum_{k_{1}=j_{1}}^{\infty}\sum_{k_{2}=j_{2}}^{\infty}2^{(j_{1}-k_{1})N}
2^{(j_{2}-k_{2})N}b_{k_{1},k_{2}}d_{k_{1}, k_{2}}^{1-r}, \quad j_{1},j_{2} \in \mathbb{Z}.
\end{equation}
Then for every $N>0$,
\begin{equation} \label{176}
d_{j_{1}, j_{2}}^{r} \leq C_{N} \sum_{k_{1}=j_{1}}^{\infty}\sum_{k_{2}= j_{2}}^{\infty}2^{(j_{1}-k_{1})Nr}
2^{(j_{2}- k_{2})Nr}b_{k_{1},k_{2}}, \quad j_{1}, j_{2} \in \mathbb{Z},
\end{equation}
with the same constants $C_{N}$.
\end{lemma}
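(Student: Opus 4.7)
My plan is to avoid the direct iteration in Rychkov~\cite{Rychkov} and instead introduce an auxiliary supremum sequence that self-improves in one shot. Fix $\beta>N_{0}$ and set
\[
D_{j_{1},j_{2}}:=\sup_{k_{1}\geq j_{1},\,k_{2}\geq j_{2}}\,2^{-\beta(k_{1}-j_{1})-\beta(k_{2}-j_{2})}\,d_{k_{1},k_{2}}.
\]
The polynomial growth assumption \eqref{267} makes $D_{j_{1},j_{2}}$ finite; trivially $d_{j_{1},j_{2}}\leq D_{j_{1},j_{2}}$, and a change-of-basepoint computation gives $D_{\ell_{1},\ell_{2}}\leq 2^{\beta[(\ell_{1}-j_{1})+(\ell_{2}-j_{2})]}D_{j_{1},j_{2}}$ whenever $\ell_{i}\geq j_{i}$. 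A similar change-of-basepoint shows that the weighted $b$-sum
\[
B_{\ell_{1},\ell_{2}}^{(\gamma)}:=\sum_{k_{1}\geq \ell_{1},\,k_{2}\geq \ell_{2}}2^{(\ell_{1}-k_{1})\gamma+(\ell_{2}-k_{2})\gamma}b_{k_{1},k_{2}}
\]
satisfies $B_{\ell_{1},\ell_{2}}^{(\gamma)}\leq 2^{\gamma[(\ell_{1}-j_{1})+(\ell_{2}-j_{2})]}B_{j_{1},j_{2}}^{(\gamma)}$.

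Now fix $N>N_{0}$ and specialize $\beta=N$. I apply \eqref{984} at the base point $(\ell_{1},\ell_{2})$ with $\ell_{i}\geq j_{i}$ and estimate, under the sum, $d_{k_{1},k_{2}}^{1-r}\leq D_{\ell_{1},\ell_{2}}^{1-r}\,2^{\beta(1-r)[(k_{1}-\ell_{1})+(k_{2}-\ell_{2})]}$. The resulting geometric series in $(k_{1},k_{2})$ converges for $N$ large and produces
\[
d_{\ell_{1},\ell_{2}}\leq C_{N}D_{\ell_{1},\ell_{2}}^{1-r}B_{\ell_{1},\ell_{2}}^{(\gamma)},\qquad\gamma:=N-\beta(1-r)=Nr.
\]
Substituting the change-of-basepoint estimates for $D_{\ell}^{1-r}$ and $B_{\ell}^{(\gamma)}$ and multiplying both sides by $2^{-\beta[(\ell_{1}-j_{1})+(\ell_{2}-j_{2})]}$ yields
\[
2^{-\beta[(\ell_{1}-j_{1})+(\ell_{2}-j_{2})]}d_{\ell_{1},\ell_{2}}\leq C_{N}D_{j_{1},j_{2}}^{1-r}\,B_{j_{1},j_{2}}^{(Nr)}\cdot 2^{(\gamma-\beta r)[(\ell_{1}-j_{1})+(\ell_{2}-j_{2})]}.
\]
The algebraic crux is the cancellation $\gamma-\beta r=Nr-Nr=0$, which reduces the residual exponential factor to $1$.

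Taking the supremum over $\ell_{1}\geq j_{1}$, $\ell_{2}\geq j_{2}$ produces $D_{j_{1},j_{2}}\leq C_{N}D_{j_{1},j_{2}}^{1-r}B_{j_{1},j_{2}}^{(Nr)}$. Because $d$ is positive and finite, $0<D_{j_{1},j_{2}}<\infty$, so one may divide by $D_{j_{1},j_{2}}^{1-r}$ to conclude $D_{j_{1},j_{2}}^{r}\leq C_{N}B_{j_{1},j_{2}}^{(Nr)}$, and the bound $d_{j_{1},j_{2}}\leq D_{j_{1},j_{2}}$ then gives \eqref{176} with the exponent $Nr$. The range $0<N\leq N_{0}$ follows immediately by monotonicity, since the right-hand side of \eqref{176} is non-increasing in $N$. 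The main obstacle is coordinating the two competing requirements $\beta>N_{0}$ (needed to keep $D_{j_{1},j_{2}}$ finite via \eqref{267}) and $\beta=N$ (needed to engineer the cancellation $\gamma=\beta r$); this is precisely what forces the first step to cover only $N>N_{0}$ and then recover all $N>0$ by a monotonicity argument. The passage from one to two parameters is routine bookkeeping with double-indexed geometric sums and introduces no new conceptual ingredient beyond~\cite[Lemma~3]{Rychkov}.
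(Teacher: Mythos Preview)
The paper supplies no proof here; it defers to Rychkov's one-parameter iteration and asserts that the two-parameter version is a routine modification. Your supremum approach---bootstrapping a single self-improving inequality for $D_{j_{1},j_{2}}$ rather than iterating \eqref{984} into itself---is a genuinely different and cleaner route, and for $N>N_{0}$ it is correct and does return the same constant $C_{N}$.

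The gap is the monotonicity step for $0<N\le N_{0}$. You write that ``the right-hand side of \eqref{176} is non-increasing in $N$,'' but that right-hand side is $C_{N}\sum_{k\ge j}2^{(j-k)Nr}b_{k}$; the sum is indeed non-increasing in $N$, yet the prefactor $C_{N}$ is an \emph{arbitrary} function of $N$ supplied by the hypothesis, so the product has no reason to be monotone. What your argument actually yields for $N\le N_{0}$ is
\[
d_{j_{1},j_{2}}^{\,r}\le C_{N'}\sum_{k_{1}\ge j_{1}}\sum_{k_{2}\ge j_{2}}2^{(j_{1}-k_{1})Nr+(j_{2}-k_{2})Nr}b_{k_{1},k_{2}}
\]
for any fixed $N'>N_{0}$---the correct exponent but the wrong constant $C_{N'}$ in place of $C_{N}$. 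Nor can one simply run your scheme with $\beta>N_{0}\ge N$ and the hypothesis at level $N$: that produces $d_{j}^{\,r}\le C_{N}B_{j}^{(\gamma)}$ with $\gamma=N-\beta(1-r)<Nr$, and since $B_{j}^{(\gamma)}\ge B_{j}^{(Nr)}$ the inequality goes the wrong way. This is harmless for the paper's applications (only the existence of some constant is ever used downstream), but it falls short of the lemma as stated, which insists on ``the same constants $C_{N}$.'' A secondary point: the finiteness of $D_{j_{1},j_{2}}$ from \eqref{267} is not automatic in two parameters, because a growth bound valid only as $j_{1},j_{2}\to\infty$ jointly does not control $d_{k_{1},k_{2}}$ when one index is held fixed and the other runs to infinity; you should either assume the slightly stronger uniform bound (which does hold in the paper's application) or address this explicitly.
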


For a locally integrable function $f$ on $X_{1} \times X_{2}$, the strong maximal function is defined by
\begin{equation*}
\mathcal{M}_{s}(f)(x_{1},x_{2}) := \sup_{(x_{1}, x_{2}) \in B_{1} \times B_{2}} \frac{1}{\mu_{1}(B_{1})
\mu_{2}(B_{2})} \iint_{B_{1} \times B_{2} } |f(y_{1},y_{2})|d\mu_{1}(y_{1})d\mu_{2}(y_{2}),
\end{equation*}
where $B_{i}$ runs over all balls in $X_{i}$, $i=1,2$.
Using \eqref{tos} and the volume doubling property, one can easily show that if $N_{i} > n_{i}+D_{i}$ for $i=1,2$, then
\begin{equation} \label{smfx}
\iint_{X_{1} \times X_{2}} \frac{|f(y_{1},y_{2})|}{\prod_{i=1}^{2}V(y_{i},t_{i})
(1 + t_{i}^{-1}\rho_{i}(x_{i},y_{i}))^{N_{i}}}d\mu_{1}(y_{1})d\mu_{2}(y_{2})
\leq C\mathcal{M}_{s}(f)(x_{1},x_{2}).
\end{equation}
We will also need the following weighted vector-valued inequality for strong maximal functions on spaces of homogeneous type.
See, for instance, \cite{GCRF} and \cite{Sato}.

\begin{lemma} \label{maximal}
Suppose $1<p<\infty$, $1<q \leq \infty$ and $w \in A_{p}(X_{1} \times X_{2})$.
Then there exists a constant $C$ such that
\begin{equation*}
\left\|\big\{\mathcal{M}_{s}(f_{j_{1}, j_{2}})\big\}_{j_{1},j_{2}=-\infty}^{\infty}\right\|_{L_{w}^{p}(\ell^{q})} \leq C
\left\|\big\{f_{j_{1}, j_{2}}\big\}_{j_{1},j_{2}=-\infty}^{\infty}\right\|_{L_{w}^{p}(\ell^{q})}
\end{equation*}
for all sequences $\big\{ f_{j_{1}, j_{2}}\big\}_{j_{1},j_{2}=-\infty}^{\infty}$ on $X_{1} \times X_{2}$,
where the space $L_{w}^{p}(\ell^{q})$ is defined by \eqref{shem}.
\end{lemma}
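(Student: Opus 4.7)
The plan is to reduce the product statement to the one-parameter weighted Fefferman--Stein vector-valued maximal inequality on a space of homogeneous type, which is classical. First I would establish the pointwise domination
\[
\mathcal{M}_{s}(f)(x_{1},x_{2}) \leq \mathcal{M}^{(1)}\bigl(\mathcal{M}^{(2)}(f)\bigr)(x_{1},x_{2}),
\]
where $\mathcal{M}^{(i)}$ denotes the one-parameter Hardy--Littlewood maximal operator on $X_{i}$ acting in the $i$-th variable with the other held fixed. This is immediate from the definition of the strong maximal function by splitting the average over a product ball $B_{1}\times B_{2}$ into two iterated averages.

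Next I would invoke the slice characterization of product Muckenhoupt classes: if $w\in A_{p}(X_{1}\times X_{2})$, then for $\mu_{2}$-a.e.\ fixed $x_{2}\in X_{2}$ the function $x_{1}\mapsto w(x_{1},x_{2})$ lies in $A_{p}(X_{1})$ with a constant bounded uniformly in $x_{2}$, and symmetrically for fixed $x_{1}$. This is proved by a Lebesgue differentiation argument applied to the defining inequality of $A_{p}(X_{1}\times X_{2})$: one shrinks one of the two factor balls to a point, using the differentiation theorem guaranteed by the doubling property \eqref{doub}.

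With these two ingredients, the proof reduces to two applications of the one-parameter weighted Fefferman--Stein inequality. Define
\[
g_{j_{1},j_{2}}(x_{1},x_{2}) := \mathcal{M}^{(2)}\bigl(f_{j_{1},j_{2}}(x_{1},\cdot)\bigr)(x_{2}).
\]
For a.e.\ $x_{2}\in X_{2}$ the slice weight $w(\cdot, x_{2})$ lies in $A_{p}(X_{1})$ with uniform constant, so the one-parameter vector-valued inequality in the $x_{1}$ variable yields
\begin{align*}
&\int_{X_{1}}\Bigl(\sum_{j_{1},j_{2}}|\mathcal{M}^{(1)}g_{j_{1},j_{2}}(x_{1},x_{2})|^{q}\Bigr)^{p/q}w(x_{1},x_{2})\,d\mu_{1}(x_{1}) \\
&\qquad\leq C\int_{X_{1}}\Bigl(\sum_{j_{1},j_{2}}|g_{j_{1},j_{2}}(x_{1},x_{2})|^{q}\Bigr)^{p/q}w(x_{1},x_{2})\,d\mu_{1}(x_{1}).
\end{align*}
Integrating in $x_{2}$, combined with the pointwise bound from the first step, reduces matters to bounding $\|\{g_{j_{1},j_{2}}\}\|_{L_{w}^{p}(\ell^{q})}$. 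Applying the symmetric inequality for $\mathcal{M}^{(2)}$ in the $x_{2}$ variable (again with uniform slice $A_{p}$ constant in $x_{1}$) then delivers the claimed estimate.

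The main technical obstacle is the slice property of $A_{p}(X_{1}\times X_{2})$: one must control the one-parameter $A_{p}$ constant on almost every horizontal and vertical slice uniformly. Once this is in hand the rest is a standard iteration, and the extension of the one-parameter weighted Fefferman--Stein inequality from the Euclidean setting to spaces of homogeneous type is well-documented in the references cited in the lemma statement.
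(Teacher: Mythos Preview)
Your proposal is correct and follows the standard iteration argument for the weighted vector-valued strong maximal inequality. Note, however, that the paper does not actually prove this lemma: it is stated as a known result with a reference to \cite{GCRF} and \cite{Sato}, so there is no ``paper's own proof'' to compare against. Your outline---pointwise domination by the iterated one-parameter maximals, the uniform slice $A_{p}$ property obtained by Lebesgue differentiation, and two applications of the one-parameter weighted Fefferman--Stein inequality---is exactly the route taken in those references, so in effect you have reconstructed the cited argument.
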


\section{Proofs of Theorems \ref{main1} and \ref{main2}}

We divide the proof of Theorems \ref{main1} and \ref{main2} into a sequence of lemmas.

\begin{lemma} \label{lkjhh}
Let $\Phi_{1}, \Phi_{2} \in \mathcal{S}(\mathbb{R})$ be even functions.  Let $p \in (0, \infty)$, $w \in A_{\infty}(X_{1} \times X_{2})$,
and $\lambda_{1},\lambda_{2} >\frac{2q_{w}}{\min\{p,2\}}$. Then there exists a constant $C$
such that for all $f \in L^{2}(X_{1}\times X_{2})$,
\begin{align*}
\big\|g^{\ast}_{\Phi_{1},\Phi_{2},L_{1},L_{2},\lambda_{1},\lambda_{2}}(f)\big\|_{L_{w}^{p}(X_{1}\times X_{2})}
\leq C \big\|S_{\Phi_{1},\Phi_{2},L_{1},L_{2}}(f)\big\|_{L_{w}^{p}(X_{1}\times X_{2})}.
\end{align*}
\end{lemma}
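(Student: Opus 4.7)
The plan is to establish a pointwise Fefferman--Stein-type inequality
\begin{equation*}
g^{*}_{\Phi_1, \Phi_2, L_1, L_2, \lambda_1, \lambda_2}(f)(x_1, x_2) \leq C \bigl\{\mathcal{M}_s\bigl[(S_{\Phi_1, \Phi_2, L_1, L_2}f)^r\bigr](x_1, x_2)\bigr\}^{1/r}
\end{equation*}
for some $r \in (0, \min\{p, 2\}]$ with $w \in A_{p/r}(X_1 \times X_2)$, and then appeal to the weighted $L^{p/r}_w$-boundedness of the strong maximal function provided by Lemma \ref{maximal}. The precise choice of $r$ is dictated by, and only possible because of, the hypothesis $\lambda_i > 2q_w/\min\{p, 2\}$.

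To get the pointwise bound, I would first dyadically decompose the $(y_1, y_2, t_1, t_2)$-integration in the definition of $g^{*}$ according to the shells $E_{k_1, k_2} := \{1 + t_i^{-1}\rho_i(x_i, y_i) \sim 2^{k_i},\ i = 1, 2\}$, $k_i \in \mathbb{Z}_{\geq 0}$. On each shell the Peetre factor $(1 + t_i^{-1}\rho_i)^{-n_i\lambda_i}$ contributes $\lesssim 2^{-k_i n_i \lambda_i}$, while the spatial region is contained in a product of cones of aperture $2^{k_i}$. This yields
\begin{equation*}
g^{*}(f)(x_1, x_2)^2 \leq C \sum_{k_1, k_2 \geq 0} 2^{-k_1 n_1 \lambda_1 - k_2 n_2 \lambda_2}\,\bigl[S^{(2^{k_1}, 2^{k_2})} f(x_1, x_2)\bigr]^2,
\end{equation*}
where $S^{(\alpha_1, \alpha_2)}$ is the area function with apertures $\alpha_i \geq 1$ and the same normalization $V(x_i, t_i)^{-1}$ as $S$.

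The key step is the product aperture-change estimate
\begin{equation*}
\bigl[S^{(\alpha_1, \alpha_2)} f(x_1, x_2)\bigr]^r \leq C\, \alpha_1^{n_1} \alpha_2^{n_2}\, \mathcal{M}_s\bigl[(S_{\Phi_1, \Phi_2, L_1, L_2}f)^r\bigr](x_1, x_2), \qquad 0 < r \leq 2,
\end{equation*}
which I would prove by writing $|\Phi_1 \otimes \Phi_2 f(y, t)|^r = (V(y_1, t_1) V(y_2, t_2))^{-1} \iint_{B(y_1, t_1) \times B(y_2, t_2)} |\Phi_1 \otimes \Phi_2 f(y, t)|^r \chi_{\Gamma_1(z_1) \times \Gamma_2(z_2)}(y, t)\, dz$, swapping the $(z, y, t)$-order of integration, and using the inclusions $B(y_i, t_i) \subset B(x_i, 2\alpha_i t_i)$ together with \eqref{tos} to bring in $\mathcal{M}_s$. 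Taking $r/2$-th powers of the dyadic majorization (using $(\sum a_k^2)^{r/2} \leq \sum a_k^r$ for $r \leq 2$) and inserting this estimate turns the $k$-sum into a geometric series in $2^{-k_i n_i(\lambda_i r/2 - 1)}$, which converges iff $\lambda_i r > 2$.

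It remains to choose $r$. The hypothesis $\lambda_i > 2q_w/\min\{p, 2\}$ guarantees the existence of $r \in (0, \min\{p, 2\}]$ satisfying $\lambda_i r > 2$ and $r < p/q_w$ simultaneously, the latter giving $w \in A_{p/r}$; an elementary case check ($p \leq 2$ vs.\ $p > 2$, together with $q_w \geq 1$) shows that the stated hypothesis is exactly the joint feasibility condition. Lemma \ref{maximal} (scalar case) then gives $\|\mathcal{M}_s[(Sf)^r]\|_{L^{p/r}_w} \leq C\|(Sf)^r\|_{L^{p/r}_w}$, whence $\|g^{*}(f)\|_{L^p_w}^r \leq C\|Sf\|_{L^p_w}^r$. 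The main obstacle is the aperture-change estimate: in the product setting it relies in an essential way on $\mathcal{M}_s$ (not a product of one-parameter maximal functions) and on careful use of the reverse doubling bound \eqref{tos} in both variables, but the blueprint is the classical Fefferman--Stein--Torchinsky argument.
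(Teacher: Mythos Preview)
Your proposal is correct and is precisely the ``standard argument'' of Str\"omberg--Torchinsky that the paper cites (\cite[Theorem~4, Ch.~4]{ST}); the paper itself gives no details beyond that reference, so there is nothing further to compare. Your handling of the feasibility of the exponent $r$ (so that $\lambda_i r>2$, $r\le 2$, and $w\in A_{p/r}$ simultaneously) matches exactly the hypothesis $\lambda_i>2q_w/\min\{p,2\}$, and your identification of the product aperture-change estimate as the only nontrivial ingredient is accurate.
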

\begin{proof}
This can be proved by a standard argument; see, for instance, \cite[Theorem 4 in Ch. 4]{ST}. We omit the details here.
\end{proof}

\begin{lemma} \label{8787}
Let $\Phi_{1},\Phi_{2} \in \mathcal{S}(\mathbb{R})$ be even functions.
Let $p \in (0, \infty)$, $\lambda_{1}, \lambda_{2} >0$, and $w$ be arbitrary
weight (i.e., non-negative locally integrable function) on $X_{1} \times X_{2}$.
Then there exists a constant $C$ such that for all  $f \in L^{2}(X_{1} \times X_{2})$,
\begin{align*}
\|S_{\Phi_{1},\Phi_{2},L_{1},L_{2}}(f)\|_{L_{w}^{p}(X_{1}\times X_{2})} \leq C\left\|\left(\int_{0}^{\infty} \int_{0}^{\infty}\left|\big[\Phi_{1}(t_{1}\sqrt{L_{1}})\otimes \Phi_{2}(t_{2}\sqrt{L_{2}})
\big]^{\ast}_{\lambda_{1},\lambda_{2}}f\right|^{2}\frac{dt_{1}}{t_{1}}\frac{dt_{2}}{t_{2}}
\right)^{1/2}\right\|_{L_{w}^{p}(X_{1} \times X_{2})}.
\end{align*}
\end{lemma}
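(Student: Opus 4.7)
The plan is to establish the estimate pointwise and then take the $L^p_w$-norm, so no weight condition or maximal-function machinery is needed. The key observation is that inside the product cone $\Gamma_1(x_1)\times\Gamma_2(x_2)$, the weights $(1+t_i^{-1}\rho_i(x_i,y_i))^{\lambda_i}$ that appear in the denominator of the Peetre type maximal function are bounded above by $2^{\lambda_i}$.

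First I would fix $(x_1,x_2)\in X_1\times X_2$ and $(t_1,t_2)\in(0,\infty)^2$, and consider arbitrary $(y_1,y_2)$ with $\rho_i(x_i,y_i)<t_i$. By the very definition \eqref{Peetre} of the product Peetre type maximal function,
\begin{align*}
\bigl|\Phi_1(t_1\sqrt{L_1})\otimes\Phi_2(t_2\sqrt{L_2})f(y_1,y_2)\bigr|
&\le \prod_{i=1}^{2}\bigl(1+t_i^{-1}\rho_i(x_i,y_i)\bigr)^{\lambda_i}\cdot\bigl[\Phi_1(t_1\sqrt{L_1})\otimes\Phi_2(t_2\sqrt{L_2})\bigr]^{\ast}_{\lambda_1,\lambda_2}f(x_1,x_2)\\
&\le 2^{\lambda_1+\lambda_2}\bigl[\Phi_1(t_1\sqrt{L_1})\otimes\Phi_2(t_2\sqrt{L_2})\bigr]^{\ast}_{\lambda_1,\lambda_2}f(x_1,x_2),
\end{align*}
since $\rho_i(x_i,y_i)<t_i$ forces the factor $1+t_i^{-1}\rho_i(x_i,y_i)$ to be strictly less than $2$.

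Next I would square this bound and insert it into the definition of the product area function, and then carry out the $y_1,y_2$ integrations:
\begin{align*}
S_{\Phi_1,\Phi_2,L_1,L_2}(f)(x_1,x_2)^2
&\le 4^{\lambda_1+\lambda_2}\iint_{\Gamma_1(x_1)\times\Gamma_2(x_2)}\bigl|[\Phi_1\otimes\Phi_2]^{\ast}_{\lambda_1,\lambda_2}f(x_1,x_2)\bigr|^{2}\prod_{i=1}^{2}\frac{d\mu_i(y_i)\,dt_i}{V(x_i,t_i)t_i}\\
&=4^{\lambda_1+\lambda_2}\int_0^{\infty}\!\!\int_0^{\infty}\bigl|[\Phi_1\otimes\Phi_2]^{\ast}_{\lambda_1,\lambda_2}f(x_1,x_2)\bigr|^{2}\prod_{i=1}^{2}\frac{\mu_i(B(x_i,t_i))}{V(x_i,t_i)}\frac{dt_i}{t_i},
\end{align*}
where in the last equality I used that, for fixed $t_i$, the slice $\{y_i:\rho_i(x_i,y_i)<t_i\}=B(x_i,t_i)$ has measure exactly $V(x_i,t_i)$, so each factor $\mu_i(B(x_i,t_i))/V(x_i,t_i)$ equals $1$.

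Finally I would take the square root to obtain the pointwise inequality
\begin{equation*}
S_{\Phi_1,\Phi_2,L_1,L_2}(f)(x_1,x_2)\le 2^{\lambda_1+\lambda_2}\left(\int_0^{\infty}\!\!\int_0^{\infty}\bigl|[\Phi_1(t_1\sqrt{L_1})\otimes\Phi_2(t_2\sqrt{L_2})]^{\ast}_{\lambda_1,\lambda_2}f(x_1,x_2)\bigr|^2\frac{dt_1}{t_1}\frac{dt_2}{t_2}\right)^{1/2},
\end{equation*}
and then take the $L^p_w$-norm of both sides. Since the bound is already pointwise, the weight $w$ enters trivially and the claim follows with $C=2^{\lambda_1+\lambda_2}$. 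There is no real obstacle here; the only subtlety is the book-keeping identification of the cone slice $\{\rho_i(x_i,y_i)<t_i\}$ with the ball $B(x_i,t_i)$, which produces the exact cancellation with the denominator $V(x_i,t_i)$.
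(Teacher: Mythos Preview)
Your proof is correct and follows essentially the same route as the paper: bound $|\Phi_1(t_1\sqrt{L_1})\otimes\Phi_2(t_2\sqrt{L_2})f(y_1,y_2)|$ on the cone slice $B(x_1,t_1)\times B(x_2,t_2)$ by $2^{\lambda_1+\lambda_2}$ times the Peetre maximal function, integrate out the $y_i$-variables against $V(x_i,t_i)^{-1}$, and then take the $L^p_w$-norm of the resulting pointwise inequality. The only cosmetic difference is that the paper phrases the first step via the essential supremum over the ball (consistent with the $\esssup$ in the definition of the Peetre maximal function), whereas you write the bound as if it held for every $(y_1,y_2)$; since you immediately integrate in $(y_1,y_2)$, the a.e.\ exception set is harmless.
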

\begin{proof}
Observe that for all $\lambda_{1}, \lambda_{2}, t_{1}, t_{2} >0$ and all $(x_{1}, x_{2})\in X_{1} \times X_{2}$,
\begin{align*}
&\frac{1}{V(x_{1}, t_{1})V(x_{2},t_{2})}\iint_{B(x_{1},t_{1})\times B(x_{2},t_{2})}\left|\Phi_{1}( t_{1}\sqrt{L_{1}})
\otimes \Phi_{2}( t_{2} \sqrt{L_{2}})f(y_{1},y_{2})\right|^{2}d\mu_{1}(y_{1})d\mu_{2}(y_{2})\\
&\quad\quad\quad\quad\quad\quad \leq \esssup_{(y_{1}, y_{2}) \in B(x_{1},t_{1}) \times B(x_{2},t_{2})}\left|\Phi_{1}( t_{1}\sqrt{L_{1}})
\otimes \Phi_{2}( t_{2}\sqrt{L_{2}}) f(y_{1},y_{2})\right|^{2} \\
& \quad\quad\quad\quad\quad\quad \leq 2^{2\lambda_{1}}2^{2\lambda_{2}}\left|\big[\Phi_{1}( t_{1}\sqrt{L_{1}})
\otimes \Phi_{2}( t_{2}\sqrt{L_{2}})\big]^{\ast}_{\lambda_{1},\lambda_{2}} f(x_{1},x_{2})\right|^{2}.
\end{align*}
Taking the norm $\int_{0}^{\infty}\int_{0}^{\infty}|\cdot| \frac{dt_{1}}{t_{1}}\frac{dt_{2}}{t_{2}}$ on both sides gives
the pointwise estimate
\begin{align*}
\big[S_{\Phi_{1},\Phi_{2}, L_{1}, L_{2}}(f)(x_{1},x_{2})\big]^{2}
\leq 2^{2\lambda_{1}}2^{2\lambda_{2}} \int_{0}^{\infty} \int_{0}^{\infty}
\left|\big[\Phi_{1}(t_{1}\sqrt{L_{1}})\otimes \Phi_{2}(t_{2}\sqrt{L_{2}})
\big]^{\ast}_{\lambda_{1},\lambda_{2}}f(x_{1}, x_{2})\right|^{2} \frac{dt_{1}}{t_{1}}\frac{dt_{2}}{t_{2}},
\end{align*}
which readily yields the desired estimate.
\end{proof}

\begin{lemma} \label{211}
Suppose $\Phi_{1},\Phi_{2},\widetilde{\Phi}_{1}, \widetilde{\Phi}_{2}  \in \mathcal{A}(\mathbb{R})$ are even functions satisfying
\begin{equation*}
\Phi_{1}(0) = \Phi_{2}(0)= \widetilde{\Phi}_{1}(0) = \widetilde{\Phi}_{2}(0) =0.
\end{equation*}
Let $p \in (0, \infty)$, $\lambda_{1},\lambda_{2} >0$, and $w$ be arbitrary weight (i.e., non-negative locally integrable function)
on $X_{1} \times X_{2}$.  Then there exists a constant $C$ such that for all $f \in L^{2}(X_{1}\times X_{2})$,
\begin{equation} \label{poi}
\begin{split}
&\left\| \left(\int_{0}^{\infty}\int_{0}^{\infty}\left|\big[\widetilde{\Phi}_{1}(t_{1}\sqrt{L_{1}})
\otimes \widetilde{\Phi}_{2}(t_{2}\sqrt{L_{2}})\big]^{\ast}_{\lambda_{1},\lambda_{2}}f  \right|^{2}
\frac{dt_{1}}{t_{1}} \frac{dt_{2}}{t_{2}} \right)^{1/2}
\right\|_{L_{w}^{p}(X_{1}\times X_{2})} \\
&\quad \sim \left\| \left(\int_{0}^{\infty}\int_{0}^{\infty}\left|\big[\Phi_{1}(t_{1}\sqrt{L_{1}})\otimes \Phi_{2}(t_{2}\sqrt{L_{2}})
\big]^{\ast}_{\lambda_{1},\lambda_{2}}f  \right|^{2}  \frac{dt_{1}}{t_{1}} \frac{dt_{2}}{t_{2}} \right)^{1/2}
\right\|_{L_{w}^{p}(X_{1}\times X_{2})}.
\end{split}
\end{equation}
\end{lemma}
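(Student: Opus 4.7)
The plan is to establish a pointwise-in-$(x_1,x_2)$ domination of the left-hand side of \eqref{poi} by a two-variable Mellin convolution acting on the right-hand side, and then close by Young's inequality ($L^{1} \ast L^{2} \to L^{2}$) applied once in each scale variable. Because the resulting bound is purely pointwise in space, passing to $\|\cdot\|_{L^{p}_{w}(X_{1}\times X_{2})}$ imposes no restriction on $p \in (0, \infty)$ or on the weight $w$. By the manifest symmetry between $\Phi_{i}$ and $\widetilde{\Phi}_{i}$ (both pairs lie in $\mathcal{A}(\mathbb{R})$ with vanishing at $0$), only one direction needs to be proved.

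First I would build a continuous Calder\'on reproducing formula in each factor. Since $\widetilde{\Phi}_{i} \in \mathcal{A}(\mathbb{R})$, there is $\varepsilon_{i} > 0$ with $\widetilde{\Phi}_{i} \neq 0$ on $\{\varepsilon_{i}/2 < |\lambda| < 2\varepsilon_{i}\}$, and one can pick an even $\widetilde{\Theta}_{i} \in \mathcal{S}(\mathbb{R})$ supported in $\{\varepsilon_{i}/2 \leq |\lambda| \leq 2\varepsilon_{i}\}$ such that $\int_{0}^{\infty} \widetilde{\Phi}_{i}(s)\widetilde{\Theta}_{i}(s)\,ds/s = 1$. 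By the spectral theorem together with Lemma \ref{negle} (which discards the spectral value $\lambda = 0$), for every $f \in L^{2}(X_{1} \times X_{2})$ the identity
\[
f = \iint_{(0,\infty)^{2}} \big[\widetilde{\Phi}_{1}(\tau_{1}\sqrt{L_{1}})\widetilde{\Theta}_{1}(\tau_{1}\sqrt{L_{1}}) \otimes \widetilde{\Phi}_{2}(\tau_{2}\sqrt{L_{2}})\widetilde{\Theta}_{2}(\tau_{2}\sqrt{L_{2}})\big] f \, \frac{d\tau_{1}\,d\tau_{2}}{\tau_{1}\tau_{2}}
\]
holds in $L^{2}(X_{1} \times X_{2})$. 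Applying $\Phi_{1}(t_{1}\sqrt{L_{1}}) \otimes \Phi_{2}(t_{2}\sqrt{L_{2}})$ to both sides and commuting factors via the functional calculus reduces matters to the kernel of $\Phi_{i}(t_{i}\sqrt{L_{i}})\widetilde{\Theta}_{i}(\tau_{i}\sqrt{L_{i}})$. Here Lemma \ref{ODE} provides the decisive estimate: when $t_{i} \geq \tau_{i}$ I take $\widetilde{\Theta}_{i}$ (supported away from $0$, hence with all derivatives vanishing at $0$) as the vanishing factor $\Psi$, which yields the decay $(\tau_{i}/t_{i})^{M}$ for any prescribed $M$; when $t_{i} < \tau_{i}$ I am forced to take $\Phi_{i}$ as $\Psi$, and since $\Phi_{i}$ is even with $\Phi_{i}(0) = 0$ one only has $\Phi_{i}(0) = \Phi_{i}'(0) = 0$, so Lemma \ref{ODE} with $m = 1$ gives only the weaker decay $(t_{i}/\tau_{i})^{2}$.

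Combining these kernel bounds with $|\widetilde{\Phi}_{1} \otimes \widetilde{\Phi}_{2} f(y_{1}, y_{2})| \leq \widetilde{P}^{*}(\tau_{1}, \tau_{2})(x_{1}, x_{2})\prod_{i}(1 + \rho_{i}(x_{i}, y_{i})/\tau_{i})^{\lambda_{i}}$ (where $\widetilde{P}^{*}$ denotes the product Peetre maximal function of $\widetilde{\Phi}_{1} \otimes \widetilde{\Phi}_{2}$), the triangle inequality $\rho_{i}(x_{i}, y_{i}) \leq \rho_{i}(x_{i}, u_{i}) + \rho_{i}(u_{i}, y_{i})$, the elementary bound $(1 + \rho/\tau)^{\lambda_{i}} \leq (t_{i}/\tau_{i})^{\lambda_{i}}(1 + \rho/t_{i})^{\lambda_{i}}$ (valid when $\tau_{i} \leq t_{i}$, producing an extra factor $(t_{i}/\tau_{i})^{\lambda_{i}}$ to be absorbed by choosing $M > \lambda_{i}$), and the integral bound \eqref{inte}, I would derive
\[
\big[\Phi_{1}(t_{1}\sqrt{L_{1}}) \otimes \Phi_{2}(t_{2}\sqrt{L_{2}})\big]^{*}_{\lambda_{1},\lambda_{2}} f(x_{1}, x_{2}) \leq C \iint \gamma_{1}(t_{1}/\tau_{1})\gamma_{2}(t_{2}/\tau_{2})\,\widetilde{P}^{*}(\tau_{1}, \tau_{2})(x_{1}, x_{2})\, \frac{d\tau_{1}\, d\tau_{2}}{\tau_{1}\tau_{2}},
\]
where $\gamma_{i}(s) \lesssim s^{2}$ as $s \to 0^{+}$ and $\gamma_{i}(s) \lesssim s^{-(M-\lambda_{i})}$ as $s \to \infty$; in particular $\gamma_{i} \in L^{1}((0, \infty), ds/s)$ once $M > \lambda_{i}$.

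The final step is Young's inequality for Mellin convolution. Pointwise in $(x_{1}, x_{2})$, the right-hand side above is a two-dimensional multiplicative convolution of $\gamma_{1} \otimes \gamma_{2} \in L^{1}((0, \infty)^{2}, ds_{1}\,ds_{2}/(s_{1}s_{2}))$ against $\widetilde{P}^{*}(\cdot,\cdot)(x_{1},x_{2})$, so applying $L^{1} \ast L^{2} \to L^{2}$ iteratively in each scale variable yields the pointwise-in-$x$ estimate
\[
\Big(\iint \big|[\Phi_{1} \otimes \Phi_{2}]^{*}_{\lambda_{1},\lambda_{2}} f(x_{1}, x_{2})\big|^{2}\, \frac{dt_{1}\, dt_{2}}{t_{1} t_{2}}\Big)^{1/2} \leq C \Big(\iint \big|\widetilde{P}^{*}(\tau_{1}, \tau_{2})(x_{1}, x_{2})\big|^{2}\, \frac{d\tau_{1}\, d\tau_{2}}{\tau_{1}\tau_{2}}\Big)^{1/2},
\]
whence taking $L^{p}_{w}$-norms gives one direction of \eqref{poi}; swapping $\Phi_{i} \leftrightarrow \widetilde{\Phi}_{i}$ gives the other. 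The main obstacle is the bookkeeping in the pointwise estimate: because $\Phi_{i}$ is only assumed to vanish to first order at $0$, the kernel decay degenerates to $(t_{i}/\tau_{i})^{2}$ whenever $\tau_{i} > t_{i}$, and this has to be reconciled with an extra Peetre-weight factor arising in the opposite regime. It is crucial that the two adversities occur in disjoint regimes, and that in the only regime where an extra factor appears ($\tau_{i} \leq t_{i}$) one has the freedom to take $M$ arbitrarily large since $\widetilde{\Theta}_{i}$ is supported away from the origin.
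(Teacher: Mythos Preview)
Your proposal is correct and follows essentially the same strategy as the paper's proof: expand $f$ via a Calder\'on reproducing formula built from one pair of multipliers, apply the other pair, invoke Lemma~\ref{ODE} for almost-orthogonality kernel bounds (with the asymmetry between the $(\tau_i/t_i)^{M}$ and $(t_i/\tau_i)^{2}$ regimes handled exactly as you describe), pass to the Peetre maximal function pointwise in $(x_{1},x_{2})$, and close via a Young-type inequality in the scale variables. The only difference is packaging: the paper uses the \emph{discrete} reproducing formula of Lemma~\ref{uuuuu1} (so the scale variable is split as $t_i\in[1,2]$ times a dyadic shift $2^{-j_i}$) and then applies Lemma~\ref{RY} in $L^{p}_{w}(\ell^{2})$, whereas you keep the reproducing formula continuous and use Young's inequality for the Mellin convolution directly. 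The two are interchangeable; your version avoids the dyadic discretization at the cost of needing to state (rather than cite) the continuous reproducing identity, while the paper's version plugs directly into the prepackaged Lemma~\ref{RY}.
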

\begin{proof}
For $i=1,2$, since $\Phi_{i} \in \mathcal{A}(\mathbb{R})$ and $\Phi_{i}$ is even,
by Lemma \ref{uuuuu1} there exists an even function $\Theta_{i} \in \mathcal{S}(\mathbb{R})$
such that $\supp \Theta_{i} \subset \{\varepsilon_{i}/2 \leq |\lambda|\leq 2\varepsilon_{i}\}$ and
\begin{equation} \label{hpw12}
 \sum_{k =-\infty }^{\infty}\Phi_{i}(2^{-k}{\lambda})
\Theta_{i}(2^{-k}{\lambda})=1 \quad \mbox{for } \lambda \in \mathbb{R} \backslash \{0\},
\end{equation}
where $\varepsilon_{i}$ is the constant in the Tauberian condition \eqref{tauberian}
corresponding to $\Phi_{i}$. Hence it follows from Lemma \ref{negle} and the spectral theorem that
for all $f \in L^{2}(X_{1} \times X_{2})$ and $t_{1},t_{2} \in [1,2]$,
\begin{align*}
f = \sum_{k_{1}=-\infty}^{\infty} \sum_{k_{2}=-\infty}^{\infty}
\big(\Phi_{1}(2^{-k_{1}}t_{1}\sqrt{L_{1}})\Theta_{1}(2^{-k_{1}}t_{1}\sqrt{L_{1}})\big)
\otimes \big(\Phi_{2}(2^{- k_{2}}t_{2}\sqrt{L_{2}})\Theta_{2}(2^{-k_{2}}t_{2}\sqrt{L_{2}})\big)f
\end{align*}
with convergence in the sense of $L^{2}(X_{1} \times X_{2})$ norm.
Consequently, for all $j_{1}, j_{2} \in \mathbb{Z}$, all $t_{1},t_{2} \in [1,2]$ and a.e. $(y_{1}, y_{2}) \in X_{1} \times X_{2}$,
\begin{align} \label{sph2}
&\widetilde{\Phi}_{1}(2^{-j_{1}} t_{1}\sqrt{L_{1}}) \otimes \widetilde{\Phi}_{2}(2^{-j_{2}} t_{2}\sqrt{L_{2}}) f(y_{1},y_{2}) \\
 &=  \sum_{k_{1}=-\infty}^{\infty} \sum_{k_{2}=-\infty}^{\infty}
\big(\widetilde{\Phi}_{1}(2^{-j_{1}} t_{1}\sqrt{L_{1}})\Phi_{1}(2^{-k_{1}}t_{1}
\sqrt{L_{1}})\Theta_{1}(2^{-k_{1}}t_{1}\sqrt{L_{1}}) \big) \nonumber \\
&\quad\quad\quad\quad\quad\quad\quad\quad   \otimes \big(\widetilde{\Phi}_{2}(2^{-j_{2}} t_{2}
\sqrt{L_{2}})\Phi_{2}(2^{-k_{2}}t_{2}\sqrt{L_{2}})\Theta_{2}
(2^{-k_{2}}t_{2}\sqrt{L_{2}}) \big)f (y_{1},y_{2}) \nonumber\\
&  =  \sum_{k_{1}=-\infty}^{\infty} \sum_{k_{2}=-\infty}^{\infty}
\iint_{X_{1}\times X_{2}} K_{ \widetilde{\Phi}_{1}(2^{-j_{1}} t_{1}\sqrt{L_{1}})\Theta_{1}(2^{-k_{1}}t_{1}\sqrt{L_{1}})}(y_{1},z_{1})\nonumber \\
& \quad\quad\quad\quad\quad\quad\quad\quad\quad\quad\quad
\times K_{ \widetilde{\Phi}_{2}(2^{-j_{2}} t_{2}\sqrt{L_{2}})\Theta_{2}(2^{-k_{2}}t_{2}\sqrt{L_{2}}) }(y_{2},z_{2}) \nonumber\\
&\quad \quad \quad\quad\quad  \times \big(\Phi_{1}(2^{-k_{1}} t_{1}\sqrt{L_{1}})  \otimes \Phi_{2}(2^{-k_{2}} t_{2}\sqrt{L_{2}})\big) f(z_{1},z_{2})d\mu_{1}(z_{1})d\mu_{2}(z_{2}). \nonumber
\end{align}
Since $\widetilde{\Phi}_{i}$ is an even function on $\mathbb{R}$, we have $\widetilde{\Phi}_{i}'(0) =0$, for $i=1,2$.
Thus $\widetilde{\Phi}_{i} (0)= \widetilde{\Phi}_{i}'(0) =0$ for $i=1,2$.
On the other hand, since $\Theta_{i}$ vanishes near the origin, we have $\Theta_{i}^{(\nu)}(0)=0$ for every non-negative integer $\nu$.
Hence it follows from Lemma \ref{ODE} that for any positive integer $m$ and any $N >0$,
\begin{equation} \label{sph3}
\begin{split}
&\left|K_{\widetilde{\Phi}_{i}(2^{-j_{i}}t_{i}\sqrt{L_{i}})\Theta_{i}(2^{-k_{i}}t_{i}\sqrt{L_{2}})}(y_{2},z_{2})\right| \\
&\leq \begin{cases}
 C(\widetilde{\Phi}_{i}, \Theta_{i}, N)
 2^{-2|j_{i}-k_{i}|} V(y_{i},2^{-k_{i}}t_{i})^{-1}(1+2^{k_{i}}t_{i}^{-1}\rho_{i}(y_{i},z_{i}))^{-N}, &j_{i} \geq k_{i}, \\
C(\widetilde{\Phi}_{i}, \Theta_{i}, N, m)2^{-m|j_{i}-k_{i}|} V(y_{i},2^{-j_{i}}t_{i})^{-1}(1+2^{j_{i}}t_{i}^{-1}\rho_{i}(y_{i},z_{i}))^{-N},
 &j_{i} < k_{i}.
\end{cases}
\end{split}
\end{equation}
Choose $N \geq \max\{\lambda_{1} +  n_{1} + 1, \lambda_{2} + n_{2} +1\}$, then from \eqref{sph2}, \eqref{sph3} and
the inequality
\begin{align*}
&\left|\big(\Phi_{1}(2^{-k_{1}} t_{1}\sqrt{L_{1}})  \otimes \Phi_{2}(2^{-k_{2}} t_{2}\sqrt{L_{2}})\big) f(z_{1},z_{2})\right| \\
&\leq \big[\Phi_{1}(2^{-k_{1}}t_{1}\sqrt{L_{1}})\otimes \Phi_{2}(2^{-k_{2}}t_{2}\sqrt{L_{2}})
\big]^{\ast}_{\lambda_{1},\lambda_{2}}f (x_{1},x_{2}) \\
&\quad \times (1+2^{k_{1}}t_{1}^{-1}\rho_{1}(x_{1}, z_{1}))^{\lambda_{1}}
(1+2^{k_{2}}t_{2}^{-1}\rho_{2}(x_{2}, z_{2}))^{\lambda_{2}},
\end{align*}
we infer that
\begin{align*}
&\big[\widetilde{\Phi}_{1}(2^{-j_{1}}t_{1}\sqrt{L_{1}})\otimes \widetilde{\Phi}_{2}(2^{-j_{2}}t_{2}\sqrt{L_{2}})
\big]^{\ast}_{\lambda_{1},\lambda_{2}}f (x_{1},x_{2}) \\
& \leq \sum_{k_{1}=-\infty}^{\infty} \sum_{k_{2}=-\infty}^{\infty} \gamma_{j_{1}, k_{1}, j_{2}, k_{2}}
\big[\Phi_{1}(2^{-k_{1}}t_{1}\sqrt{L_{1}})\otimes \Phi_{2}(2^{-k_{2}}t_{2}\sqrt{L_{2}})
\big]^{\ast}_{\lambda_{1},\lambda_{2}}f (x_{1},x_{2}) \\
& \quad \times \esssup_{(y_{1},y_{2}) \in X_{1} \times X_{2}} \iint_{X_{1} \times X_{2}} \frac{  \prod_{i=1}^{2}(1+2^{k_{i}}t_{i}^{-1}\rho_{i}(x_{i}, z_{i}))^{\lambda_{i}}d\mu_{1}(z_{1})d\mu_{2}(z_{2})}{  \prod_{i=1}^{2}(1+2^{j_{i}}t_{i}^{-1}\rho_{i}(x_{i}, y_{i}))^{\lambda_{i}}  (1+2^{j_{i} \wedge k_{i}}t_{i}^{-1}\rho_{i}(y_{i}, z_{i}))^{\lambda_{i} + n_{i} +1} V(y_{i}, 2^{-(j_{i} \wedge k_{i})}t_{i})},
\end{align*}
where $j_{i} \wedge k_{i}:= \min\{j_{i}, k_{i}\}$ and
\begin{align*}
\gamma_{j_{1},k_{1},j_{2}, k_{2}}:=\begin{cases}
2^{-2|j_{1}-k_{1}|} 2^{-2|j_{2} -k_{2}|}  \quad & \mbox{if } j_{1} \geq k_{1} \mbox{ and } j_{2} \geq k_{2}, \\
2^{-2|j_{1}-k_{1}|} 2^{-m|j_{2} -k_{2}|} & \mbox{if } j_{1} \geq k_{1} \mbox{ and } j_{2} < k_{2}, \\
2^{-m|j_{1}-k_{1}|} 2^{-2|j_{2} -k_{2}|} & \mbox{if } j_{1} < k_{1} \mbox{ and } j_{2} \geq k_{2}, \\
2^{-m|j_{1}-k_{1}|} 2^{-m|j_{2} -k_{2}|} & \mbox{if } j_{1} \geq k_{1} \mbox{ and } j_{2} \geq k_{2}.
\end{cases}
\end{align*}
Using \eqref{inte} and the fundamental inequality
\begin{equation*}
(1+2^{k_{i}}t_{i}^{-1}\rho_{i}(x_{i}, z_{i}))^{\lambda_{i}}
\leq  \begin{cases}
 (1+2^{j_{i}}t_{i}^{-1}\rho_{i}(x_{i}, y_{i}))^{\lambda_{i}}  (1+2^{k_{i}}t_{i}^{-1}\rho_{i}(y_{i}, z_{i}))^{\lambda_{i}}, & j_{i}\geq k_{i},\\
2^{(k_{i}-j_{i})\lambda_{i}}(1+2^{j_{i}}t_{i}^{-1}\rho_{i}(x_{i}, y_{i}))^{\lambda_{i}}
(1+2^{j_{i}}t_{i}^{-1}\rho_{i}(y_{i}, z_{i}))^{\lambda_{i}}, \quad & j_{i} < k_{i},
\end{cases}
\end{equation*}
it follows that
\begin{equation} \label{wlg}
\begin{split}
&\big[\widetilde{\Phi}_{1}(2^{-j_{1}}t_{1}\sqrt{L_{1}})\otimes \widetilde{\Phi}_{2}(2^{-j_{2}}t_{2}\sqrt{L_{2}})
\big]^{\ast}_{\lambda_{1},\lambda_{2}}f (x_{1},x_{2}) \\
& \leq \sum_{k_{1}=-\infty}^{\infty} \sum_{k_{2}=-\infty}^{\infty} \gamma_{j_{1},k_{1},j_{2},k_{2}}'
\big[\Phi_{1}(2^{-k_{1}}t_{1}\sqrt{L_{1}})\otimes \Phi_{2}(2^{-k_{2}}t_{2}\sqrt{L_{2}})
\big]^{\ast}_{\lambda_{1},\lambda_{2}}f (x_{1},x_{2}),
\end{split}
\end{equation}
where
\begin{align*}
\gamma_{j_{1},k_{1},j_{2}, k_{2}}'
:=\begin{cases}
2^{-2|j_{1}-k_{1}|} 2^{-2|j_{2} -k_{2}|}  \quad & \mbox{if } j_{1} \geq k_{1} \mbox{ and } j_{2} \geq k_{2}, \\
2^{-2|j_{1}-k_{1}|} 2^{-(m-\lambda_{2})|j_{2} -k_{2}|} & \mbox{if } j_{1} \geq k_{1} \mbox{ and } j_{2} < k_{2}, \\
2^{-(m-\lambda_{1})|j_{1}-k_{1}|} 2^{-2|j_{2} -k_{2}|} & \mbox{if } j_{1} < k_{1} \mbox{ and } j_{2} \geq k_{2}, \\
2^{-(m-\lambda_{1})|j_{1}-k_{1}|} 2^{-(m-\lambda_{2})|j_{2} -k_{2}|} & \mbox{if } j_{1} \geq k_{1} \mbox{ and } j_{2} \geq k_{2}.
\end{cases}
\end{align*}
Now let us choose  $m > \max\{\lambda_{1}, \lambda_{2}\}$ and set $\sigma:= \min\{m- \lambda_{1}, m -\lambda_{2}, 2\}$.
Then \eqref{wlg} implies that
\begin{align*}
&\big[\widetilde{\Phi}_{1}(2^{-j_{1}}t_{1}\sqrt{L_{1}})\otimes \widetilde{\Phi}_{2}(2^{-j_{2}}t_{2}\sqrt{L_{2}})
\big]^{\ast}_{\lambda_{1},\lambda_{2}}f (x_{1},x_{2}) \\
& \leq   \sum_{k_{1}=-\infty}^{\infty} \sum_{k_{2}=-\infty}^{\infty} 2^{-|j_{1}-k_{1}|\sigma}2^{-|j_{2}-k_{2}|\sigma}
\big[\widetilde{\Phi}_{1}(2^{-k_{1}}t_{1}\sqrt{L_{1}})\otimes \widetilde{\Phi}_{2}(2^{-k_{2}}t_{2}\sqrt{L_{2}})
\big]^{\ast}_{\lambda_{1},\lambda_{2}}f (x_{1},x_{2}).
\end{align*}
Taking on both sides the norm $\left(\int_{1}^{2}\int_{1}^{2} |\cdot|^{2}\frac{dt_{1}}{t_{1}}\frac{dt_{2}}{t_{2}}\right)^{1/2}$ and
using Minkowski's inequality, we get
\begin{align*}
&\left(\int_{1}^{2}\int_{1}^{2}
\left\{\big[\widetilde{\Phi}_{1}(2^{-j_{1}}t_{1}\sqrt{L_{1}})\otimes \widetilde{\Phi}_{2}(2^{-j_{2}}t_{2}
\sqrt{L_{2}})\big]^{\ast}_{\lambda_{1},\lambda_{2}}f (x_{1},x_{2})  \right\}^{2}\frac{dt_{1}}{t_{1}}\frac{dt_{2}}{t_{2}} \right)^{1/2}\\
&\leq C\sum_{k_{1}=-\infty}^{\infty} \sum_{k_{2}=-\infty}^{\infty}
2^{-|j_{1}-k_{1}|\sigma}2^{-|j_{2}-k_{2}|\sigma}\\
&\quad\quad \times\left(\int_{1}^{2}\int_{1}^{2} \left\{\big[\Phi_{1}
(2^{-k_{1}}t_{1}\sqrt{L_{1}})\otimes \Phi_{2}(2^{-j_{2}}t_{2}\sqrt{L_{2}})
\big]^{\ast}_{\lambda_{1},\lambda_{2}}f (x_{1},x_{2}) \right\}^{2}\frac{dt_{1}}{t_{1}}\frac{dt_{2}}{t_{2}} \right)^{1/2}.
\end{align*}
Finally, applying Lemma \ref{RY} in $L^{p}(\ell^{2})$ yields
\begin{equation} \label{70692}
\begin{split}
&\left\| \left(\int_{0}^{\infty}\int_{0}^{\infty}\left|\big[\widetilde{\Phi}_{1}(t_{1}\sqrt{L_{1}})
\otimes \widetilde{\Phi}_{2}(t_{2}\sqrt{L_{2}})\big]^{\ast}_{\lambda_{1},\lambda_{2}}f  \right|^{2}
\frac{dt_{1}}{t_{1}} \frac{dt_{2}}{t_{2}} \right)^{1/2}
\right\|_{L_{w}^{p}(X_{1}\times X_{2})} \\
&\quad \leq C \left\| \left(\int_{0}^{\infty}\int_{0}^{\infty}\left|\big[\Phi_{1}(t_{1}\sqrt{L_{1}})\otimes \Phi_{2}(t_{2}\sqrt{L_{2}})
\big]^{\ast}_{\lambda_{1},\lambda_{2}}f  \right|^{2}  \frac{dt_{1}}{t_{1}} \frac{dt_{2}}{t_{2}} \right)^{1/2}
\right\|_{L_{w}^{p}(X_{1}\times X_{2})}.
\end{split}
\end{equation}
By symmetry, the converse inequality of \eqref{70692} also holds. The proof of the lemma is complete.
\end{proof}

\begin{lemma} \label{central}
Let $\Phi_{1},\Phi_{2} \in \mathcal{A}(\mathbb{R})$ be even functions.
Then for any $r>0$, $\sigma  >0$, $\lambda_{1} >D_{1}/2$ and $\lambda_{2} > D_{2}/2$,
there exists a constant $C$ such that for all $f \in L^{2}(X_{1} \times X_{2})$,
all $(x_{1},x_{2}) \in X_{1} \times X_{2}$ and all $t_{1},t_{2} \in [1,2]$,
\begin{align} \label{cen}
&\left\{\big[\Phi_{1}(2^{-j_{1}}t_{1}\sqrt{L_{1}})\otimes \Phi_{2}(2^{-j_{2}}t_{2}\sqrt{L_{2}})
\big]^{\ast}_{\lambda_{1},\lambda_{2}}f(x_{1}, x_{2}) \right\}^{r} \\
&\leq C \sum_{k_{1}=j_{1} }^{\infty}\sum_{k_{2}=j_{2}}^{\infty}2^{(j_{1}-k_{1})\sigma}2^{(j_{2}-k_{2})\sigma}  \nonumber\\
& \quad \times \iint_{X_{1}\times X_{2}}
\frac{\left|\Phi_{1}(2^{-k_{1}}t_{1}\sqrt{L_{1}})\otimes \Phi_{2}(2^{-k_{2}}t_{2}\sqrt{L_{2}})
f(z_{1},z_{2})\right|^{r}d\mu_{1}(z_{1})d\mu_{2}(z_{2})}{V(z_{1},2^{-k_{1}}t_{1})(1 + 2^{k_{1}}t_{1}^{-1}\rho(x_{1},z_{1}))^{\lambda_{1} r}
V(z_{2},2^{-k_{2}}t_{2})(1 + 2^{k_{2}}t_{2}^{-1}\rho_{2}(x_{2},z_{2}))^{\lambda_{2} r}}. \nonumber
\end{align}
\end{lemma}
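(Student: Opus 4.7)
The plan is to extend Rychkov's sub-mean value argument from the one-parameter setting used in \cite{Hu} to the two-parameter product case. Fix $t_1, t_2 \in [1,2]$ and $(x_1, x_2) \in X_1 \times X_2$, and write $G_{j_1, j_2}(x_1, x_2)$ for the LHS of \eqref{cen} without the exponent $r$. For each $i = 1, 2$, Lemma \ref{uuuuu} applied to $\Phi_i$ and rescaled to spectral argument $2^{-j_i}t_i\sqrt{L_i}$ produces the identity
\begin{equation*}
I = \Psi_i(2^{-j_i}t_i\sqrt{L_i})\Upsilon_i(2^{-j_i}t_i\sqrt{L_i}) + \sum_{k_i=1}^{\infty} \Phi_i(2^{-(2k_i+j_i)}t_i\sqrt{L_i})\Theta_i(2^{-(2k_i+j_i)}t_i\sqrt{L_i}),
\end{equation*}
in which every active scale $2^{-(2k_i+j_i)}t_i$ is finer than $2^{-j_i}t_i$ and $\Theta_i$ vanishes identically near $0$, giving cancellation of all orders. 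Inserting this identity on each factor of $\Phi_1(2^{-j_1}t_1\sqrt{L_1})\otimes \Phi_2(2^{-j_2}t_2\sqrt{L_2}) f(y_1, y_2)$ writes it as a double sum of integrals against product kernels of the compositions $\Phi_i(2^{-j_i}t_i\sqrt{L_i})\Theta_i(2^{-(2k_i+j_i)}t_i\sqrt{L_i})$, acting on $F_{k_1, k_2}(z_1, z_2) := \Phi_1(2^{-(2k_1+j_1)}t_1\sqrt{L_1})\otimes \Phi_2(2^{-(2k_2+j_2)}t_2\sqrt{L_2}) f(z_1, z_2)$ (plus boundary terms from the $\Psi_i\Upsilon_i$ pieces, associated to the slot $k_i = 0$).

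The key kernel estimate comes from Lemma \ref{ODE}: since $\Theta_i$ sits at the finer scale with cancellation of all orders, for any $M, N > 0$ we obtain
\begin{equation*}
\bigl|K_{\Phi_i(2^{-j_i}t_i\sqrt{L_i})\Theta_i(2^{-(2k_i+j_i)}t_i\sqrt{L_i})}(y_i, z_i)\bigr| \leq C\, 2^{-2k_i M}\, V(y_i, 2^{-j_i}t_i)^{-1}\bigl(1 + 2^{j_i}t_i^{-1}\rho_i(y_i, z_i)\bigr)^{-N}.
\end{equation*}
Standard bookkeeping --- \eqref{hom} to rewrite the volume as $V(z_i, 2^{-(2k_i+j_i)}t_i)^{-1}$ at the cost of $2^{2k_i n_i}$, \eqref{tos} to transfer the $\rho$-weight between $(y_i, z_i)$ and $(x_i, z_i)$, and the quasi-triangle inequality to generate the Peetre denominator $(1 + 2^{j_i}t_i^{-1}\rho_i(x_i, y_i))^{\lambda_i}$ cancelling the one defining $G_{j_1, j_2}$ --- converts these into bounds at the desired fine scale $2^{-(2k_i+j_i)}t_i$, with all losses absorbed by taking $M, N$ large. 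To introduce the exponent $r$, I factor $|F_{k_1, k_2}| = |F_{k_1, k_2}|^r \cdot |F_{k_1, k_2}|^{1-r}$ and dominate the $(1-r)$-th power pointwise by $G_{2k_1+j_1, 2k_2+j_2}(x_1, x_2)^{1-r}$ times the Peetre weights at fine scale; the hypothesis $\lambda_i > D_i/2$, together with \eqref{tos}--\eqref{inte}, ensures the ensuing $z_i$-integrals converge.

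After taking the essential supremum over $(y_1, y_2)$, reindexing $2k_i + j_i \mapsto k_i$, and absorbing the $\Psi_i\Upsilon_i$ boundary terms and odd-index gaps into the $k_i = j_i$ slot, I arrive at the pre-iteration inequality
\begin{equation*}
G_{j_1, j_2}(x_1, x_2) \leq C \sum_{k_1 = j_1}^{\infty}\sum_{k_2 = j_2}^{\infty} 2^{-(k_1 - j_1)\sigma_0}2^{-(k_2 - j_2)\sigma_0}\, b_{k_1, k_2}(x_1, x_2)\, G_{k_1, k_2}(x_1, x_2)^{1-r},
\end{equation*}
where $b_{k_1, k_2}(x_1, x_2)$ is the integrand structure on the RHS of \eqref{cen} at indices $(k_1, k_2)$ and $\sigma_0$ may be taken as large as needed. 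The growth hypothesis \eqref{267} on $\{G_{j_1, j_2}\}$ follows from $f \in L^2(X_1 \times X_2)$ together with the $L^2 \to L^\infty$ bound for $\Phi_i(2^{-j_i}t_i\sqrt{L_i})$ provided by Lemma \ref{smooth} and Cauchy--Schwarz. Lemma \ref{lpm} then converts the pre-iteration inequality into \eqref{cen} with $\sigma := \sigma_0 r$. The main obstacle I anticipate is the smooth consolidation of the Lemma \ref{uuuuu} boundary and index-parity contributions into the clean single sum over $k_i \geq j_i$ required by Lemma \ref{lpm}; in the two-parameter setting there are four cross-types (boundary versus bulk in each coordinate), and the rapid geometric factor $2^{-2k_i M}$ must be shown to dominate every loss arising from the two-fold reindexing without leaking into polynomial growth in the other coordinate.
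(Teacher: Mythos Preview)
Your outline matches the paper's argument for $0<r\le 1$: the Calder\'on reproducing formula from Lemma~\ref{uuuuu}, the kernel bound from Lemma~\ref{ODE}, the factorization $|F|=|F|^{r}|F|^{1-r}$ with the $(1-r)$-power dominated by the Peetre maximal function, the $L^{2}\to L^{\infty}$ a priori growth estimate, and finally Lemma~\ref{lpm} to close the iteration.  (The ``odd-index gap'' you worry about is an artifact of what appears to be a typo in the statement of Lemma~\ref{uuuuu}; the paper's own proof of Lemma~\ref{central} invokes the identity with $2^{-k_{i}}$ rather than $2^{-2k_{i}}$, so you may simply use that and the reindexing becomes $k_{i}+j_{i}\mapsto k_{i}$ with no parity issue.)

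There is, however, a genuine gap: your argument does not cover the range $r>1$, which the lemma claims and which is actually used later (Lemma~\ref{lkjh} applies the case $r=2$).  Two steps break down when $r>1$.  First, the pointwise domination
\[
|F_{k_{1},k_{2}}(z_{1},z_{2})|^{1-r}\le G_{k_{1},k_{2}}(x_{1},x_{2})^{1-r}\prod_{i}(1+2^{k_{i}}t_{i}^{-1}\rho_{i}(x_{i},z_{i}))^{\lambda_{i}(1-r)}
\]
reverses direction once $1-r<0$.  Second, Lemma~\ref{lpm} is stated only for $0<r\le 1$.  The paper treats $r>1$ by a separate, direct computation that bypasses the iteration entirely: starting from the same pointwise bound on $|\Phi_{1}\otimes\Phi_{2}f(y_{1},y_{2})|$ (inequality~\eqref{ftqkf}), one chooses $N_{i}\ge\lambda_{i}+(D_{i}+n_{i}+1)/r'$, applies H\"older's inequality to the $z$-integral (the $r'$-piece of the weight is integrable by \eqref{tos}--\eqref{inte}), applies H\"older again to the $k$-sum, raises both sides to the $r$-th power, and then divides by the Peetre denominator and takes the supremum in $y$.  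No appeal to Lemma~\ref{lpm} is needed.  You should add this case.
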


\begin{proof}
By Lemma \ref{uuuuu}, for $i=1,2$
there exist even functions $\Psi_{i}, \Upsilon_{i}, \Theta_{i} \in \mathcal{S}(\mathbb{R})$
such that $\supp \Upsilon_{i} \subset \{|\lambda| \leq 2\varepsilon_{i}\}$,
$\supp \Theta_{i} \subset \{\varepsilon_{i}/2 \leq |\lambda|\leq 2\varepsilon_{i}\}$, and
\begin{equation} \label{hpw1}
\Psi_{i}(\lambda)\Upsilon_{i}(\lambda)+ \sum_{k_{i}=1}^{\infty}\Phi_{i}(2^{-k_{i}}\lambda)
\Theta_{i}(2^{-k_{i}}\lambda)=1 \quad \mbox{for all } \lambda \in \mathbb{R},
\end{equation}
where $\varepsilon_{i}$ is the constant in the Tauberian condition \eqref{tauberian}
corresponding to $\Phi_{i}$. Replacing $\lambda$ with $2^{-j_{i}}t_{i} \lambda$ in \eqref{hpw1},
we see that for all $j_{i} \in \mathbb{Z}$ and $t_{i}\in [1,2]$,
\begin{equation*}
\Psi_{i}(2^{-j_{i}}t_{i}\lambda)\Upsilon_{i}(2^{-j_{i}}t_{i}\lambda) +
\sum_{k_{i}=1}^{\infty}\Phi_{i}(2^{-(k_{i}+ j_{i})}t_{i}\lambda)\Theta_{i}(2^{-(k_{i}+j_{i})}t_{i}\lambda)=1.
\end{equation*}
It then follows from the spectral theorem that for all $f \in L^{2}(X_{1} \times X_{2})$, all $j_{1}, j_{2} \in \mathbb{Z}$
and all $t_{1},t_{2} \in [1,2]$,
\begin{align*}
f &= \big(\Psi_{1}(2^{-j_{1}}t_{1}\sqrt{L_{1}})\Upsilon_{1}(2^{-j_{1}}t_{1}\sqrt{L_{1}}) \big)
\otimes   \big(\Psi_{2}(2^{-j_{2}}t_{2}\sqrt{L_{2}})\Upsilon_{2}(2^{-j_{2}}t_{2}\sqrt{L_{2}} )\big)f \\
&\quad  + \sum_{k_{1}=1}^{\infty}\big(\Phi_{1}(2^{-(k_{1}+j_{1})}t_{1}\sqrt{L_{1}})\Theta_{1}(2^{-(k_{1}+j_{1})}t_{1}\sqrt{L_{1}}) \big)
\otimes  \big(\Psi_{2}(2^{-j_{2}}t_{2}\sqrt{L_{2}})\Upsilon_{2}(2^{-j_{2}}t_{2}\sqrt{L_{2}} )\big)f \\
&\quad + \sum_{k_{2}=1}^{\infty} \big(\Psi_{1}(2^{-j_{1}}t_{1}\sqrt{L_{1}})\Upsilon_{1}(2^{-j_{1}}t_{1}\sqrt{L_{1}}) \big)
\otimes   \big(\Phi_{2}(2^{- (k_{2}+ j_{2})}t_{2}\sqrt{L_{2}})\Theta_{2}(2^{-(k_{2} + j_{2})}t_{2}\sqrt{L_{2}}) \big)f\\
&\quad + \sum_{k_{1}=1}^{\infty} \sum_{k_{2}=1}^{\infty}
\big(\Phi_{1}(2^{-(k_{1}+j_{1})}t_{1}\sqrt{L_{1}})\Theta_{1}(2^{-(k_{1}+j_{1})}t_{1}\sqrt{L_{1}}) \big)
\otimes \big(\Phi_{2}(2^{- (k_{2}+ j_{2})}t_{2}\sqrt{L_{2}})\Theta_{2}(2^{-(k_{2} + j_{2})}t_{2}\sqrt{L_{2}}) \big)f
\end{align*}
with convergence in the sense of $L^{2}(X_{1} \times X_{2})$ norm. Hence, for all $j_{1}, j_{2}\in \mathbb{Z}$
and  a.e. $(y_{1},y_{2})\in X_{1}\times X_{2}$, we have
\begin{align} \label{sph}
&  \Phi_{1}(2^{-j_{1}} t_{1}\sqrt{L_{1}})
\otimes \Phi_{2}(2^{-j_{2}} t_{2}\sqrt{L_{2}}) f(y_{1},y_{2}) \\
&=\big(\Phi_{1}(2^{-j_{1}} t_{1} \sqrt{L_{1}}) \Psi_{1}(2^{-j_{1}}t_{1}\sqrt{L_{1}})
\Upsilon_{1}(2^{-j_{1}}t_{1}\sqrt{L_{1}}) \big)\nonumber\\
&\quad\quad\quad\quad\quad\quad
\otimes \big(\Phi_{2}(2^{-j_{2}} t_{2}\sqrt{L_{2}})\Psi_{2}(2^{-j_{2}}t_{2}\sqrt{L_{2}})
\Upsilon_{2}(2^{-j_{2}}t_{2}\sqrt{L_{2}})\big)f(y_{1},y_{2}) \nonumber\\
&\quad  + \sum_{k_{1}=1}^{\infty}\big(\Phi_{1}(2^{-j_{1}} t_{1} \sqrt{L_{1}}) \Phi_{1}(2^{-(k_{1}+j_{1})}t_{1}\sqrt{L_{1}})\Theta_{1}(2^{-(k_{1}+j_{1})}t_{1}\sqrt{L_{1}}) \big)\nonumber\\
&\quad\quad\quad\quad\quad \otimes  \big(\Phi_{2}(2^{-j_{2}} t_{2}\sqrt{L_{2}})\Psi_{2}(2^{-j_{2}}t_{2}\sqrt{L_{2}})\Upsilon_{2}(2^{-j_{2}}t_{2}\sqrt{L_{2}} )\big)f(y_{1},y_{2}) \nonumber\\
&\quad + \sum_{k_{2}=1}^{\infty} \big(\Phi_{1}(2^{-j_{1}} t_{1} \sqrt{L_{1}}) \Psi_{1}(2^{-j_{1}}t_{1}\sqrt{L_{1}})\Upsilon_{1}(2^{-j_{1}}t_{1}\sqrt{L_{1}}) \big)\nonumber\\
& \quad\quad\quad\quad\quad \otimes   \big(\Phi_{2}(2^{-j_{2}} t_{2}\sqrt{L_{2}})\Phi_{2}(2^{- (k_{2}+ j_{2})}t_{2}\sqrt{L_{2}})\Theta_{2}(2^{-(k_{2} + j_{2})}t_{2}\sqrt{L_{2}}) \big)f\nonumber\\
&\quad + \sum_{k_{1}=1}^{\infty} \sum_{k_{2}=1}^{\infty}
\big(\Phi_{1}(2^{-j_{1}} t_{1}\sqrt{L_{1}})\Phi_{1}(2^{-(k_{1}+ j_{1})}t_{1}\sqrt{L_{1}})
\Theta_{1}(2^{-(k_{1}+j_{1})}t_{1}\sqrt{L_{1}}) \big)\nonumber\\
&\quad\quad\quad\quad\quad\quad
 \otimes \big(\Phi_{2}(2^{-j_{2}} t_{2}\sqrt{L_{2}})\Phi_{2}(2^{-(k_{2}+j_{2})}t_{2}\sqrt{L_{2}})
 \Theta_{2}(2^{-(k_{2} +j_{2})}t_{2}\sqrt{L_{2}}) \big)f (y_{1},y_{2})\nonumber\\
&  = \iint_{X_{1}\times X_{2}} K_{\Psi_{1}(2^{-j_{1}}t_{1}\sqrt{L_{1}})\Upsilon_{1}(2^{-j_{1}}t_{1}\sqrt{L_{1}}) }(y_{1},z_{1})
 K_{\Psi_{2}(2^{-j_{2}}t_{2}\sqrt{L_{2}})\Upsilon_{2}(2^{-j_{2}}t_{2}\sqrt{L_{2}})}(y_{2},z_{2}) \nonumber\\
&\quad\quad\quad\quad\quad\quad
 \times \big(\Phi_{1}(2^{-(0+j_{1})} t_{1}\sqrt{L_{1}}) \otimes \Phi_{2}(2^{-(0+j_{2})} t_{2}\sqrt{L_{1}})
 \big)f(z_{1},z_{2})d\mu(z_{1})d\mu(z)\nonumber\\
 & \quad  + \sum_{k_{1}=1}^{\infty}\iint_{X_{1}\times X_{2}} K_{\Phi_{1}(2^{-j_{1}}t_{1}\sqrt{L_{1}})\Theta_{1}(2^{-(k_{1}+j_{1})}t_{1}\sqrt{L_{1}}) }(y_{1},z_{1})
 K_{\Psi_{2}(2^{-j_{2}}t_{2}\sqrt{L_{2}})\Upsilon_{2}(2^{-j_{2}}t_{2}\sqrt{L_{2}})}(y_{2},z_{2})\nonumber \\
&\quad\quad\quad\quad\quad\quad
 \times \big(\Phi_{1}(2^{-(k_{1}+j_{1})} t_{1}\sqrt{L_{1}}) \otimes \Phi_{2}(2^{-(0+j_{2})} t_{2}\sqrt{L_{1}})
 \big)f(z_{1},z_{2})d\mu(z_{1})d\mu(z)\nonumber\\
  & \quad  + \sum_{k_{2}=1}^{\infty}\iint_{X_{1}\times X_{2}} K_{\Psi_{1}(2^{-j_{1}}t_{1}\sqrt{L_{1}})\Upsilon_{1}(2^{-j_{1}}t_{1}\sqrt{L_{1}}) }(y_{1},z_{1})
 K_{\Phi_{2}(2^{-j_{2}}t_{2}\sqrt{L_{2}})\Theta_{2}(2^{-(k_{2}+j_{2})}t_{2}\sqrt{L_{2}})}(y_{2},z_{2}) \nonumber\\
&\quad\quad\quad\quad\quad\quad
 \times \big(\Phi_{1}(2^{-(0+j_{1})} t_{1}\sqrt{L_{1}}) \otimes \Phi_{2}(2^{-(k_{2}+j_{2})} t_{2}\sqrt{L_{1}})
 \big)f(z_{1},z_{2})d\mu(z_{1})d\mu(z)\nonumber\\
&\quad + \sum_{k_{1}=1}^{\infty} \sum_{k_{2}=1}^{\infty}
\iint_{X_{1}\times X_{2}} K_{ \Phi_{1}(2^{-j_{1}} t_{1}\sqrt{L_{1}})\Theta_{1}(2^{-(k_{1}+j_{1})}t_{1}\sqrt{L_{1}})}(y_{1},z_{1})\nonumber\\
&\quad\quad\quad\quad\quad\quad\quad\quad\quad \times
 K_{ \Phi_{2}(2^{-j_{2}} t_{2}\sqrt{L_{2}})\Theta_{2}(2^{-(k_{2} + j_{2})}t_{2}\sqrt{L_{2}}) }(y_{2},z_{2})\nonumber \\
&\quad\quad\quad\quad\quad\quad\quad\quad\quad
 \times \big(\Phi_{1}(2^{-(k_{1}+j_{1})} t_{1}\sqrt{L_{1}})
\otimes \Phi_{2}(2^{-(k_{2}+j_{2})} t_{2}\sqrt{L_{2}}) \big)f(z_{1},z_{2})d\mu(z_{1})d\mu(z).\nonumber
\end{align}

For $i=1,2$, let $N_{i} \geq \lambda_{i}$ and $m_{i}$ be any integer such that $m_{i} - \lambda_{i} -n_{i}/r >0$.
Since $\Theta_{i}$ vanishes near the origin, it follows from Lemma \ref{ODE} that
there exists a constant $C=C(\Phi_{i},\Theta_{i}, m_{i}, N_{i})$
such that for all $j_{i} \in \mathbb{Z}$, all $k_{i} \in \{1,2, \cdots\}$, and all $t_{i} \in [1,2]$,
\begin{equation} \label{ker1}
\big|K_{ \Phi_{i}(2^{-j_{i}} t_{i}\sqrt{L_{1}})\Theta_{i}(2^{-(k_{i}+j_{i})}t_{i}\sqrt{L_{i}})  }(y_{i},z_{i})\big|
\leq C  2^{-k_{i}m_{i}}V(z_{i},2^{-j_{i}}t_{i})^{-1}(1+2^{j_{i}}t_{i}^{-1}\rho_{i}(y_{i},z_{i}))^{-N_{i}}.
\end{equation}
Analogously, for $i=1,2$, we have
\begin{equation} \label{ker2}
\big| K_{\Psi_{i}(2^{-j_{i}}t_{i}\sqrt{L_{i}})\Upsilon_{i}(2^{-j_{i}}t_{i}\sqrt{L_{i}})}(y_{i},z_{i})\big|
\leq CV(z_{i},2^{-j_{i}}t_{i})^{-1}(1+2^{j_{i}}t_{i}^{-1}\rho_{i}(y_{i},z_{i}))^{-N_{i}}.
\end{equation}
Putting \eqref{ker1} and \eqref{ker2} into \eqref{sph}, we obtain
\begin{align}\label{ftqkf}
&\left|\Phi_{1}(2^{-j_{1}} t_{1}\sqrt{L_{1}}) \otimes \Phi_{2}(2^{-j_{2}} t_{2}\sqrt{L_{2}}) f(y_{1},y_{2})\right|\\
&\leq C\sum_{k_{1}=0}^{\infty}\sum_{k_{2}=0}^{\infty} 2^{-k_{1}m_{1} }2^{-k_{2}m_{2}} \nonumber \\
&\quad\quad \times \iint_{X_{1} \times X_{2}}  \frac{\left|\Phi_{1}(2^{-(k_{1}+j_{1})} t_{1}\sqrt{L_{1}})
\otimes \Phi_{2}(2^{-(k_{2}+j_{2})} t_{2}\sqrt{L_{2}})f(z_{1},z_{2})\right|}
{\prod_{i=1}^{2}V(z_{i},2^{-j_{i}}t_{i})(1+2^{j_{i}}t_{i}^{-1}\rho_{i}(y_{i},z_{i}))^{N_{i}}}
 d\mu_{1}(z_{1})d\mu_{2}(z_{2}) \nonumber \\
& = C\sum_{k_{1}=j_{1}}^{\infty}\sum_{k_{2}=j_{2}}^{\infty} 2^{(j_{1}-k_{1})m_{1} }2^{(j_{2}-k_{2})m_{2}}\nonumber  \\
&\quad\quad \times
\iint_{X_{1} \times X_{2}}\frac{\left|\Phi_{1}(2^{-k_{1}}t_{1}\sqrt{L_{1}})
\otimes \Phi_{2}(2^{-k_{2}} t_{2}\sqrt{L_{2}})f(z_{1},z_{2})\right|}
{\prod_{i=1}^{2}V(z_{i},2^{-j_{i}}t_{i})(1+2^{j_{i}}t_{i}^{-1}\rho_{i}(y_{i},z_{i}))^{N_{i}}}
d\mu_{1}(z_{1})d\mu_{2}(z_{2}).\nonumber
\end{align}

To prove the desired inequality, we first consider the case $0< r \leq 1$.
Dividing both sides of \eqref{ftqkf} by $(1 +2^{j_{1}}t_{1}^{-1}\rho_{1}(x_{1},y_{1}))^{\lambda_{1}}
(1 +2^{j_{2}}t_{2}^{-1}\rho_{2}(x_{2},y_{2}))^{\lambda_{2}}$,
taking the supremum over $(y_{1},y_{2}) \in X_{1} \times X_{2}$ in the left-hand side, and using the inequalities
$V(z,2^{-j_{i}}t_{i}) \geq V(z_{i},2^{-k_{i}}t_{i})$ ($\forall k_{i} \geq j_{i}$)
and $(1 +2^{j_{i}}t_{i}^{-1}\rho_{i}(x_{i},y_{i}))(1 +2^{j_{i}}t_{i}^{-1}
\rho_{i}(y_{i}, z_{i})) \geq (1 +2^{j_{i}}t_{i}^{-1}\rho_{i}(x_{i},z_{i}))$ ($\forall
t_{i} \in [1,2]$) in the right-hand side, we get that, for all $t_{i} \in [1,2]$ and $x_{i} \in X_{i}$,
\begin{align} \label{shibian}
&\big[\Phi_{1}(2^{-j_{1}}t_{1}\sqrt{L_{1}})\otimes \Phi_{2}(2^{-j_{2}}t_{2}\sqrt{L_{2}})
\big]^{\ast}_{\lambda_{1},\lambda_{2}}f(x_{1}, x_{2})\\
&\leq C\sum_{k_{1}=j_{1} }^{\infty}\sum_{k_{2} =j_{2}}^{\infty} 2^{(j_{1}-k_{1})m_{1}}
2^{(j_{2}-k_{2})m_{2}}  \nonumber\\
& \quad\quad \times \iint_{X_{1} \times X_{2}}   \frac{\left|\Phi_{1}(2^{-k_{1}}
 t_{1}\sqrt{L_{1}}) \otimes \Phi_{2}(2^{-k_{2}} t_{2}\sqrt{L_{2}})f(z_{1},z_{2})\right|}
{\prod_{i=1}^{2}V(z_{i},2^{-k_{i}}t_{i})(1+2^{j_{i}}t_{i}^{-1}\rho_{i}(x_{i},z_{i}))^{\lambda_{i}}}
d\mu_{1}(z_{1})d\mu_{2}(z_{2}). \nonumber
\end{align}
To proceed further, we note that
\begin{align} \label{hanshu}
& \big|\Phi_{1}(2^{-k_{1}}
 t_{1}\sqrt{L_{1}}) \otimes \Phi_{2}(2^{-k_{2}} t_{2}\sqrt{L_{2}}) f(z_{1},z_{2})\big|\\
& \leq \big|\Phi_{1}(2^{-k_{1}} t_{1}\sqrt{L_{1}}) \otimes \Phi_{2}(2^{-k_{2}} t_{2}
\sqrt{L_{2}}) f(z_{1},z_{2})\big|^{r}\nonumber \\
&\quad \times \left\{\big[\Phi_{1}(2^{-k_{1}}t_{1}\sqrt{L_{1}})\otimes \Phi_{2}(2^{-k_{2}}t_{2}
\sqrt{L_{2}})\big]^{\ast}_{\lambda_{1},\lambda_{2}}f(x_{1}, x_{2})\right\}^{1-r}\nonumber\\
&\quad \times (1 +2^{k_{1}}t_{1}^{-1}\rho_{1}(x_{1},z_{1}))^{\lambda_{1}(1-r)}(1 +2^{k_{2}}t_{2}^{-1}\rho_{2}
(x_{2},z_{2}))^{\lambda_{2}(1-r)}.\nonumber
\end{align}
From \eqref{shibian}, \eqref{hanshu}, and the inequality
\begin{align*}
(1 +2^{k_{i}}t_{i}^{-1}\rho_{i}(x_{i},z_{i}))^{\lambda_{i}}
\leq 2^{(k_{i}-j_{i})\lambda_{i}}(1+2^{j_{i}}t_{i}^{-1}\rho_{i}(x_{i},z_{i}))^{\lambda_{i}} \quad (\forall k_{i} \geq j_{i}, ~\forall t_{i} \in [1,2]),
\end{align*}
it follows that
\begin{align} \label{317}
&\big[\Phi_{1}(2^{-j_{1}}t_{1}\sqrt{L_{1}})\otimes \Phi_{2}(2^{-j_{2}}t_{2}\sqrt{L_{2}})
\big]^{\ast}_{\lambda_{1},\lambda_{2}}f(x_{1}, x_{2}) \\
& \leq C\sum_{k_{1}= j_{1} }^{\infty}\sum_{k_{2} = j_{2}}^{\infty} 2^{(j_{1}- k_{1})(m_{1}-\lambda_{1})}
2^{(j_{2}- k_{2})(m_{2}-\lambda_{2})}\nonumber \\
&\quad\quad \times \iint_{X_{1}\times X_{2}} \frac{\left|\Phi_{1}(2^{-k_{1}}
 t_{1} \sqrt{L_{1}}) \otimes \Phi_{2}(2^{-k_{2}} t_{2} \sqrt{L_{2}}) f(z_{1},z_{2})\right|^{r}}{\prod_{i=1}^{2}V(z_{i},2^{-k_{i}}t_{i})
 (1+2^{k_{i}}t_{i}^{-1}\rho_{i}(x_{i},z_{i}))^{\lambda_{i} r}}
 d\mu_{1}(z_{1})d\mu_{2}(z_{2}) \nonumber\\
& \quad\quad\times \left\{ \big[\Phi_{1}(2^{-k_{1}}t_{1}\sqrt{L_{1}})\otimes \Phi_{2}(2^{-k_{2}}t_{2}\sqrt{L_{2}})
\big]^{\ast}_{\lambda_{1},\lambda_{2}}f(x_{1}, x_{2})\right\}^{1-r}.\nonumber
\end{align}

We claim that for any $f \in L^{2}(X_{1} \times X_{2})$,
$\lambda_{i}>D_{i}/2$, $x_{i}\in X_{i}$, $t_{i}\in [1,2]$, and $j_{i} \in \mathbb{Z}$,
\begin{equation} \label{318}
\big[\Phi_{1}(2^{-j_{1}}t_{1}\sqrt{L_{1}})\otimes \Phi_{2}(2^{-j_{2}}t_{2}\sqrt{L_{2}})
\big]^{\ast}_{\lambda_{1},\lambda_{2}}f(x_{1}, x_{2})<\infty,
\end{equation}
and there exists $N_{0}>0$ such that
\begin{equation} \label{319}
\big[\Phi_{1}(2^{-j_{1}}t_{1}\sqrt{L_{1}})\otimes \Phi_{2}(2^{-j_{2}}t_{2}\sqrt{L_{2}})
\big]^{\ast}_{\lambda_{1},\lambda_{2}}f(x_{1}, x_{2}) =O(2^{j_{1}N_{0}}2^{j_{2}N_{0}})
\end{equation}
as $j_{1}, j_{2} \rightarrow +\infty$. Indeed, for $i=1,2$, by Lemma \ref{smooth} we have
\begin{align*}
\left|K_{\Phi_{i}(2^{-j_{i}}t_{i}\sqrt{L_{i}})}(y_{i},z_{i})\right|
&\leq CV(y_{i},2^{-j_{i}}t_{i})^{-1} (1+2^{j_{i}}t_{i}^{-1}\rho(y_{i},z_{i}))^{-(n_{i}+1)/2}.
\end{align*}
Hence, by the Cauchy-Schwartz inequality and \eqref{inte}, we have
\begin{align*}
&\left|\Phi_{1}(2^{-j_{1}}t_{1}\sqrt{L_{1}})\otimes \Phi_{2}(2^{-j_{2}}t_{2}\sqrt{L_{2}})
f(y_{1}, y_{2})\right|  \\
&\leq C \iint_{X_{1} \times X_{2}} \big| K_{\Phi_{1}(2^{-j_{1}}t_{1}\sqrt{L_{1}})}(y_{1},z_{1}) \big|
 \big| K_{\Phi_{2}(2^{-j_{2}}t_{2}\sqrt{L_{2}})}(y_{2},z_{2}) \big||f(z_{1},z_{2})|d\mu(z_{1})d\mu(z_{2}) \\
& \leq  C\|f\|_{L^{2}(X_{1}\times X_{2})}V(y_{1},2^{-j_{1}}t_{1})^{-1/2}V(y_{2},2^{-j_{2}}t_{1})^{-1/2}.
\end{align*}
This along with \eqref{tos} yields that for $\lambda_{i} \geq D_{i}/2$,
\begin{align*}
& \big[\Phi_{1}(2^{-j_{1}}t_{1}\sqrt{L_{1}})\otimes \Phi_{2}(2^{-j_{2}}t_{2}\sqrt{L_{2}})
\big]^{\ast}_{\lambda_{1},\lambda_{2}}f(x_{1}, x_{2}) \\
& \leq C\esssup_{(y_{1}, y_{2}) \in X_{1} \times X_{2}}\frac{\|f\|_{L^{2}(X_{1}\times X_{2})}}
{\prod_{i=1}^{2} V(y_{i},2^{-j_{i}}t_{i})^{-1/2}
(1+ 2^{j_{i}}t_{i}^{-1}\rho_{i}(x_{i},y_{i}))^{\lambda_{i}}}\\
& \leq C \|f\|_{L^{2}(X_{1}\times X_{2})} V(x_{1},2^{-j_{1}}t_{1})^{-1/2}V(x_{2},2^{-j_{2}}t_{2})^{-1/2}.
\end{align*}
Hence \eqref{318} is true. Moreover, if $j_{1}, j_{2} \geq 1$,
by \eqref{hom} we have
\begin{align*}
& \big[\Phi_{1}(2^{-j_{1}}t_{1}\sqrt{L_{1}})\otimes \Phi_{2}(2^{-j_{2}}t_{2}\sqrt{L_{2}})
\big]^{\ast}_{\lambda_{1},\lambda_{2}}f(x_{1}, x_{2}) \\
& \leq C \|f\|_{L^{2}(X_{1}\times X_{2})} V(x_{1},2^{-j_{1}}t_{1})^{-1/2}V(x_{2},2^{-j_{2}}t_{2})^{-1/2}\\
&  \leq C2^{j_{1}n_{1}/2}2^{j_{2}n_{2}/2}\|f\|_{L^{2}(X_{1}\times X_{2})} V(x_{1},1)^{-1/2}V(x_{2},1)^{-1/2},
\end{align*}
which verifies \eqref{319} with $N_{0} = \max\{n_{1}/2,n_{2}/2\}$.

Since $m_{1}, m_{2}$ in \eqref{317} can be chosen to be arbitrarily large, it follows
from \eqref{317}, \eqref{318}, \eqref{319} and Lemma \ref{lpm} that for any $\sigma >0$,
\begin{align*}
&\left\{\big[\Phi_{1}(2^{-j_{1}}t_{1}\sqrt{L_{1}})\otimes \Phi_{2}(2^{-j_{2}}t_{2}\sqrt{L_{2}})
\big]^{\ast}_{\lambda_{1},\lambda_{2}}f(x_{1}, x_{2})\right\}^{r}\\
& \leq C\sum_{k_{1}=j_{1}}^{\infty}\sum_{k_{2} =j_{2}}^{\infty} 2^{(j_{1}- k_{1})\sigma}
2^{(j_{2}- k_{2})\sigma}\\
&\quad\quad \times \iint_{X_{1}\times X_{2}}\frac{\left|\Phi_{1}(2^{-k_{1}}
 t_{1}\sqrt{L_{1}}) \otimes \Phi_{2}(2^{-k_{2}} t_{2}\sqrt{L_{2}})f(z_{1},z_{2})\right|^{r}  d\mu_{1}(z_{1})d\mu_{2}(z_{2})}
 {V(z_{1},2^{-k_{1}}t_{1})
 (1+2^{k_{1}}t_{1}^{-1}\rho_{1}(x_{1},z_{1}))^{\lambda_{1} r}V(z_{2},2^{-k_{2}}t_{2})(1+2^{k_{2}}t_{2}^{-1}
 \rho_{2}(x_{2},z_{2}))^{\lambda_{2} r}}.
\end{align*}
This proves \eqref{cen} for $0<r \leq  1$.

Next we show \eqref{cen} for $r>1$. Indeed, from \eqref{ftqkf} with
$m_{i} \geq \sigma + \lambda_{i}r + \varepsilon$ and $N_{i} \geq \lambda_{i} + (D_{i}+n_{i}+1)/r'$,
where $\varepsilon$ is any fixed positive number and $r'$ is a number such that $1/r+1/r'=1$, it follows that
\begin{align*}
&\left|\Phi_{1}(2^{-j_{1}} t_{1}\sqrt{L_{1}})\otimes \Phi_{2}(2^{-j_{2}} t_{2}\sqrt{L_{2}})f(y_{1},y_{2})\right|\\
& \leq C\sum_{k_{1}=j_{1}}^{\infty}\sum_{k_{2}=j_{2}}^{\infty} 2^{(j_{1}-k_{1})(\sigma + \lambda_{1}r+
\varepsilon)}2^{(j_{2}-k_{2})(\sigma + \lambda_{2}r + \varepsilon)}\\
&\quad\quad\quad\quad\quad\quad \times \iint_{X_{1} \times X_{2}}  \frac{\left|\Phi_{1}(2^{-k_{1}} t_{1}\sqrt{L_{1}})
\otimes \Phi_{2}(2^{-k_{2}} t_{2}\sqrt{L_{2}})f(z_{1},z_{2})\right|}
{\prod_{i=1}^{2}V(z_{i},2^{-j_{i}}t_{i})(1+2^{j_{i}}t_{i}^{-1}\rho_{i}(y_{i},z_{i}))^{\lambda_{i} + (D_{i}+n_{i}+1)/r'}
} d\mu_{1}(z_{1})d\mu_{2}(z_{2})\\
& \leq C\sum_{k_{1}=j_{1}}^{\infty}\sum_{k_{2}=j_{2}}^{\infty} 2^{(j_{1}-k_{1})
(\sigma +\lambda_{1}r+ \varepsilon)}2^{(j_{2}-k_{2})(\sigma + \lambda_{2}r+\varepsilon)}\\
&\quad\quad\quad\quad\quad\quad \times \left(\iint_{X_{1} \times X_{2}}  \frac{\left|\Phi_{1}(2^{-k_{1}} t_{1} \sqrt{L_{1}})
\otimes \Phi_{2}(2^{-k_{2}} t_{2}\sqrt{L_{2}})f(z_{1},z_{2})\right|^{r}}
{\prod_{i=1}^{2}V(z_{i},2^{-j_{i}}t_{i})(1+2^{j_{i}}t_{i}^{-1}\rho_{i}(y_{i},z_{i}))^{\lambda_{i}r}} d\mu_{1}(z_{1})d\mu_{2}(z_{2})\right)^{1/r}\\
& \leq C\left(\sum_{k_{1}=j_{1}}^{\infty}\sum_{k_{2}=j_{2}}^{\infty} 2^{(j_{1}-k_{1})(\sigma_{1}+\lambda_{1}r)}2^{(j_{2}-k_{2})(\sigma_{2}+\lambda_{2}r)}\right.\\
& \quad\quad\quad\quad\quad\quad
\left. \times \iint_{X_{1} \times X_{2}}  \frac{\left|\Phi_{1}(2^{-k_{1}} t_{1}\sqrt{L_{1}}) \otimes \Phi_{2}(2^{-k_{2}} t_{2}\sqrt{L_{2}}) f(z_{1},z_{2})\right|^{r}}
{\prod_{i=1}^{2}V(z_{i},2^{-j_{i}}t_{i})(1+2^{j_{i}}t_{i}^{-1}\rho_{i}(y_{i},z_{i}))^{\lambda_{i}r}
} d\mu_{1}(z_{1})d\mu_{2}(z_{2})\right)^{1/r},
\end{align*}
where we applied H\"{o}lder's inequality for the integrals and the sums, and used
\eqref{tos} and \eqref{inte}. Raising both sides to the power $r$, dividing both
sides by $(1+ 2^{j_{1}}t_{1}^{-1}\rho_{1}(x_{1},y_{1}))^{\lambda_{1} r}(1+ 2^{j_{2}}t_{2}^{-1}\rho_{2}(x_{2},y_{2}))^{\lambda_{2} r}$,
in the left-hand side taking the supremum over $(y_{1},y_{2}) \in X_{1} \times X_{2}$, and in the right-hand side using the inequalities
\begin{align*}
&(1+ 2^{j_{i}}t_{i}^{-1}\rho_{i}(x_{i},y_{i}))^{\lambda_{i} r}(1+ 2^{j_{i}}t_{i}^{-1} \rho_{i}(y_{i},z_{i}))^{\lambda_{i} r}\\
&\geq (1+ 2^{j_{i}}t_{i}^{-1}\rho_{i}(x_{i},z_{i}))^{\lambda_{i} r} \\
&\geq 2^{(j_{i}-k_{i})\lambda_{i}r}
(1+ 2^{k_{i}}t_{i}^{-1}\rho_{i}(x_{i},z_{i}))^{\lambda_{i} r} \quad (\forall k_{i} \geq j_{i})
\end{align*}
and $V(z_{i}, 2^{-j_{i}}t_{i}) \geq V(z_{i},2^{-k_{i}}t_{i})$ ($\forall k_{i} \geq j_{i}$), we obtain \eqref{cen} for $r >1$.
\end{proof}

\begin{lemma} \label{PEE}
Let $\Phi_{1},\Phi_{2} \in \mathcal{A}(\mathbb{R})$ be even functions.
Let $p \in (0, \infty)$ and $\lambda_{i} > \frac{(n_{i}+D_{i})q_{w}}{\min\{p,2\}}$, $i=1,2$.
Then there exits a constant $C$ such that for all $f \in L^{2}(X_{1} \times X_{2})$,
\begin{align*}
&\left\|\left(\int_{0}^{\infty} \int_{0}^{\infty}\left|\big[\Phi_{1}(t_{1}\sqrt{L_{1}})\otimes \Phi_{2}(t_{2}\sqrt{L_{2}})
\big]^{\ast}_{\lambda_{1},\lambda_{2}}f\right|^{2}\frac{dt_{1}}{t_{1}}\frac{dt_{2}}{t_{2}}
\right)^{1/2}\right\|_{L_{w}^{p}(X_{1} \times X_{2})}\\
&\quad \quad\quad\quad\quad\quad
\leq C \left\|\left(\int_{0}^{\infty} \int_{0}^{\infty}\left|\Phi_{1}(t_{1}\sqrt{L_{1}})\otimes \Phi_{2}(t_{2}\sqrt{L_{2}})
f\right|^{2}\frac{dt_{1}}{t_{1}}\frac{dt_{2}}{t_{2}}
\right)^{1/2}\right\|_{L_{w}^{p}(X_{1} \times X_{2})}.
\end{align*}
\end{lemma}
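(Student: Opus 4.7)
The plan is to combine the sub-mean value inequality of Lemma \ref{central} with the weighted vector-valued maximal inequality of Lemma \ref{maximal}, in the spirit of the classical Peetre-type estimate but adapted to the two-parameter weighted setting. First I would discretize the dilation parameters by writing $t_i=2^{-j_i}s_i$ with $s_i\in[1,2]$ and $j_i\in\mathbb Z$, so that
\[
\int_0^\infty\int_0^\infty|\cdot|^2\frac{dt_1\,dt_2}{t_1t_2}
=\sum_{j_1,j_2\in\mathbb Z}\int_1^2\int_1^2|\cdot|^2\frac{ds_1\,ds_2}{s_1s_2}.
\]
The core task reduces to controlling, for each $(j_1,j_2)$ and $(s_1,s_2)\in[1,2]^2$, the Peetre maximal expression $F_{j_1,j_2}(x_1,x_2;s_1,s_2):=\bigl[\Phi_1(2^{-j_1}s_1\sqrt{L_1})\otimes\Phi_2(2^{-j_2}s_2\sqrt{L_2})\bigr]^{\ast}_{\lambda_1,\lambda_2}f(x_1,x_2)$ by an averaged/maximal version of $G_{k_1,k_2}(x_1,x_2;s_1,s_2):=\Phi_1(2^{-k_1}s_1\sqrt{L_1})\otimes\Phi_2(2^{-k_2}s_2\sqrt{L_2})f(x_1,x_2)$.

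The next step is to choose $r\in(0,\min\{p,2\}/q_w)$ small enough so that also $\lambda_ir>n_i+D_i$ for $i=1,2$. Such an $r$ exists precisely because $\lambda_i>(n_i+D_i)q_w/\min\{p,2\}$. With this $r$, Lemma \ref{central} (applied for some fixed $\sigma>0$ to be used below) gives
\[
F_{j_1,j_2}^{r}\le C\sum_{k_1\ge j_1}\sum_{k_2\ge j_2}2^{(j_1-k_1)\sigma}2^{(j_2-k_2)\sigma}\,\mathcal I_{k_1,k_2}(x_1,x_2;s_1,s_2),
\]
where $\mathcal I_{k_1,k_2}$ is the weighted product integral of $|G_{k_1,k_2}|^r$ appearing on the right-hand side of \eqref{cen}; by the choice $\lambda_ir>n_i+D_i$, estimate \eqref{smfx} bounds $\mathcal I_{k_1,k_2}$ pointwise by $C\,\mathcal M_s(|G_{k_1,k_2}(\cdot,\cdot;s_1,s_2)|^r)(x_1,x_2)$.

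Raising to the power $2/r\ge1$ and using Jensen's inequality against the summable weights $2^{(j_i-k_i)\sigma}$ yields
\[
F_{j_1,j_2}^{2}\le C\sum_{k_1\ge j_1}\sum_{k_2\ge j_2}2^{(j_1-k_1)\sigma}2^{(j_2-k_2)\sigma}\bigl[\mathcal M_s(|G_{k_1,k_2}(\cdot,\cdot;s_1,s_2)|^r)(x_1,x_2)\bigr]^{2/r},
\]
and summing in $(j_1,j_2)$ (swapping the order and using $\sum_{j_i\le k_i}2^{(j_i-k_i)\sigma}\lesssim1$) gives
\[
\sum_{j_1,j_2}F_{j_1,j_2}^{2}\le C\sum_{k_1,k_2\in\mathbb Z}\bigl[\mathcal M_s(|G_{k_1,k_2}(\cdot,\cdot;s_1,s_2)|^r)(x_1,x_2)\bigr]^{2/r}.
\]

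Finally I would integrate in $(s_1,s_2)\in[1,2]^2$ against $ds_1ds_2/(s_1s_2)$, take $L^p_w$-norm, and invoke Lemma \ref{maximal} at exponents $p'=p/r>q_w$ and $q=2/r>1$, viewing $(k_1,k_2,s_1,s_2)$ as the vector-valued index (with counting measure in the $k_i$'s and $ds_i/s_i$ in the $s_i$'s). Since $w\in A_{p/r}(X_1\times X_2)$ by the choice of $r$, this absorbs the strong maximal operator, so that
\[
\Bigl\|\Bigl(\sum_{k_1,k_2}\!\int_{1}^{2}\!\int_{1}^{2}\mathcal M_s(|G_{k_1,k_2}|^r)^{2/r}\tfrac{ds_1ds_2}{s_1s_2}\Bigr)^{1/2}\Bigr\|_{L^{p}_w}
\!\!\lesssim\!
\Bigl\|\Bigl(\sum_{k_1,k_2}\!\int_{1}^{2}\!\int_{1}^{2}|G_{k_1,k_2}|^{2}\tfrac{ds_1ds_2}{s_1s_2}\Bigr)^{1/2}\Bigr\|_{L^{p}_w}\!,
\]
and the right-hand side equals $C\|g_{\Phi_1,\Phi_2,L_1,L_2}(f)\|_{L^p_w}$ after undoing the discretization. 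The main obstacle I anticipate is the juggling of parameters: one must simultaneously arrange $\lambda_ir>n_i+D_i$ (to apply \eqref{smfx}), $p/r>q_w$ (so that $w\in A_{p/r}$ and Lemma \ref{maximal} applies), $2/r\ge1$ (to legitimately use Jensen's inequality in the convolution sum), and extend Lemma \ref{maximal} to accommodate the continuous auxiliary parameters $s_1,s_2\in[1,2]$, which is standard but requires care.
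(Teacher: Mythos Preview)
Your proposal is correct and follows essentially the same route as the paper: pick $r\in(0,\min\{p,2\}/q_w)$ with $\lambda_i r>n_i+D_i$, apply Lemma~\ref{central}, control the resulting spatial integral by $\mathcal{M}_s(|G_{k_1,k_2}|^r)$ via \eqref{smfx}, and conclude with the weighted vector-valued maximal inequality at exponents $p/r>q_w$ and $2/r>1$. The only organizational difference concerns the continuous parameters $s_i\in[1,2]$ that you flagged as an obstacle: you use Jensen's inequality on the $(k_1,k_2)$-sum and then need Lemma~\ref{maximal} with a mixed discrete--continuous index set, whereas the paper first takes the norm $\bigl(\int_1^2\!\int_1^2|\cdot|^{2/r}\tfrac{ds_1\,ds_2}{s_1s_2}\bigr)^{r/2}$ and pushes it through the sum and the spatial integral by Minkowski's inequality, so that $\mathcal{M}_s$ acts on functions independent of $s_1,s_2$ and Lemma~\ref{maximal} can be invoked exactly as stated (with Lemma~\ref{RY} handling the discrete convolution in $(j_1,j_2)$). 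Both arrangements are valid; the paper's simply sidesteps the issue you anticipated.
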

\begin{proof}
Since $\lambda_{i} > \frac{(n_{i}+D_{i})q_{w}}{\min\{p,2\}}$, there exists a number $r$ such
that $0< r < \frac{\min\{p,2\}}{q_{w}}$ and $\lambda_{i} r > n_{i}+D_{i}$.
From Lemma \ref{central} we see that for any $\sigma>0$
there exists a constant $C$ such that
for all $f \in L^{2}(X_{1} \times X_{2})$, $j_{i} \in \mathbb{Z}$, $x_{i}\in X_{i}$ and  $t_{i} \in [1,2]$,
\begin{align*}
&\left\{\big[\Phi_{1}(2^{-j_{1}}t_{1}\sqrt{L_{1}|})\otimes \Phi_{2}(2^{-j_{2}}t_{2}\sqrt{L_{2}})
\big]^{\ast}_{\lambda_{1},\lambda_{2}}f(x_{1}, x_{2})\right\}^{r} \\
&\leq C\sum_{k_{1}= j_{1}}^{\infty}\sum_{k_{2}=j_{2}}^{\infty} 2^{(j_{1}-k_{1})\sigma}2^{(j_{2}- k_{2})\sigma} \\
&\quad\quad \times \iint_{X_{1} \times X_{2}}  \frac{\left|\Phi_{1}(2^{-k_{1}} t_{1}\sqrt{L_{1}}) \otimes \Phi_{2}(2^{-k_{2}}
 t_{2}\sqrt{L_{2}}) f(z_{1},z_{2})\right|^{r}d\mu_{1}(z_{1})d\mu_{2}(z_{2})}
{V(z_{1},2^{- k_{1}}t_{1})(1+2^{k_{1}}t_{1}^{-1}\rho_{1}(x_{1},z_{1}))^{\lambda_{1}r}V(z_{2},2^{-k_{2}}t_{2})
(1+2^{k_{2}}t_{2}^{-1}\rho_{2}(x_{2},z_{2}))
^{\lambda_{2}r}}.
\end{align*}
Taking the norm $\big(\int_{1}^{2}\int_{1}^{2}|\cdot|^{2/r}\frac{dt_{1}}{t_{1}}\frac{dt_{2}}{t_{2}}\big)^{r/2}$ on
both sides, applying Minkowski's inequality, and then using \eqref{smfx},  we get
\begin{align*}
& \left(\int_{1}^{2}\int_{1}^{2}\left|\big[\Phi_{1}(2^{-j_{1}}t_{1}\sqrt{L_{1}})\otimes \Phi_{2}(2^{-j_{2}}t_{2}\sqrt{L_{2}})
\big]^{\ast}_{\lambda_{1},\lambda_{2}}f(x_{1},x_{2})\right|^{2}\frac{dt_{1}}{t_{1}}\frac{dt_{2}}{t_{2}}\right)^{r/2} \\
&\leq  C\sum_{k_{1}= j_{1}}^{\infty}\sum_{k_{2}= j_{2}}^{\infty} 2^{(j_{1}-k_{1})\sigma}2^{(j_{2}-k_{2})\sigma}\\
&\quad\quad \times \iint_{X_{1} \times X_{2}}  \frac{\left(\int_{1}^{2}\int_{1}^{2}\left|\Phi_{1}(2^{-k_{1}} t_{1}\sqrt{L_{1}})
\otimes \Phi_{2}(2^{-k_{2}} t_{2}\sqrt{L_{2}})f(z_{1},z_{2})\right|^{2}\frac{dt_{1}}{t_{1}}\frac{dt_{2}}{t_{2}}\right)^{r/2}d\mu_{1}(z_{1})d\mu_{2}(z_{2})}
{V(z_{1},2^{-k_{1}}t_{1})(1+2^{k_{1}}t_{1}^{-1}\rho_{1}(x_{1},z_{1}))^{\lambda_{1}r}V(z_{2},2^{-k_{2}}t_{2})(1+2^{k_{2}}t_{2}^{-1}\rho_{2}(x_{2},z_{2}))
^{\lambda_{2}r}} \\
&\leq  C\sum_{k_{1}= j_{1}}^{\infty}\sum_{k_{2}=j_{2}}^{\infty}
 2^{(j_{1}-k_{1})\sigma}2^{(j_{2}-k_{2})\sigma} \\
&\quad\quad \times \mathcal{M}_{s}\left[\left(\int_{1}^{2}\int_{1}^{2}\left|\Phi_{1}(2^{-k_{1}} t_{1}
\sqrt{L_{1}}) \otimes \Phi_{2}(2^{-k_{2}} t_{2}\sqrt{L_{2}}) f\right|^{2}\frac{dt_{1}}{t_{1}}\frac{dt_{2}}{t_{2}}\right)^{r/2}\right](x_{1},x_{2})\\
&\leq  C\sum_{k_{1}=-\infty}^{\infty}\sum_{k_{2}=-\infty}^{\infty} 2^{-|k_{1}-j_{1}|\sigma}2^{-|k_{2}-j_{2}|\sigma} \\
&\quad\quad \times
\mathcal{M}_{s}\left[\left(\int_{1}^{2}\int_{1}^{2}\left|\Phi_{1}(2^{-k_{1}} t_{1}
\sqrt{L_{1}}) \otimes \Phi_{2}(2^{-k_{2}} t_{2}\sqrt{L_{2}}) f\right|^{2}
\frac{dt_{1}}{t_{1}}\frac{dt_{2}}{t_{2}}\right)^{r/2}\right](x_{1},x_{2}).
\end{align*}
It then follows from Lemma \ref{RY} and Lemma \ref{maximal} that
\begin{align*}
&\left\|\left(\int_{0}^{\infty}\int_{0}^{\infty}\left|\big[\Phi_{1}(t_{1}\sqrt{L_{1}})\otimes \Phi_{2}(t_{2}\sqrt{L_{2}})
\big]^{\ast}_{\lambda_{1},\lambda_{2}}f\right|^{2}\frac{dt_{1}}{t_{1}}\frac{dt_{2}}{t_{2}}
\right)^{1/2}\right\|_{L_{w}^{p}(X_{1} \times X_{2})} \\
&  =\left\| \left\{\left(\int_{1}^{2}\int_{1}^{2}\left|\big[\Phi_{1}(2^{-j_{1}}t_{1}\sqrt{L_{1}})\otimes \Phi_{2}(2^{-j_{2}}
t_{2}\sqrt{L_{2}})\big]^{\ast}_{\lambda_{1},\lambda_{2}}f\right|^{2}\frac{dt_{1}}{t_{1}}\frac{dt_{2}}{t_{2}}\right)^{r/2}
\right\}_{j_{1},j_{2} \in \mathbb{Z}} \right\|_{L_{w}^{p/r}(\ell^{2/r})}^{1/r} \\
& \leq C \left\| \left\{\mathcal{M}_{s}\left[\left(\int_{1}^{2}\int_{1}^{2}\left|\Phi_{1}(2^{-j_{1}} t_{1}\sqrt{L_{1}})
\otimes \Phi_{2}(2^{-j_{2}} t_{2}\sqrt{L_{2}})f\right|^{2}\frac{dt_{1}}{t_{1}}\frac{dt_{2}}{t_{2}}
\right)^{r/2}\right]\right\}_{j_{1}, j_{2} \in \mathbb{Z}} \right\|_{L_{w}^{p/r}(\ell^{2/r})}^{1/r}\\
& \leq C \left\| \left\{\left(\int_{1}^{2}\int_{1}^{2}\left|\Phi_{1}(2^{-j_{1}} t_{1}\sqrt{L_{1}}) \otimes
\Phi_{2}(2^{-j_{2}} t_{2}\sqrt{L_{2}}) f\right|^{2}\frac{dt_{1}}{t_{1}}\frac{dt_{2}}{t_{2}}\right)^{r/2}\right\}_{j_{1},j_{2} \in \mathbb{Z}} \right\|_{L_{w}^{p/r}(\ell^{2/r})}^{1/r}\\
& = C \left\| \left(\int_{0}^{\infty}\int_{0}^{\infty}\left|\Phi_{1}( t_{1}\sqrt{L_{1}}) \otimes \Phi_{2}(t_{2}\sqrt{L_{2}}) f\right|^{2}\frac{dt_{1}}{t_{1}}\frac{dt_{2}}{t_{2}}\right)^{1/2}\right\|_{L_{w}^{p}(X_{1}\times X_{2})},
\end{align*}
where we used the fact that $p/r>q_{w}$ (which implies $w \in A_{p/r}(X_{1} \times X_{2})$) and $2/r>1$.
\end{proof}

\begin{lemma} \label{lkjh}
Let $\Phi_{1},\Phi_{2} \in \mathcal{A}(\mathbb{R})$ be even functions.
Let $p \in (0, \infty)$ and $\lambda_{i}>0$, $i=1,2$. Let $w$ be arbitrary weight (i.e., non-negative locally integrable function)
on $X_{1} \times X_{2}$. Then there exists a constant $C$ such that for all $f \in L^{2}(X_{1} \times X_{2})$,
\begin{align*}
&\left\|\left(\int_{0}^{\infty} \int_{0}^{\infty}\left|\big[\Phi_{1}(t_{1}^{2}\sqrt{L_{1}})\otimes \Phi_{2}(t_{2}\sqrt{L_{2}})
\big]^{\ast}_{\lambda_{1}+D_{1}/2,\lambda_{2}+D_{2}/2}f\right|^{2}\frac{dt_{1}}{t_{1}}\frac{dt_{2}}{t_{2}}
\right)^{1/2}\right\|_{L_{w}^{p}(X_{1} \times X_{2})} \\
&\leq C\left\|g^{\ast}_{\Phi_{1},\Phi_{2}, L_{1},L_{2},(2/n_{1})\lambda_{1}, (2/n_{2})\lambda_{2}}(f)\right\|_{L_{w}^{p}(X_{1}\times X_{2})}.
\end{align*}
\end{lemma}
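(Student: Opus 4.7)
The plan is to reduce the inequality to a \emph{pointwise} bound by combining Lemma~\ref{central} (with $r=2$) and the volume doubling estimate \eqref{tos}; no maximal inequality is required, which is why the conclusion holds for an arbitrary weight $w$ and every $p\in(0,\infty)$.

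First I apply Lemma~\ref{central} with $r=2$, any fixed $\sigma > 0$, and the Peetre indices $\lambda_i + D_i/2$ in the role of the $\lambda_i$ appearing in that lemma. The hypothesis $\lambda_i + D_i/2 > D_i/2$ is equivalent to $\lambda_i > 0$, which holds. For every $(x_1,x_2)$, every $t_1,t_2 \in [1,2]$, and every $j_1,j_2 \in \mathbb{Z}$, this gives
\begin{align*}
&\bigl\{\bigl[\Phi_1(2^{-j_1}t_1\sqrt{L_1})\otimes\Phi_2(2^{-j_2}t_2\sqrt{L_2})\bigr]^\ast_{\lambda_1+D_1/2,\lambda_2+D_2/2}f(x_1,x_2)\bigr\}^2 \\
&\qquad\leq C\sum_{k_1=j_1}^{\infty}\sum_{k_2=j_2}^{\infty} 2^{(j_1-k_1)\sigma}2^{(j_2-k_2)\sigma}\, I_{k_1,k_2,t_1,t_2}(x_1,x_2),
\end{align*}
where
\[
I_{k_1,k_2,t_1,t_2}(x_1,x_2) := \iint_{X_1\times X_2}\frac{\bigl|\Phi_1(2^{-k_1}t_1\sqrt{L_1})\otimes\Phi_2(2^{-k_2}t_2\sqrt{L_2})f(z_1,z_2)\bigr|^2\,d\mu_1(z_1)\,d\mu_2(z_2)}{\prod_{i=1}^{2}V(z_i,2^{-k_i}t_i)\bigl(1+2^{k_i}t_i^{-1}\rho_i(x_i,z_i)\bigr)^{2\lambda_i+D_i}}.
\]

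Next I shift the volume factor from $z_i$ to $x_i$ by \eqref{tos}: $V(z_i,2^{-k_i}t_i)^{-1} \leq C\,V(x_i,2^{-k_i}t_i)^{-1}(1+2^{k_i}t_i^{-1}\rho_i(x_i,z_i))^{D_i}$. The gained factors $(1+\cdot)^{D_i}$ absorb the extra $D_i$ in the denominator exponents, reducing them to $2\lambda_i = n_i\cdot(2/n_i)\lambda_i$, which is precisely the power appearing in the integrand that defines $g^\ast_{\Phi_1,\Phi_2,L_1,L_2,(2/n_1)\lambda_1,(2/n_2)\lambda_2}(f)$ at scales $(2^{-k_1}t_1,2^{-k_2}t_2)$.

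Finally I integrate the resulting estimate over $t_1,t_2 \in [1,2]$ against $dt_1\,dt_2/(t_1t_2)$ and sum over $j_1,j_2 \in \mathbb{Z}$. On the left, the substitution $u_i = 2^{-j_i}t_i$ tiles $(0,\infty)$ and produces the square of the norm in the statement. On the right, Fubini gives $\sum_{j_i\in\mathbb Z}\sum_{k_i\geq j_i}2^{(j_i-k_i)\sigma}(\cdot) = \sum_{k_i\in\mathbb Z}(\cdot)\sum_{j_i\leq k_i}2^{(j_i-k_i)\sigma}$, and the inner geometric sums are bounded by $(1-2^{-\sigma})^{-1}$. The substitution $s_i = 2^{-k_i}t_i$ then identifies the remaining sum-integral as $\bigl|g^\ast_{\Phi_1,\Phi_2,L_1,L_2,(2/n_1)\lambda_1,(2/n_2)\lambda_2}(f)(x_1,x_2)\bigr|^2$. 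Taking square roots gives the pointwise bound
\[
\left(\int_0^\infty\!\!\int_0^\infty\!\bigl|\bigl[\Phi_1(t_1\sqrt{L_1})\otimes\Phi_2(t_2\sqrt{L_2})\bigr]^\ast_{\lambda_1+D_1/2,\lambda_2+D_2/2}f\bigr|^2\frac{dt_1\,dt_2}{t_1t_2}\right)^{\!1/2} \leq C\,g^\ast_{\Phi_1,\Phi_2,L_1,L_2,(2/n_1)\lambda_1,(2/n_2)\lambda_2}(f),
\]
from which the $L^p_w$-inequality follows immediately.

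The only real point is algebraic calibration: choosing $r=2$ in Lemma~\ref{central} matches the $L^2(dt_1\,dt_2/(t_1t_2))$ structure of the target, and the extra $D_i/2$ in the Peetre index is precisely the slack needed to transfer the volume factor from $z_i$ to $x_i$ via \eqref{tos}. Once this calibration is seen the proof is pure bookkeeping with Fubini and geometric sums; no vector-valued maximal inequality (cf.\ Lemma~\ref{maximal}) is required.
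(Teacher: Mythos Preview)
Your proof is correct and follows the same core strategy as the paper: apply Lemma~\ref{central} with $r=2$ and Peetre indices $\lambda_i+D_i/2$, then invoke \eqref{tos} to transfer the volume factor from $z_i$ to $x_i$, which reduces the denominator exponents to exactly $2\lambda_i = n_i\cdot(2/n_i)\lambda_i$.

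The only difference lies in the final step. The paper, after integrating in $t_1,t_2\in[1,2]$, appeals to Lemma~\ref{RY} in the space $L_w^{p/2}(\ell^1)$ to pass from the convolution estimate to the $L^p_w$-norm inequality. You instead sum over $j_1,j_2$ directly, interchange with the $k_1,k_2$ sums by Fubini, and bound the resulting geometric series; this yields a \emph{pointwise} inequality (independent of $p$ and $w$), from which the $L^p_w$-bound is immediate. Your route is a slight simplification: in the $\ell^1$ case Lemma~\ref{RY} is nothing more than the Fubini/geometric-series argument you wrote out, so the two proofs are in substance the same, but your version makes explicit that no weighted or vector-valued machinery is needed here and that the underlying estimate is genuinely pointwise.
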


\begin{proof}
Let $\sigma> 0$.
By Lemma \ref{central} with $r =2$, we see that
there exists a constant $C$ such that for all $f \in L^{2}(X_{1} \times X_{2})$,
$j_{i} \in \mathbb{Z}$ and $t_{i} \in [1,2]$,
\begin{align} \label{098}
 &\left\{\big[\Phi_{1}(2^{- j_{1}}t_{1}\sqrt{L_{1}})\otimes \Phi_{2}(2^{-j_{2}}t_{2}\sqrt{L_{2}})
\big]^{\ast}_{\lambda_{1}+D_{1}/2,\lambda_{2}+D_{2}/2}f(x_{1}, x_{2}) \right\}^{2}\\
&\leq C \sum_{k_{1}=j_{1} }^{\infty}\sum_{k_{2}= j_{2}}^{\infty}2^{(j_{1}-k_{1})\sigma}2^{(j_{2}-k_{2})\sigma}  \nonumber\\
& \quad\quad \times \iint_{X_{1}\times X_{2}}
\frac{\left|\Phi_{1}(2^{-k_{1}}t_{1}\sqrt{L_{1}})\otimes \Phi_{2}(2^{-k_{2}}t_{2}\sqrt{L_{2}})
f(z_{1},z_{2})\right|^{2}}{\prod_{i=1}^{2}V(z_{i},2^{-k_{i}}t_{i})(1 + 2^{k_{i}}t_{i}^{-1}\rho_{i}(x_{i},z_{i}))^{2\lambda_{i} +D_{i}}
}d\mu_{1}(z_{1})d\mu_{2}(z_{2})\nonumber\\
&\leq C \sum_{k_{1} =-\infty}^{\infty} \sum_{k_{2} =-\infty}^{\infty}2^{-|k_{1}- j_{1}|\sigma}2^{-|k_{2}-j_{2}|\sigma}\nonumber \\
& \quad\quad \times \iint_{X_{1}\times X_{2}}
\frac{\left|\Phi_{1}(2^{-k_{1}}t_{1}\sqrt{L_{1}})\otimes \Phi_{2}(2^{-k_{2}}t_{2}
\sqrt{L_{2}})f(z_{1},z_{2})\right|^{2}}{\prod_{i=1}^{2}(1 + 2^{k_{i}}t_{i}^{-1}\rho(x_{i},z_{i}))^{2\lambda_{i}}}
\frac{d\mu_{1}(z_{1})}{V(x_{1},2^{-k_{1}}t_{1})}
\frac{d\mu_{2}(z_{2})}{V(x_{2},2^{-k_{2}}t_{2})}, \nonumber
\end{align}
where for the last line we used \eqref{tos}.
Taking the norm $\int_{1}^{2}\int_{1}^{2} |\cdot| \frac{dt_{1}}{t_{1}}\frac{dt_{2}}{t_{2}}$ on both sides of \eqref{098} gives
\begin{align*}
&\int_{1}^{2}\int_{1}^{2} \left\{\big[\Phi_{1}(2^{-2j_{1}}t_{1}\sqrt{L_{1}})\otimes \Phi_{2}(2^{-j_{2}}t_{2}\sqrt{L_{2}})
\big]^{\ast}_{\lambda_{1}+D_{1}/2,\lambda_{2}+D_{2}/2}f(x_{1}, x_{2}) \right\}^{2}\frac{dt_{1}}{t_{1}}\frac{dt_{2}}{t_{2}} \\
&\leq C \sum_{k_{1} =-\infty}^{\infty}\sum_{k_{2} =-\infty}^{\infty}2^{-|k_{1}- j_{1}|\sigma}2^{-|k_{2}- j_{2}|\sigma} \\
& \quad \times \int_{1}^{2}\!\int_{1}^{2}\!\int_{X_{1}}\!\int_{X_{2}}
\frac{\left|\Phi_{1}(2^{-k_{1}}t_{1}\sqrt{L_{1}})\otimes \Phi_{2}(2^{-k_{2}}t_{2}\sqrt{L_{2}})
f(z_{1},z_{2})\right|^{2}}{(1 + 2^{k_{1}}t_{1}^{-1}\rho(x_{1},z_{1}))^{2\lambda_{1}}
(1 + 2^{k_{2}}t_{2}^{-1}\rho_{2}(x_{2},z_{2}))^{2\lambda_{2}}}\frac{d\mu_{1}(z_{1})dt_{1}}{V(x_{1},2^{-k_{1}}t_{1})t_{1}}
\frac{d\mu_{2}(z_{2})dt_{2}}{V(x_{2},2^{-k_{2}}t_{2})t_{2}}.
\end{align*}
Applying Lemma \ref{RY} in $L_{w}^{p/2}(\ell^{1})$ we obtain
\begin{align*}
&\left\|\left(\int_{0}^{\infty} \int_{0}^{\infty}\left|\big[\Phi_{1}(t_{1}\sqrt{L_{1}})\otimes \Phi_{2}(t_{2}\sqrt{L_{2}})
\big]^{\ast}_{\lambda_{1}+D_{1}/2,\lambda_{2}+D_{2}/2}f\right|^{2}\frac{dt_{1}}{t_{1}}\frac{dt_{2}}{t_{2}}
\right)^{1/2}\right\|_{L_{w}^{p}(X_{1} \times X_{2})} \\
&=\left\|\left\{\int_{1}^{2}\int_{1}^{2} \left\{\big[\Phi_{1}(2^{-j_{1}}t_{1}\sqrt{L_{1}})\otimes
\Phi_{2}(2^{-j_{2}}t_{2}\sqrt{L_{2}})
\big]^{\ast}_{\lambda_{1}+D_{1}/2,\lambda_{2}+D_{2}/2}f(x_{1}, x_{2}) \right\}^{2}\frac{dt_{1}}{t_{1}}\frac{dt_{2}}
{t_{2}}\right\}_{j_{1},j_{2}\in \mathbb{Z}}
\right\|_{L_{w}^{p/2}(\ell^{1})}^{1/2}\\
&\leq C \left\|\left\{\int_{1}^{2}\int_{1}^{2}\int_{X}\int_{X}
\frac{\left|\Phi_{1}(2^{-k_{1}}t_{1}\sqrt{L_{1}})\otimes \Phi_{2}(2^{-k_{2}}t_{2}\sqrt{L_{2}})
f(z_{1},z_{2})\right|^{2}}{(1 + 2^{k_{1}}t_{1}^{-1}\rho(x_{1},z_{1}))^{2\lambda_{1}}
(1 + 2^{k_{2}}t_{2}^{-2}\rho_{2}(x_{2},z_{2}))^{2\lambda_{2}}} \right.\right.\\
& \quad\quad\quad\quad\quad\quad\quad\quad\quad\quad\quad\quad\quad\quad\quad
\times \left.\left.\frac{d\mu_{1}(z_{1})dt_{1}}{V(x_{1},2^{-k_{1}}t_{1})t_{1}}
 \frac{d\mu_{2}(z_{2})dt_{2}}{V(x_{2},2^{-k_{2}}t_{2})t_{2}}\right\}
_{k_{1},k_{2}\in \mathbb{Z}}\right\|_{L_{w}^{p/2}(\ell^{1})}^{1/2} \\
& = C\left\|g^{\ast}_{\Phi_{1},\Phi_{2},L_{1},L_{2}, (2/n_{1})\lambda_{1}, (2/n_{2})\lambda_{2}}(f)\right\|_{L_{w}^{p}(X_{1}\times X_{2})},
\end{align*}
as desired.
\end{proof}

Having the above lemmas, we are ready to give the proofs of Theorems \ref{main1} and \ref{main2}.

\begin{proof}[Proof of Theorem \ref{main1}]
Let $\Phi_{1}, \Phi_{2}, \widetilde{\Phi}_{1}, \widetilde{\Phi}_{2} \in \mathcal{A}(\mathbb{R})$ be even functions satisfying
\begin{equation*}
\Phi_{1}(0) = \Phi_{2}(0) =\widetilde{\Phi}_{1}(0) = \widetilde{\Phi}_{2}(0)= 0.
\end{equation*}
Let $p \in (0,\infty)$ and $\lambda_{i} > \frac{(n_{i}+D_{i})q_{w}}{\min\{p,2\}}$, $i=1,2$.
Note that for a.e. $(x_{1}, x_{2}) \in X_{1} \times X_{2}$,
\begin{equation} \label{haji}
\widetilde{\Phi}_{1}(t_{1}\sqrt{L_{1}})\otimes \widetilde{\Phi}_{2}(t_{2}\sqrt{L_{2}})f(x_{1},x_{2}) \leq
\big[\widetilde{\Phi}_{1}(t_{1}\sqrt{L_{1}})\otimes \widetilde{\Phi}_{2}(t_{2}\sqrt{L_{2}})
\big]^{\ast}_{\lambda_{1},\lambda_{2}}f(x_{1},x_{2}).
\end{equation}
Using \eqref{haji}, Lemma \ref{211} and Lemma \ref{PEE}, we infer
\begin{align*}
&\|g_{\widetilde{\Phi}_{1},\widetilde{\Phi}_{2},L_{1},L_{2}}(f)\|_{L_{w}^{p}(X_{1}\times X_{2})} \\
& =\left\|\left(\int_{0}^{\infty} \int_{0}^{\infty}\left|\widetilde{\Phi}_{1}(t_{1}\sqrt{L_{1}})\otimes \widetilde{\Phi}_{2}(t_{2}\sqrt{L_{2}})
f\right|^{2}\frac{dt_{1}}{t_{1}}\frac{dt_{2}}{t_{2}}
\right)^{1/2}\right\|_{L_{w}^{p}(X_{1} \times X_{2})} \\
&\leq \left\|\left(\int_{0}^{\infty} \int_{0}^{\infty}\left|\big[\widetilde{\Phi}_{1}(t_{1}\sqrt{L_{1}})\otimes \widetilde{\Phi}_{2}(t_{2}\sqrt{L_{2}})
\big]^{\ast}_{\lambda_{1},\lambda_{2}}f\right|^{2}\frac{dt_{1}}{t_{1}}\frac{dt_{2}}{t_{2}}
\right)^{1/2}\right\|_{L_{w}^{p}(X_{1} \times X_{2})} \\
& \leq C\left\|\left(\int_{0}^{\infty} \int_{0}^{\infty}\left|\big[\Phi_{1}(t_{1}\sqrt{L_{1}})\otimes \Phi_{2}(t_{2}\sqrt{L_{2}})
\big]^{\ast}_{\lambda_{1},\lambda_{2}}f\right|^{2}\frac{dt_{1}}{t_{1}}\frac{dt_{2}}{t_{2}}
\right)^{1/2}\right\|_{L_{w}^{p}(X_{1} \times X_{2})} \\
&\leq C\left\|\left(\int_{0}^{\infty} \int_{0}^{\infty}\left|\Phi_{1}(t_{1}\sqrt{L_{1}})\otimes \Phi_{2}(t_{2}\sqrt{L_{2}})
f\right|^{2}\frac{dt_{1}}{t_{1}}\frac{dt_{2}}{t_{2}}
\right)^{1/2}\right\|_{L_{w}^{p}(X_{1} \times X_{2})} \\
& = C\|g_{\Phi_{1},\Phi_{2},L_{1},L_{2}}(f)\|_{L_{w}^{p}(X_{1}\times X_{2})}.
\end{align*}
By symmetry, there also holds $\|g_{\Phi_{1},\Phi_{2},L_{1},L_{2}}(f)\|_{L_{w}^{p}(X_{1}\times X_{2}) }
\leq C \|g_{\Psi_{1},\Psi_{2},L_{1},L_{2}}(f)\|_{L_w{}^{p}(X_{1}\times X_{2})}$. Hence the assertion of Theorem \ref{main1} is true.
\end{proof}

\begin{proof}[Proof of Theorem \ref{main2}]
Let $\Phi_{1}, \Phi_{2} \in \mathcal{A}(\mathbb{R})$ be even functions.
Let $p \in (0, \infty)$, $\lambda_{i} >\frac{2q_{w}}{\min\{p,2\}}$
and $\lambda_{i}' > \frac{(n_{i}+D_{i})q_{w}}{\min\{p,2\}}$, $i=1,2$.
Then, for all $f\in L^{2}(X_{1} \times X_{2})$,
by \eqref{haji}, Lemma \ref{lkjh}, Lemma \ref{lkjhh}, Lemma \ref{8787} and Lemma \ref{PEE}, we have
\begin{align*}
&\|g_{\Phi_{1},\Phi_{2},L_{1},L_{2}}(f)\|_{L_{w}^{p}(X_{1}\times X_{2})} \\
& \leq \left\|\left(\int_{0}^{\infty} \int_{0}^{\infty}\left|\big[\Phi_{1}(t_{1}
\sqrt{L_{1}})\otimes \Phi_{2}(t_{2}\sqrt{L_{2}})\big]^{\ast}_{(n_{1}/2)\lambda_{1}+D_{1}/2,(n_{2}/2)\lambda_{2}+D_{2}/2}f\right|^{2}
\frac{dt_{1}}{t_{1}}\frac{dt_{2}}{t_{2}} \right)^{1/2}\right\|_{L_{w}^{p}(X_{1} \times X_{2})} \\
& \leq C\left\|g^{\ast}_{\Phi_{1},\Phi_{2}, L_{1},L_{2},\lambda_{1}, \lambda_{2}}(f)\right\|_{L_{w}^{p}(X_{1}\times X_{2})}\\
&\leq C\big\|S_{\Phi_{1},\Phi_{2},L_{1},L_{2}}(f)\big\|_{L_{w}^{p}(X_{1}\times X_{2})}\\
& \leq C\left\|\left(\int_{0}^{\infty} \int_{0}^{\infty}\left|\big[\Phi_{1}(t_{1}\sqrt{L_{1}})\otimes \Phi_{2}(t_{2}
\sqrt{L_{2}})\big]^{\ast}_{\lambda_{1}',\lambda_{2}'}f\right|^{2}\frac{dt_{1}}{t_{1}}\frac{dt_{2}}{t_{2}}
\right)^{1/2}\right\|_{L_{w}^{p}(X_{1} \times X_{2})} \\
& \leq C\|g_{\Phi_{1},\Phi_{2},L_{1},L_{2}}(f)\|_{L_{w}^{p}(X_{1}\times X_{2})},
\end{align*}
which yields \eqref{man}. The proof of Theorem \ref{main2} is complete.
\end{proof}

\section{Applications of Theorems \ref{main1} and \ref{main2}}

1. In  \cite{DSTY} and \cite{DLY}, the theory of product Hardy space $H^1_{L_1,L_2}(\mathbb R^n\times \mathbb R^m)$ via the Littlewood--Paley area functions
were established, where $L_1$ and $L_2$ are two non-negative self-adjoint operators that satisfy only the Gaussian heat kernel bound. To be more specific,
$H^1_{L_1,L_2}(\mathbb R^n\times \mathbb R^m)$ is defined as the closure of
$$ \{ f\in L^2(\mathbb R^n\times \mathbb R^m): S_{L_1,L_2}(f)\in L^1(\mathbb R^n\times \mathbb R^m) \} $$
under the norm
$ \|f\|_{H^1_{L_1,L_2}(\mathbb R^n\times \mathbb R^m)}:=  \|S_{L_1,L_2}(f)\|_{H^1_{L_1,L_2}(\mathbb R^n\times \mathbb R^m)}$, where
$$S_{L_1,L_2}(f)(x_1,x_2)=  \left(\iint_{\Gamma_{1}(x_{1}) \times \Gamma_{2}(x_{2})}\left| ( t_{1}^{2}L_{1}e^{-t_{1}^{2}L_{1}}) \otimes
(t_{2}^{2}L_{2}e^{-t_{2}^{2}L_{2}})f(y_{1},y_{2})  \right|^{2}  \frac{dy_{1}dt_{1}}{t_{1}^{n+1}} \frac{dy_{2}dt_{2}}{t_{2}^{m+1}} \right)^{1/2}.$$
Then, by applying our main result Theorem \ref{main2} (also Remark \ref{remark}), we obtain the characterization of $H^1_{L_1,L_2}(\mathbb R^n\times \mathbb R^m)$ via the Littlewood--Paley square function as follows, which is missing in \cite{DSTY} and \cite{DLY}, i.e., $H^1_{L_1,L_2}(\mathbb R^n\times \mathbb R^m)$ is equivalent to the closure of
$$ \left\{ f\in L^2(\mathbb R^n\times \mathbb R^m): g_{L_1,L_2}(f)\in L^1(\mathbb R^n\times \mathbb R^m) \right\} $$
under the norm
$  \|g_{L_1,L_2}(f)\|_{H^1_{L_1,L_2}(\mathbb R^n\times \mathbb R^m)}$, where
$$g_{L_1,L_2}(f)(x_1,x_2)=  \left(\int_0^\infty\int_0^\infty \left| ( t_{1}^{2}L_{1}e^{-t_{1}^{2}L_{1}}) \otimes
(t_{2}^{2}L_{2}e^{-t_{2}^{2}L_{2}})f(x_{1},x_{2})  \right|^{2}  \frac{dt_{1}}{t_{1}} \frac{dt_{2}}{t_{2}} \right)^{1/2}.$$

2. In 1965, Muckenhoupt and Stein in \cite{ms} introduced a notion of conjugacy associated with the Bessel operator $\tbz$ on $\mathbb R_+:=(0,\infty)$
defined by
\begin{equation*}%\label{bessel1}
\tbz f(x):=-\frac{d^2}{dx^2}f(x)-\frac{2\lz}{x}\frac{d}{dx}f(x),\quad x>0,
\end{equation*}
and the  Bessel Schr\"odinger operator $S_\lz$ on $\mathbb R_+$
\begin{equation*}%}\label{bessel 2}
S_\lambda f(x):=-\frac{d^2}{dx^2}f(x)+ \frac{\lz^2-\lz}{x^2}f(x),\quad x>0.
\end{equation*}
In \cite{DLWY}, Duong {\it et al.} established the product Hardy space $H^p_{\tbz}(\mathbb R_+\times \mathbb R_+)$ associated with $\tbz$ via the
Littlewood--Paley area function and square functions. Note that the measure on $\mathbb R_+$ related to $\tbz$ is $d\mu_\lz(x)= x^{2\lz}dx$.
We point out that
the kernel of $t^{2}\tbz e^{-t^{2}\tbz}$ satisfies the Gaussian upper bounds with respect to the measure $d\mu_\lz$, the H\"older regularity and the cancellation property. Hence, by using the approach in \cite{HLW} via the Plancherel--Polya type inequality, they obtained the equivalence of the
 characterizations of $H^p_{\tbz}(\mathbb R_+\times \mathbb R_+)$  via Littlewood--Paley area function function and square functions.
 By applying our main result Theorem \ref{main2} (also Remark \ref{remark}), we obtain a direct proof of the equivalence without using the H\"older regularity and the cancellation property.

In \cite{BDLWY}, Betancor {\it et al.} established the product  Hardy space $H^p_{S_\lz}(\mathbb R_+\times \mathbb R_+)$ associated with $\tbz$ via the
Littlewood--Paley area function and square functions. To prove the equivalence, they need to use the Poisson semigroup $\{e^{-t\sqrt{S_\lz}}\}$, the subordination formula and the Moser type inequality as a bridge. By applying our main result Theorem \ref{main2} (also Remark \ref{remark}), we obtain a direct proof of this equivalence without using the Moser type inequality.

\bigskip
{\bf Acknowledgements}: X.T. Duong and J. Li are supported by DP 160100153. G. Hu is supported by Tianyuan Fund for Mathematics
of China, No. 11626122.
\bibliographystyle{amsplain}

\end{document}